\newtheorem{thm}{Theorem}[section]
\newtheorem{lemma}[thm]{Lemma}
\newtheorem{prop}[thm]{Proposition}
\newtheorem{cor}[thm]{Corollary}
\theoremstyle{definition}
\newtheorem{defi}[thm]{Definition}
\newtheorem{rmk}[thm]{Remark}
\DeclareMathOperator{\dist}{\rm {dist }}
\setlist[itemize]{labelindent=.6em, itemindent=1em, leftmargin=!, label=\textbullet}
  \newcommand{\TODO}[1]{\@ifmtarg{#1}{\emph{\textcolor{red}{\textbf{TODO}}}~}{\textcolor{red}{ \emph{\textbf{TODO:}~#1~}}}}
\newcommand{\R}{\mathbb R}
\newcommand{\N}{\mathbb N_{\ge0}}
\newcommand{\ord}{\operatorname{ord}}
\newcommand{\C}{\mathbb{C}}
\newcommand{\vast}{\bBigg@{4}}
\newcommand{\Vast}{\bBigg@{5}}
\newcommand{\clos}[2][]{\ensuremath{\overline{#2}\ifthenelse{\equal{#1}{}}{}{^{#1}}}}
\newcommand{\W}{\mathcal W}
\newcommand{\PW}{\mathcal {PW}}
\newcommand{\QW}{\mathcal {QW}}
\newcommand{\horn}{\mathcal {W}}
\newcommand{\XX}{\mathcal {X}}
\newcommand{\SX}{\mathcal {F}}
\newcommand{\Ph}{\Phi}
\newcommand{\Phh}{\tilde \Phi}
\title{Lipschitz stratification of complex hypersurfaces in codimension 2}
\author{Adam Parusi\'nski and Lauren\c tiu P\u aunescu}
\address {Universit\'e C\^ote d'Azur,  CNRS,  LJAD, UMR 7351, 06108 Nice, France}
\email{adam.parusinski@univ-cotedazur.fr>}
\address{School of Mathematics and Statistics, The University of Sydney,
  Sydney, NSW, 2006, Australia }%
\email{laurentiu.paunescu@sydney.edu.au}%
\thanks{The first author is grateful for the support and hospitality of the Sydney Mathematical Research Institute (SMRI). 
Partially supported  ANR project LISA (ANR-17-CE40-0023-03).
}
\keywords {
Stratifications,  Zariski equisingularity,  polar curves and surface singularities, Lipschitz stratifications. }
\subjclass[2010]{
32Sxx,	
32B10,   	
14B05. 
}
\definecolor{RED}{rgb}{1,0,0}\definecolor{BLUE}{rgb}{0,0,1} 
\providecommand{\DIFaddbegin}{} 
\providecommand{\DIFaddend}{} 
\providecommand{\DIFdelbegin}{} 
\providecommand{\DIFdelend}{} 
\providecommand{\DIFaddbeginFL}{} 
\providecommand{\DIFaddendFL}{} 
\providecommand{\DIFdelbeginFL}{} 
\providecommand{\DIFdelendFL}{} 
\newcommand{\DIFscaledelfig}{0.5}
\newsavebox{\DIFdelgraphicsbox} 
\newlength{\DIFdelgraphicswidth} 
\newlength{\DIFdelgraphicsheight} 
\LetLtxMacro{\DIFOincludegraphics}{\includegraphics} 
\newcommand{\DIFaddincludegraphics}[2][]{{\color{blue}\fbox{\DIFOincludegraphics[#1]{#2}}}} 
\newcommand{\DIFdelincludegraphics}[2][]{
\sbox{\DIFdelgraphicsbox}{\DIFOincludegraphics[#1]{#2}}
\settoboxwidth{\DIFdelgraphicswidth}{\DIFdelgraphicsbox} 
\settoboxtotalheight{\DIFdelgraphicsheight}{\DIFdelgraphicsbox} 
\scalebox{\DIFscaledelfig}{
\parbox[b]{\DIFdelgraphicswidth}{\usebox{\DIFdelgraphicsbox}\\[-\baselineskip] \rule{\DIFdelgraphicswidth}{0em}}\llap{\resizebox{\DIFdelgraphicswidth}{\DIFdelgraphicsheight}{
\setlength{\unitlength}{\DIFdelgraphicswidth}
\begin{picture}(1,1)
\thicklines\linethickness{2pt} 
{\color[rgb]{1,0,0}\put(0,0){\framebox(1,1){}}}
{\color[rgb]{1,0,0}\put(0,0){\line( 1,1){1}}}
{\color[rgb]{1,0,0}\put(0,1){\line(1,-1){1}}}
\end{picture}
}\hspace*{3pt}}} 
} 
\LetLtxMacro{\DIFOaddbegin}{\DIFaddbegin} 
\LetLtxMacro{\DIFOaddend}{\DIFaddend} 
\LetLtxMacro{\DIFOdelbegin}{\DIFdelbegin} 
\LetLtxMacro{\DIFOdelend}{\DIFdelend} 
\DeclareRobustCommand{\DIFaddbegin}{\DIFOaddbegin \let\includegraphics\DIFaddincludegraphics} 
\DeclareRobustCommand{\DIFaddend}{\DIFOaddend \let\includegraphics\DIFOincludegraphics} 
\DeclareRobustCommand{\DIFdelbegin}{\DIFOdelbegin \let\includegraphics\DIFdelincludegraphics} 
\DeclareRobustCommand{\DIFdelend}{\DIFOaddend \let\includegraphics\DIFOincludegraphics} 
\LetLtxMacro{\DIFOaddbeginFL}{\DIFaddbeginFL} 
\LetLtxMacro{\DIFOaddendFL}{\DIFaddendFL} 
\LetLtxMacro{\DIFOdelbeginFL}{\DIFdelbeginFL} 
\LetLtxMacro{\DIFOdelendFL}{\DIFdelendFL} 
\DeclareRobustCommand{\DIFaddbeginFL}{\DIFOaddbeginFL \let\includegraphics\DIFaddincludegraphics} 
\DeclareRobustCommand{\DIFaddendFL}{\DIFOaddendFL \let\includegraphics\DIFOincludegraphics} 
\DeclareRobustCommand{\DIFdelbeginFL}{\DIFOdelbeginFL \let\includegraphics\DIFdelincludegraphics} 
\DeclareRobustCommand{\DIFdelendFL}{\DIFOaddendFL \let\includegraphics\DIFOincludegraphics} 
\begin{document}
\begin{abstract}
We show that the Zariski canonical stratification of complex hypersurfaces 
is locally bi-Lipschitz trivial along the strata of codimension two.  
More precisely, we study the Zariski equisingular families of surface, not necessarily isolated,  singularities in $\C^3$.   We show that a natural stratification of such a family, given by the singular set and the generic family of polar curves, provides a Lipschitz stratification in the sense of Mostowski. 
 In particular such families are bi-Lipschitz trivial by  trivializations 
obtained by  integrating   Lipschitz vector fields.  
\end{abstract}
\maketitle
\tableofcontents

\section{Introduction}

In the geometric study of complex singular algebraic varieties or analytic spaces the notion of stratification plays an essential role.  It is well known that there always exists a stratification  that is topologically equisingular (i.e. trivial) along each stratum.  This is usually achieved by means of a Whitney stratification.  Another and entirely independent way of constructing such a stratification is Zariski equisingularity.  A desirable  important feature is the existence of a stratification that satisfies stronger equisingularity properties than the one given by Whitney Conditions.  This is known about Zariski (generic) equisingularity, though its precise geometric properties are still to be understood.  For instance, it is well known that Zariski equisingular families of plane curve singularities are bi-Lipschitz trivial.  The goal of this paper is to extend this observation to the next case, the families of surface singularities in $\C^ 3$.

In 1979 O. Zariski \cite{Zariski1979} presented a general theory of equisingularity for algebroid and  algebraic hypersurfaces over an algebraically closed field of characteristic zero.  Zariski's theory is based on the notion of equisingularity  along the strata defined by considering the discriminants loci of successive 
"generic" projections. This concept, now referred to as Zariski equisingularity 
or generic Zariski equisingularity,  
was called by Zariski himself algebro-geometric equisingularity, since it is defined by purely algebraic means but reflects several natural geometric properties. In \cite{zariski65-S2} Zariski studied the case of strata of codimension one.  In this case the hypersurface is locally isomorphic to an equisingular (topologically trivial if the ground field is $\C$) family  of plane curve singularities. Moreover, by Theorem 8.1 of \cite{zariski65-S2}, Zariski's stratification satisfies Whitney's conditions along the strata of codimension one, and over $\C$, by  \cite{PTpreprint},  such an equisingular family of plane curves is bi-Lipschitz trivial, i.e. trivial by a local ambient bi-Lipschitz homeomorphism. 
In general, Zariski equisingularity  implies Whitney's conditions as shown by Speder \cite{speder75}. For a survey on Zariski equsingularity and its recent applications see \cite{P2020}.

 In 1985 T. Mostowski \cite{mostowski85} introduced the notion of Lipschitz stratification of complex analytic spaces or algebraic varieties, by imposing  local conditions on tangent spaces to the strata, stronger than Whitney's conditions. 
 Mostowski's work was partly motivated by the question of Siebenmann and Sullivan \cite{SS79} whether the number of local Lipschitz types on (real or complex) analytic spaces is countable.  Mostowski's Lipschitz stratification satisfies the extension property of  stratified vector fields from lower dimensional to higher dimensional strata, and therefore implies local bi-Lipschitz triviality.  Its  construction is similar to the one  of Zariski, but involves considering many projections at each stage of construction.  It is related to the geometry of polar varieties, as shown by  Mostowski in the case of hypersurface singularities in $\C^3$, see \cite{mostowski88}.  In general, the construction of a Lipschitz stratification is complicated and involves many stages. It was conjectured by J.-P. Henry and T. Mostowski that Zariski equisingular families of surface singularities in $\C^3$ admit natural Lipschitz stratification by taking the singular locus and the family of "generic" polar curves as strata.  We show this conjecture in this paper, see Theorem \ref{thm:maintheorem}.

Recent works, see for instance \cite{V2005}, \cite{NV2016}, \cite{BFS2018}, \cite{halupczok-yin2108} \cite{kovacsics-halupczok2021}, show further development and  progress on understanding the Lipschitz structure of singularities and its relation to other geometric phenomena appearing in 
the study of local properties of complex or real analytic or algebraic singular spaces.
  Among the major results and contributions we mention only the most important ones related to this paper, \cite{BNP2014} where the case of the "inner" metric was considered and \cite{NPpreprint} where the equivalence of Zariski Equisingularity and Lipschitz triviality for families of complex normal surface singularities was announced. 

 Our proof of Theorem \ref{thm:maintheorem} is based on local parameterizations of two geometric objects associated to such families: the polar wedges and the quasi-wings.  
 Both originate from the classical wings introduced by Whitney in \cite{whitneyannals65}.  The polar wedges are neighborhoods of families of polar curves, the critical loci of  corank-one projections. The quasi-wings,  originally introduced in \cite{mostowski85}, 
 are neighbourhoods of curves on which this projection is regular (with a control on the derivatives). Their local parameterizations, interesting by themselves, in the case of polar wedges originate from \cite{brianconhenry80} and \cite{teissierRabida82} and were recently considered  in \cite{NPpreprint}.  
 
  As we show the quasi-wings and the polar wedges cover a neighbourhood of the singularity.  The proof of this fact follows from the
 analytic wings construction of \cite{PP17}.  

The definition of  "generic projection" is crucial for Zariski's theory.
 Zariski's study of codimension one singularities (families of plane curve singularities) required just transverse projections. 
  This is no longer the case for singularities in codimension 2. 
  In \cite{luengo85} Luengo gave an example of a family of surface 
singularities in $\C^3$ that is Zariski equisingular for one transverse 
projection but not for a generic transverse projection. 
Therefore we make precise what we mean by "generic projection" in our context and  we state it in our Transversality Assumptions.  This is important since this  condition can be computed and algorithmically verified. 

\medskip
\noindent
\textbf{Acknowledgment.}\\
The authors would like to thank the referee for many valuable remarks and suggestions that significantly  improved our paper.

\section{Set-up and statement of results}
Let  $f(x,y,z,t): (\C^{3+l},0) \to (\C, 0)$ be analytic.  We suppose that $f(0,0,0,t) =0$ for every $t \in (\C^l,0),$ and regard  $f$ as an analytic family $f_t(x,y,z)= f(x,y,z,t)$ of analytic function germs parameterized by $t$. 
In what follows we suppress for simplicity the germ notation. 

We denote  by $\mathcal X= f^{-1}(0)$ and by $\Sigma_f$ the singular set of $\mathcal X$.  We always assume that the germs $f_t$ are reduced, 
 and that the system of coordinates is sufficiently generic (see the Transversality Assumptions below for a precise formulation).  In particular we assume that the restriction of the projection 
$\pi (x,y, z, t) = (x,y, t)$ to $\mathcal X$ is finite. 

 Denote 
by $C_f$ the polar set of $\pi|_{\mathcal X}$,  i.e. the closure of the critical locus of the projection $\pi $ restricted to the regular part of $\mathcal X$.  
The set $C_f$ can be understood as a family of space curves (polar curves) parameterized by $t$. Let   
\begin{align}
S = \{  f(x,y,z; t)=  f'_z (x,y,z; t) = 0\} = \Sigma_f \cup C_f .
\end{align}
The main goal of this paper is to show the following result, Theorem  \ref{thm:maintheorem} (for the notion of Zariski equisingular families of hypersurface singularities in $(\C^3,0)$ see the next subsection 
\ref{ss:Zariski equisingularity},  for Mostowski's  Lipschitz stratification see 
subsection \ref{ss:lipschitz stratification}).

\begin{thm}\label{thm:maintheorem}
Suppose that the family $\mathcal X_t = f_t^{-1} (0)$ is generically linearly Zariski equisingular.  Then it is bi-Lipschitz trivial.  That is,  there are neighbourhoods $\Omega$ of $0$ in $\C^3\times \C^l$, $\Omega_0$ of $0$ in $\C^3$, and $U$ of $0$ in $\C^l$, and a bi-Lipschitz homeomorphism 
$$\Phi : \Omega_0 \times U \to  \Omega, $$
satisfying $\Phi (x,y,z,t) = (\Psi (x,y,z,t) , t)$, $\Phi (x,y,z,0) = (x,y,z,0)$, 
such that 
$$\Phi (\mathcal X_0\times U ) ={} \mathcal X.$$

Moreover, $\{\mathcal X \setminus S, S\setminus T, T\}$, where $T=\{0\}\times \C^l$,  
defines a Lipschitz stratification of $\mathcal X$ in the sense of Mostowski.  In particular
the homeomorphism $\Phi$ can be obtained by  integration of Lipschitz vector fields.  
\end{thm}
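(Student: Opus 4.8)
The plan is to reduce the bi-Lipschitz triviality statement to the construction of a Lipschitz stratification, and then to build that stratification explicitly from the two families of geometric objects advertised in the introduction — the polar wedges around $C_f$ and the quasi-wings around curves where $\pi|_{\mathcal X}$ is regular with controlled derivatives. The first step is to set up the normalization of the family via Zariski equisingularity: generic linear Zariski equisingularity of $\mathcal X_t=f_t^{-1}(0)$ gives, after a generic linear change of the $(x,y)$-coordinates, a sequence of discriminants whose successive equisingularity lets us parameterize $S=\Sigma_f\cup C_f$ and control the geometry of the projection $\pi(x,y,z,t)=(x,y,t)$. In particular I would first record that $C_f$ is a well-behaved family of space curves over the $(x,y,t)$-base and that the Transversality Assumptions make the discriminant $\Delta_f=\pi(C_f)$ a Zariski equisingular family of plane curves in the $(x,y)$-plane; this is the engine that drives all the estimates below.

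Next I would construct the covering of a punctured neighbourhood of the singular locus by polar wedges and quasi-wings. Following the cited constructions (\cite{brianconhenry80}, \cite{teissierRabida82}, \cite{NPpreprint} for the polar wedges, \cite{mostowski85} for the quasi-wings, and \cite{PP17} for the fact that analytic wings cover a neighbourhood), I would produce finitely many local parameterizations: near each branch of $C_f$ a polar wedge, i.e. a Lipschitz-parameterized neighbourhood $\mathcal W$ of that branch on which one has precise control of the distance to $C_f$ and of the vertical derivative $f'_z$; and elsewhere a quasi-wing $\qwing$ parameterized over a sector, on which $\pi|_{\mathcal X}$ is a bi-Lipschitz chart with bounded derivatives of the inverse. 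The key geometric lemma to extract here is that these finitely many wings and wedges, taken over all $t$, cover a full neighbourhood of $0$ in $\mathcal X$ minus $S$, with overlaps that are themselves controlled; this is where the analytic wings construction of \cite{PP17} is used, and it is the step I expect to be the main obstacle, because one must simultaneously keep the parameterizations Lipschitz in $t$ and verify the covering property uniformly as $t\to 0$ and as one approaches the lower strata.

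With the covering in hand, the stratification $\{\mathcal X\setminus S,\ S\setminus T,\ T\}$ with $T=\{0\}\times\C^l$ is checked to satisfy Mostowski's conditions (the existence of a chain of projections with the required inequalities on norms and on differences of projection kernels, as recalled in subsection \ref{ss:lipschitz stratification}). The verification is done chart by chart: on a quasi-wing the projection $\pi$ itself, corrected by the wing parameterization, supplies the needed family of projections and the Mostowski inequalities follow from the derivative bounds built into the quasi-wing; on a polar wedge one uses the wedge parameterization together with an auxiliary projection transverse to $C_f$, and the inequalities follow from the distance estimates to $C_f$ and from the Zariski equisingularity of $\Delta_f$ controlling how $C_f$ sits over the base. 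One must also check the conditions along $S\setminus T$ relative to $T$, which reduces to the codimension-one (plane curve) case already known by \cite{zariski65-S2} and \cite{PTpreprint}. Patching the local data into global stratified data requires a partition-of-unity argument compatible with the Lipschitz category, which is standard once the local estimates are uniform.

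Finally, the bi-Lipschitz triviality in the displayed form is deduced from the Lipschitz stratification by Mostowski's extension theorem for stratified Lipschitz vector fields: the constant vector field $\partial/\partial t$ on the parameter space lifts to a stratified Lipschitz vector field on $\mathcal X$ tangent to all strata, and integrating it produces $\Phi(x,y,z,t)=(\Psi(x,y,z,t),t)$ with $\Phi(\cdot,0)=\mathrm{id}$ and $\Phi(\mathcal X_0\times U)=\mathcal X$; since the ambient space $\C^3\times\C^l$ is itself trivially Lipschitz stratified compatibly with $\mathcal X$, the same integration yields the ambient bi-Lipschitz homeomorphism $\Phi:\Omega_0\times U\to\Omega$. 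The only subtlety at this last stage is to ensure the lifted vector field is genuinely Lipschitz up to and including the small strata, which is precisely what Mostowski's conditions were designed to guarantee, so no further work is needed beyond invoking his theorem.
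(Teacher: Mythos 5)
Your high-level skeleton (polar wedges plus quasi-wings, then bi-Lipschitz triviality by integrating stratified Lipschitz vector fields) matches the paper, but the two steps you describe as routine are precisely where your plan breaks down, and they are handled quite differently in the actual proof. First, the paper does not verify Mostowski's chain/projection inequalities ``chart by chart'' and does not patch local data with a Lipschitz partition of unity. Those inequalities, and equally the vector-field extension property of Proposition \ref{prop:LipStr-charcterization}, compare points lying in \emph{different} wedges and wings and at different scales relative to $S$ and $T$, so they are not local statements that a single quasi-wing chart with bounded derivatives can certify; the whole difficulty is the uniformity of the constant $C$ across such comparisons. The paper's route is instead: a complete description of stratified Lipschitz vector fields on polar wedges in terms of their parameterizations over allowable sectors (Propositions \ref{prop:LVFcriterion1} and \ref{prop:LVFcriterion2}, resting on the quadratic structure $y_i(u,b,t)=y_i(u,0,t)+b^2u^{m_i}\varphi_i$ of Proposition \ref{prop:quadratic-t} and the explicit distance formulas), explicit extension formulas on $\PW$, and then a \emph{valuative} reduction (Proposition \ref{prop:LVE}, which itself needs a nontrivial proof adapting the arguments of \cite{Lipschitz-Fourier}): if the stratification were not Lipschitz, the extension property would fail along a pair of real analytic arcs, and it suffices to refute that.

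Second, your covering step is not what the paper does and, as stated, is the missing idea rather than a lemma you can extract from the references: there is no fixed finite family of quasi-wings covering $\mathcal X\setminus S$ uniformly in $t$ with controlled overlaps. Quasi-wings are constructed \emph{per arc}: given the pair of offending arcs produced by the valuative criterion, one embeds each in a quasi-wing via the arc-wise analytic trivializations of \cite{PP17}, and in general only after an arc-dependent generic linear change of coordinates $y\mapsto y-b_0z$ (Corollary \ref{cor:existenceq-wing}); this coordinate change moves the polar wedges, which forces the additional comparison $\dist(\gamma(s),C_j)\gtrsim s^{m_j}$ (Proposition \ref{prop:distancearctoCj}) and the weakened condition 3') in the extension step from a polar wedge to a quasi-wing (Proposition \ref{prop:ext:pw-->qw}), before the two-case analysis of Section 10 closes the argument. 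None of this is captured by ``cover, check each chart, patch''; without the valuative criterion and the arc-dependent quasi-wing construction your proof has no mechanism for handling points far from the polar wedges, which is where the conjecture's content lies. Your final step (lifting $\partial/\partial t$ and integrating) is fine once the Lipschitz stratification is established.
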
 {}

The non-parameterized version, i.e. if $l=0$,  of Theorem \ref{thm:maintheorem} was proven in \cite{mostowski88}, and the general version, as stated above, was conjectured by J.-.P Henry and T. Mostowski more than twenty years ago.  The bi-Lipschitz triviality for families of normal surface singularities in $\C^3$ was announced in \cite{NPpreprint}.  
Our proof uses some  ideas of \cite{NPpreprint} and \cite{BNP2014}, in particular that of polar wedges.  Nevertheless, our main idea of proof is different from that of \cite{NPpreprint}.  Moreover, we show a much stronger bi-Lipschitz property, the existence of a Lipschitz stratification in the sense of Mostowski.  This implies that the trivialization $\Phi$ can be obtained by integration of Lipschitz vector fields.  There is a difference between arbitrary bi-Lipschitz trivializations, and the ones obtained by integration of Lipschitz vector fields (note that the bi-lipschitz trivializations of \cite{BNP2014}, \cite{NPpreprint}, 
\cite{V2005} do not satisfy this property). For instance the latter one implies the continuity of the Gaussian curvature, see  \cite{mostowski85} section 10 and \cite{P2005}. 

The notion of Lipschitz stratification was defined by Mostowski in terms of regularity conditions on 
tangent spaces to strata, but to show that $\{\mathcal X \setminus S, S\setminus T, T\}$ is a Lipschitz stratification we do not use Mostowski's definition but an equivalent characterization based on the extension of stratified Lipschitz vector fields, see subsection \ref{ss:lipschitz stratification} below.  For this we use two, in a way, complementary constructions, the polar wedges of \cite{BNP2014} and \cite{NPpreprint} (covering neighbourhoods of the critical loci of a generic linear projection) and the quasi-wings of \cite{mostowski85} 
(covering their complements).

 Both can be understood 
as a generalized version of the classical wings. 
Actually we need a strong analytic form of the latter given by \cite{PP17}, in order to construct for an arbitrary real analytic arc, not contained in polar wedges, first a complex analytic wing and then a quasi-wing containing it, see Proposition \ref{prop:existenceq-wing}.  

Many parts of the proof are fairly technical.  In order to simplify the exposition we used the following strategy.  Virtually, for all the geometric constructions of the proof, including the description of the stratified Lipschitz vector fields on polar wedges in Proposition \ref{prop:LVFcriterion2} or on quasi-wings in Proposition \ref{prop:LVFcriterion2-qw}, the emphasis is given to the non-parameterized case, i.e., with $l=0$.  The profound understanding of this case, rightly stated, makes the parameterized case much easier.


\subsection{Zariski equisingularity}\label{ss:Zariski equisingularity}

Given a family of reduced analytic functions germs  $f_t(x,y,z): (\C^3,0)\to (\C, 0)$ 
as above, we denote by  $\Delta (x,y,t)$ the discriminant of the projection $\pi$ restricted to $\mathcal X$.  The zero set of $\Delta (x,y,t)$  is a family of plane curve singularities  parameterized by $t$.  We say that the family $\mathcal X_t$ is \emph{Zariski equisingular (with respect to the projection $\pi$)} if  $t\to \{\Delta (x,y,t)=0\}$ is an equisingular family of plane curves, that is satisfying one of the standard equivalent definitions, see \cite{zariski65-S1}, \cite[p. 623]{teissier77}.  We shall often use  the classical result 
saying that a family of equisingular plane curves admits a uniform Puiseux expansion with respect to parameters,  in the sense of \cite[Theorem 2.2]{PP17}.  
We refer to it as to the Puiseux with parameter theorem.

We say that the family $\mathcal X_t$ is \emph{generically linearly Zariski equisingular} if it is Zariski equisingular after a generic linear change of coordinates $x,y,z$. 

In the proof of Theorem \ref{thm:maintheorem} we 
use the following precise assumptions on $f$, called Transversality Assumptions,  that are implied by the generic linear Zariski equisingularity.  

Let us denote by $\pi_b$ the projection $\C^3\times \C^l\to \C^ 2\times \C^l$ parallel to 
$(0,b,1,0)$, that is $\pi_b(x,y,z,t) = (x, y - bz,t)$.  
 We denote by  $\Delta_b (x,y,t)$ the discriminant 
 of the projection $\pi_b$ restricted to  $\mathcal X$.

\medskip
\noindent
\textbf{Transversality Assumptions.} 
The tangent cone $C_0(\mathcal X_0)$ to $\mathcal X_0= f_0^{-1} (0)$ does not contain the $z$-axis and, for $b$ and $t$ small, the family of the discriminant loci $\Delta_{b}=0$ is an equisingular family of plane curve singularities with respect to $b$ and $t$ as parameters.  
Moreover, we suppose that $\Delta_{0}=0$ is transverse to the $y$-axis 
and that $x=0$ is not a limit of tangent spaces to $\mathcal X_{reg}$, the regular part of $\mathcal X$.

\begin{rmk} 
Since Zariski equisingular families are  equimultiple, see \cite{zariski75} or 
\cite{PP17} [Proposition 1.13], the above assumptions imply  the following.  The tangent  
cone $C_0(\mathcal X_t)$  does not contain $(0,b,1)$, for $t$ and $b$ small. 
The $y$-axis is transverse to every $\{(x,y); \Delta_{b} (x,y,t)=0\}$, also for 
$t$ and $b$ small.  
\end{rmk}

We now show that a generically linearly Zariski equisingular family satisfies, after a linear change of coordinates $x,y,z$,  the Transversality Assumptions. First we need the following lemma.  

\begin{lemma}
The family $f_t(x,y,z)=0$ is generically linearly Zariski equisingular if and only if, after a linear change of coordinates $x,y,z$,  the family $f(x+az,y+bz,z,t)= 0$,  for $a,b,t$ small, 
is Zariski equisingular with respect to parameters $a,b,t$.    
\end{lemma}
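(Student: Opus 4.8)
The plan is to reformulate both sides of the asserted equivalence in terms of the equisingularity of families of discriminant curves of $\mathcal X$ for varying projection directions, and then to apply the standard Milnor-number description of Zariski equisingularity for families of plane curves.

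First I would set up a dictionary between shears and projection directions. The linear isomorphism $\sigma_{a,b}(x,y,z)=(x+az,y+bz,z)$ conjugates the projection $\pi$ into the projection of $\mathcal X_t$ along the direction $(a,b,1)$, so the discriminant of $\pi$ restricted to $\{f(x+az,y+bz,z,t)=0\}$ is, up to the linear reparametrization $\sigma_{a,b}$ of the target plane, the discriminant $\Delta_{(a,b,1)}(x,y,t)$ of the projection of $\mathcal X_t$ in direction $(a,b,1)$. More generally, for a direction $v$ outside the tangent-cone directions let $\Delta_v(x,y,t)$ be the discriminant of the projection of $\mathcal X_t$ along $v$; these assemble into a family $\mathcal D$ of plane curves over a dense open subset of $\P^2\times(\C^l,0)$, reduced for $v$ generic, and depending analytically on $(v,t)$. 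A preliminary linear change of coordinates in $x,y,z$ simply chooses which direction $v_0$ sits over the origin, and the $(a,b)$-shears then sweep out a neighbourhood of $v_0$. With this dictionary, "$f$ is generically linearly Zariski equisingular" becomes: for $v$ outside a proper analytic subset of $\P^2$, the restriction $\mathcal D|_{\{v\}\times(\C^l,0)}$ is an equisingular family over $t$; and "after a linear change, $f(x+az,y+bz,z,t)$ is Zariski equisingular over $a,b,t$" becomes: for some $v_0$, $\mathcal D$ is an equisingular family over $(v,t)$ near $(v_0,0)$.

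Next I would bring in the numerical invariant. The Milnor number $\mu(v,t):=\mu(\Delta_v(\cdot,\cdot,t))$ is an upper-semicontinuous, (analytically) constructible function, and for families of reduced plane curves Zariski equisingularity along a section is equivalent to constancy of $\mu$ along it. Hence the \emph{absolute} equisingularity locus $E_1$ — the set of points near which $\mathcal D$ is equisingular over all of $(v,t)$ — equals $\{\mu=\mu_{\min}\}$ and is Zariski-open, while over each direction $v$ the \emph{relative-in-$t$} equisingularity locus $E_2$ meets the fibre $\{v\}\times(\C^l,0)$ in the set where $\mu(v,\cdot)$ attains its own minimum $\mu_{\min}(v)$ along that fibre. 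One checks that $\mu_{\min}(v)=\mu_{\min}$ for $v$ outside a proper analytic subset $U$ of $\P^2$ (since $\{\mu=\mu_{\min}\}$ is dense, its image in $\P^2$ contains a dense open set), so over directions in $U$ the loci $E_1$ and $E_2$ coincide fibre by fibre. Putting this together: generic linear Zariski equisingularity says $(v,0)\in E_2$ for generic $v$; intersecting with $U$ this gives $(v,0)\in E_1$ for generic $v$, hence $E_1\cap(\P^2\times\{0\})\neq\emptyset$, which is exactly the existence of a $v_0$ with $\mathcal D$ equisingular over $(v,t)$ near $(v_0,0)$, i.e. (transporting through a linear change placing $v_0$ over the origin and through the $\sigma_{a,b}$) the right-hand side of the lemma. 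Conversely, such a $v_0$ gives $(v_0,0)\in E_1$, whence $E_1\cap(\P^2\times\{0\})$ is a nonempty Zariski-open, hence dense, subset of $\P^2\times\{0\}$; intersecting with $U$ yields $(v,0)\in E_1=E_2$ for generic $v$, which is generic linear Zariski equisingularity. (The reduction in the first step and the easy half $E_1\subseteq E_2$ use only that restricting an equisingular family to a parameter subspace keeps it equisingular, i.e. constancy of $\mu$.)

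The main obstacle is precisely the comparison $E_1=E_2$ over generic directions, that is, the passage from fibrewise equisingularity over $t$ to joint equisingularity over $(v,t)$; this is where one needs the Milnor-number characterization of Zariski equisingularity for plane curves together with the fact — implicit in Zariski's theory of generic projections — that a generic linear projection of $\mathcal X_t$ realizes the generic discriminant, so that over the dense set $U$ the two relevant minima of $\mu$ agree. A secondary point that deserves care is uniformity in the small parameter $t$: one must choose $U$ and the neighbourhood of $(v_0,0)$ so that constancy of $\mu$ holds on a genuine neighbourhood in $\P^2\times(\C^l,0)$, which again follows from upper-semicontinuity of $\mu$ once the minimum $\mu_{\min}$ is attained at $(v_0,0)$.
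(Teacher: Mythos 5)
Your argument is correct and is essentially the paper's own proof: the easy direction by restriction, and for the converse the same scheme — regard the discriminants as a single family over (direction)$\times\,t$, use a semicontinuous invariant characterizing equisingularity of plane-curve families (the paper invokes Zariski's equimultiplicity-of-the-discriminant criterion and explicitly mentions the Milnor number as an equivalent choice) to see that the non-equisingular locus is a proper analytic subset $Y$, and conclude that fibrewise equisingularity in $t$ for an open set of directions forces some point $(v_0,0)$ of the $t=0$ slice into the generic stratum, after which a linear change of coordinates recenters there; your $E_1$ is exactly the complement of the paper's $Y$, and your "intersect with $U$" step is the paper's "$Y$ cannot contain $U\times\{0\}$". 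One caveat to fold in: since the singularities are not assumed isolated, the discriminant is in general non-reduced for \emph{every} direction (it is non-reduced along the image of $\Sigma_f$), so $\mu(v,t)$ must be the Milnor number of the reduced discriminant curve (the zero set, as in the paper's definition of Zariski equisingularity) rather than of $\Delta_v$ itself — with that reading the argument goes through unchanged.
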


\begin{proof}
The "if" part is obvious.  We show the "only if".  
 Let $\Delta (x,y,a,b,t)$ be the discriminant of $f(x+az,y+bz,z,t)$. By assumption there is an open subset $U\subset \C^2$ such that this family of plane curve germs  $\Delta (x,y,a,b,t)=0$ is equisingular with respect to $t$ for  every $(a,b)\in U$.
 Fix a small neighbourhood $V$ of the origin in $\C^l$ so that the subset 
 of parameters $(a,b,t)\in U\times V,$ such that $\Delta (x,y,a,b,t)=0$ changes the equisingularity type,  is a proper analytic subset of $Y\subset U\times V$.
 \textcolor{black}{The existence of such $Y$ follows for instance from  Zariski 
 \cite{zariski65-S1}, where it is shown that a family of plane curve singularities is equisingular if and only if its discriminant by a transverse 
 projection is equimultiple.(Equivalently, one may use semicontinuous invariants characterizing equisingularity such as the Milnor number for instance.)}     
 Then $Y$ cannot contain $U\times \{0\}$ (this would contradict the Zariski equisingularity of 
 $\Delta =0$ for $(a,b)\in U$ arbitrary and fixed).  Therefore, the family 
 $f(x+az,y+bz,z,t)= 0$ is Zariski equisingular for the parameters $a,b,t$ in a neighborhood of 
any point of 
 $(U\setminus Y)\times \{0\}$. This shows the claim.   
\end{proof}

Suppose now that the family $f_t=0$ is generically linearly Zariski equisingular and choose a generic line $\ell $ in the parameter space of $(a,b)\in U$ in the notation  of the proof of the above lemma.  The pencil of kernels of $\pi_{a,b}(x,y,z,t) = (x-at, y - bz,t)$, $(a,b)\in \ell$,  corresponds to  a hyperplane $H\subset \C^3$.  Choose coordinates $x,y,z$ so that $H= \{x=0\}$ and then $\ell$ corresponds to the pencil of projections parallel to $(0, b,1) \in H$.  Then in this system of  coordinates $(x,y,z),$ $f$ satisfies the Transversality Assumptions.


\subsection{Lipschitz stratification}\label{ss:lipschitz stratification}
In  \cite{mostowski85} T. Mostowski introduced a sequence of conditions on 
the tangent spaces to the strata of a stratified subset of $\C^n$ that, if satisfied, imply  the Lipschitz 
triviality of the stratification along each stratum.  Mostowski showed 
the existence of such stratifications for germs of complex analytic subsets of 
$\C^n$.  Note that there is no canonical Lipschitz stratification in the sense of 
Mostowski in general.  

For more information about the Lipschitz stratification we refer the interested reader to
 \cite{mostowski85}, \cite{Lipschitz-Fourier}, \cite{Lipschitz-review}, \cite{halupczok-yin2108}.

In \cite{mostowski88} Mostowski gave a criterion for a set to be a  codimension one stratum of 
Lipschitz stratification of a complex surface germ in $\C^3$, see the second example on pages 320-321 of \cite{mostowski88}.  This criterion implies that a generic polar curve 
can be chosen as such a stratum.  It is not difficult to complete Mostowski's
 argument and show Theorem \ref{thm:maintheorem} in the non-parameterized case ($l=0$).  
 In subsection \ref{ss:lip-strat-nonpar} we give a different proof which implies the parameterized case as well.  

Mostowski's conditions imply the existence 
of extensions of Lipschitz stratified vector fields from lower dimensional to 
higher dimensional strata, the property which, as shown in 
\cite{Lipschitz-Fourier}, is equivalent to Mostowski's conditions.  
Let us recall this equivalent definition.  
For this it is convenient to express Mostowski's stratification in terms of its skeleton, 
that is the union of strata of dimension $\le k$.  Let $X\subset \C^n$ be a complex analytic subset of dimension $d$ and let 
\begin{align}\label{eq:Lstratification}  X=X^d \supset X^{d-1} \supset \cdots \supset X^l \ne\emptyset ,
\end{align}
$l\ge 0$, $X^{l-1} =\emptyset$, be its filtration by complex analytic sets such that every $X^k \setminus X^{k-1}$ 
is either empty or nonsingular of pure dimension $k$.  

Our proof is based on the following characterization of Lipschitz stratification.

\begin{prop}[{\cite[Proposition 1.5]{Lipschitz-Fourier}}]\label{prop:LipStr-charcterization}
The filtration 
\eqref{eq:Lstratification} induces is a Lipschitz stratification if and only if one of the following 
equivalent conditions holds: 
\begin{enumerate}
\item [(i)]
There exists $C > 0$ such that for every $W \subset X$ satisfying $X^{j-1}\subseteq 
 W \subset X^j$, every Lipschitz stratified vector field on $W$ with a Lipschitz constant $L$, bounded on $W \cap X^l$ by $K$, can be extended to a Lipschitz stratified vector field on $X^j$ with a Lipschitz constant $C(L+K)$.
\item [(ii)] 
There exists $C > 0$ such that for every $W = X^{j-1}\cup \{q\}$, 
$q \in X^j$, each Lipschitz stratified vector field on $W$ with a Lipschitz constant $L$, bounded on $W \cap X^l$ by $K$, can be extended to a Lipschitz stratified vector field on $W\cup \{q'\}$, $q' \in X^j$, with a Lipschitz constant $C(L+K)$.
\end{enumerate} 
\end{prop}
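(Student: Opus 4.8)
This proposition recasts Mostowski's original definition of a Lipschitz stratification — a filtration \eqref{eq:Lstratification} together with a system of linear projections $\{P^k_q\}$ adapted to the strata and satisfying the chain conditions (L1)--(L3) of \cite{mostowski85} — in terms of extensions of stratified Lipschitz vector fields. The plan is to prove the cycle (Lipschitz stratification) $\Rightarrow$ (i) $\Rightarrow$ (ii) $\Rightarrow$ (Lipschitz stratification), so that the equivalence of (i) and (ii) follows automatically.

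Assume first that \eqref{eq:Lstratification} is a Lipschitz stratification, and fix the associated projections $P^k_q$ and structural constant, together with a coherent system of chains. Given $W$ with $X^{j-1}\subseteq W\subset X^j$ and a stratified Lipschitz vector field $v$ on $W$ of Lipschitz constant $L$ and bound $K$ on $W\cap X^l$, I would extend $v$ to $X^j$ by the telescoping/interpolation formula Mostowski uses to prove bi-Lipschitz triviality: set $\tilde v=v$ on $W$, and for $q\in X^j\setminus W$ take the chain $q=q_j,q_{j-1},\dots,q_l$ descending through the skeleta (all of whose points below level $j$ lie in $W$, since $X^{j-1}\subseteq W$, so $v$ is known there) and define $\tilde v(q)$ as the corresponding alternating sum of the $P^m_{q_m}$ applied to the values of $v$ at the chain points. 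Conditions (L1)--(L3) then give that $\tilde v$ is tangent to the strata, is Lipschitz on $X^j$ with constant $C(L+K)$, and stays bounded by a multiple of $K+L$ on $X^l$. This estimate is the bulk of the argument; it is essentially Mostowski's, with the dependence of the constants made explicit in \cite{Lipschitz-Fourier}.

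The implication (i) $\Rightarrow$ (ii) is immediate: $W=X^{j-1}\cup\{q\}$ with $q\in X^j$ satisfies $X^{j-1}\subseteq W\subset X^j$, and restricting to $W\cup\{q'\}$ the extension furnished by (i) gives the desired extension with the same constant.

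The substantive direction is (ii) $\Rightarrow$ (Lipschitz stratification): one must produce projections satisfying (L1)--(L3). I would take $P^k_q$ to be a regularization of the orthogonal projection onto $T_q(X^k\setminus X^{k-1})$ at smooth points $q$, extended to nearby points of $X$ in the standard way, and then verify the three conditions by feeding suitable test vector fields into (ii). Each of (L1)--(L3) is a quantitative comparison of tangent data at two points (respectively along a chain), and the idea is that choosing the test field essentially constant — equal to a prescribed vector $w$ near the relevant point of $X^{j-1}$ and vanishing elsewhere on $X^{j-1}$ — forces, through the bound $C(L+K)$ on its extension to the second point, the required inequality on the $P^k_q$. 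I expect the main obstacle to be condition (L2): it is not a two-point estimate but a statement along an entire chain $q_j,\dots,q_l$, so the test field has to be adapted to the whole chain and one must control the composition $P^j_q\circ P^{j-1}_{q_{j-1}}\circ\cdots$ by applying (ii) successively down the levels, keeping the accumulated constant bounded (the number of levels being at most $d-l+1$). Verifying this, and checking that the explicit regularized projections do satisfy (L1) and (L3) with a uniform constant, is where the real work lies; the remaining bookkeeping is routine.
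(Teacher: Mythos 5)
First, a point of comparison: the paper does not prove this proposition at all — it is quoted from \cite[Proposition 1.5]{Lipschitz-Fourier} — so there is no internal proof to match against; the closest the paper comes is the proof of Proposition \ref{prop:LVE}, which adapts the arguments of Propositions 1.2 and 1.5 of that reference. Your overall plan (Lipschitz stratification $\Rightarrow$ (i) via Mostowski's chain/telescoping extension, (i) $\Rightarrow$ (ii) trivially, (ii) $\Rightarrow$ Lipschitz stratification by feeding test vector fields into the extension property) is indeed the route taken in the cited reference, and your first two implications are fine in outline.

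However, the converse direction as you describe it has genuine gaps. Mostowski's definition does not involve an auxiliary ``system of linear projections adapted to the strata'' that must be produced or regularized off the strata: the conditions are inequalities for the orthogonal projections $P_{q_m}$ onto the tangent spaces $T_{q_m}\mathring{X}^{j_m}$ at the points of a chain (see the conditions (M1)--(M3) recalled in Section \ref{sec:LVEC}), so in (ii) $\Rightarrow$ (Lipschitz stratification) there is nothing to construct, only these inequalities to verify, and your proposed regularized projections are extraneous and would need their own justification. More seriously, the test fields you propose — essentially constant equal to $w$ near a point of $X^{j-1}$ and vanishing elsewhere — are in general neither tangent to the strata nor Lipschitz with the right scaling: to produce the denominator $\dist(q,X^{j_k-1})$ in (M1)--(M2) one needs fields that vanish on the lower skeleton and grow like the distance to it, the standard choice being $x\mapsto P_x\bigl(\dist(x,X^{j_k-1})\,v\bigr)$ with $v$ a unit vector in the limit of tangent spaces along the chain. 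One also needs an induction on the skeleta (so that this field extends to a stratified Lipschitz field on $X^{j-1}$ with controlled constant before (ii) is invoked), and a minimal-length counterexample argument over the chain, telescoping the composition $P^{\perp}_{q_1}P_{q_2}\cdots P_{q_k}$ against the already-established shorter compositions. This is precisely the mechanism in the proof of Proposition \ref{prop:LVE} in the paper (following \cite{Lipschitz-Fourier}); without the distance-weighted tangential test fields and the induction over chain length, the cutoff fields you describe do not yield (M1)--(M2), so the heart of the (ii) $\Rightarrow$ (Lipschitz stratification) implication is missing.
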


Here by a stratified vector field we mean a vector field tangent to strata.  
In our particular case, stratification $\{\mathcal X \setminus S, S\setminus T, T\}$ it Lipschitz if and only if 
 there is a constant $C>0$ such that:
\begin{enumerate}
\item [(L1)]
for every couple of points $q,q'\in S\setminus T$, every stratified Lipschitz vector field on $T\cup \{q\}$, 
 with Lipschitz constant $L$ and bounded by $K$, can be extended to a Lipschitz 
 stratified vector field on $T\cup \{q,q'\}$ with Lipschitz constant $C (L+K) $.
 \item [(L2)]
for every couple of points $q,q'\in \mathcal X\setminus S$, every stratified Lipschitz vector field on $S\cup \{q\}$ with Lipschitz constant $L$ and bounded by $K$, can be extended to a Lipschitz vector field on $S\cup \{q,q'\}$ with Lipschitz constant $C (L+K)$.
\end{enumerate}

 In order to show the conditions (L1) and (L2) we consider two geometric constructions, 
the quasi-wings of Mostowski \cite{mostowski85} and the polar wedges of \cite{BNP2014} and \cite{NPpreprint}, that, as sets, together cover the whole 
$\mathcal X$.  We first show the (L1) condition in general and the (L2) condition on polar wedges.  This part of the proof is based on a complete description of the
stratified Lipschitz vector fields on polar wedges in terms of 
their parameterizations, see Section   \ref{sec:stratlip}.  
Note that in order to compare points on polar wedges we work with fractional powers, using parameterizations over the same allowable sector, see the Subsection \ref{ss:allowablesectors} for more details. 
In order to show (L2) on the quasi-wings we employ the following strategy.  
If Mostowski's conditions fail then they fail along real analytic arcs 
$\gamma(s), \gamma'(s)$, $s\in [0,\varepsilon)$, see \cite{mostowski85} Lemma 6.2 or the valuative  Mostowski's conditions of \cite{halupczok-yin2108}. 

For such arcs, however, if they are not in the union of polar wedges, we can construct quasi-wings containing them, 
say $\QW$ and $\QW'$ respectively, and then we show that  the  stratification  $\{\QW \cup \QW'  \setminus S, S\setminus T, T\}$ satisfies criterion (L2) on the arcs $\gamma(s), \gamma'(s)$.
For a precise statement and proof justifying this strategy the reader is referred to the rather technical Section \ref{sec:LVEC}.

\subsection{Notation and conventions}

In what follows we often use the following notations. For two complex function germs $f,g :(\C^k,0)\to (\C,0)$ we write :
\begin{enumerate}

\item
$|f (x)|\lesssim |g(x)|$ (or $ f =O(g)$) if  $|f(x)|\leq c|g(x)|,  c>0$ a given constant, in a neighbourhood of $ 0$. 

\item
$|f (x)|\sim |g(x)|$ if $|f(x)|\lesssim |g(x)|\lesssim |f(x)|$ 
in a neighbourhood of $ 0$.

\item
$|f (x)| \ll |g(x)|$ (or $f=o(g)$) if the ratio $\frac{|f(x)|}{|g(x)|}\to 0$ as $\|x\|\to 0$.

\end{enumerate}

While parameterizing analytic curve singularities  or families of such 
singularities in $\C^2$ and $\C^3$ using Puiseux Theorem, we ramify in variable $x=u^n$.  We often have to replace such an exponent $n$ by a multiple in order for such parameterizations to remain analytic, but we keep denoting it by $n$ for simplicity.  This makes no harm since we always work over an admissible sector as explained in subsection \ref{ss:allowablesectors}. By an analytic unit we mean a nowhere vanishing analytic function or a germ of such function.

\section{Families of polar curves} 

In this section we discuss how the families of polar curves of  
$\mathcal X$, associated to the projections $\pi _b$, $b\in \C$, 
depend to $b$.  The 
main result is Proposition \ref{prop:quadratic} (non-parameterized case) and Proposition \ref{prop:quadratic-t} (parameterized case).  The proposition in the non-parameterized case appeared  in the proof of the Polar wedge lemma, i.e. Proposition 3.4, of \cite{BNP2014}.  
The proofs of Propositions \ref{prop:quadratic} and \ref{prop:quadratic-t}
 are based on a key Lemma \ref{lem:keylemma1} due to   \cite{brianconhenry80} and  \cite{teissierRabida82}.

\subsection{Non-parameterized case} 
For simplicity  we first consider  the case of $f(x,y,z)$ without parameter.  
We assume that the coordinate system satisfies the Transversality Assumptions and therefore the family 
\begin{align}\label{eq:linearfamily-b}
F(X,Y,Z,b):= f(X,Y+bZ,Z), 
\end{align}
parameterized by  $b\in \C$ is Zariski equisingular for $b$ small.  
By this assumption the zero set of the discriminant $\Delta_F (X,Y,b)$ of $F$ satisfies the Puiseux with parameter theorem.  The set $F=F'_Z=0$, is the union 
$S_F=\Sigma_F\cup C_F$ of the singular set $\Sigma_F$ of $F$ and the family of the polar curves $C_F$.  
The set $S_F$  consists of finitely many irreducible components parameterized by 
\begin{align}\label{eq:parameterization1}
(u,b) \to (u^n, Y_i ( u,b), Z_i (u,b), b),
\end{align}
with $Y_i, Z_i$ analytic. Then $(u^n, Y = Y_i(u,b), b)$ parameterizes a component of the discriminant locus $\Delta_F = 0$ of $F$.  

The  below key lemma is a version of the first formula on page 278 of \cite{brianconhenry80}
or of a formula on page 465 of \cite{teissierRabida82}.

\begin{lemma}\label{lem:keylemma1}
\begin{align}\label{eq:mainidentity}
Z_i =  - \frac {\partial Y_i}  {\partial b} . 
\end{align}
\end{lemma}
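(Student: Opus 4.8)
\textbf{Proof strategy for Lemma \ref{lem:keylemma1}.}

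The plan is to exploit the defining equations of the polar set and differentiate the identity that expresses that the parameterized curve \eqref{eq:parameterization1} lies on $\{F = F'_Z = 0\}$. First I would record that, along the component in question, we have the two identities
\begin{align*}
F(u^n, Y_i(u,b), Z_i(u,b), b) &\equiv 0,\\
F'_Z(u^n, Y_i(u,b), Z_i(u,b), b) &\equiv 0,
\end{align*}
as identities in $(u,b)$. Since $F(X,Y,Z,b) = f(X, Y+bZ, Z)$, by the chain rule $F'_Z(X,Y,Z,b) = b f'_y(X,Y+bZ,Z) + f'_z(X,Y+bZ,Z)$ and $F'_Y(X,Y,Z,b) = f'_y(X,Y+bZ,Z)$, so that $F'_Z = b F'_Y + (\text{the partial of }F\text{ in the "original" }z)$; the cleanest bookkeeping is to keep everything in terms of $F$ and its formal partials $F'_X, F'_Y, F'_Z, F'_b$, together with the relation $F'_b = Z\cdot f'_y(X,Y+bZ,Z) = Z\cdot F'_Y$ obtained by differentiating $F(X,Y,Z,b)=f(X,Y+bZ,Z)$ with respect to $b$ at fixed $(X,Y,Z)$.

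Next I would differentiate the first identity $F(u^n, Y_i, Z_i, b)\equiv 0$ with respect to $b$, holding $u$ fixed. This gives
$$
F'_Y\cdot \frac{\partial Y_i}{\partial b} + F'_Z\cdot \frac{\partial Z_i}{\partial b} + F'_b = 0,
$$
all partials of $F$ evaluated at $(u^n, Y_i(u,b), Z_i(u,b), b)$. On the polar set the middle term vanishes because $F'_Z = 0$ there, and substituting $F'_b = Z_i\, F'_Y$ yields $F'_Y\cdot\bigl(\frac{\partial Y_i}{\partial b} + Z_i\bigr) = 0$. To conclude $Z_i = -\partial Y_i/\partial b$ it then suffices to know that $F'_Y$ is not identically zero along this component. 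This is where the Transversality Assumptions enter: the generic component of $\{F=F'_Z=0\}$ is a polar curve, not contained in $\Sigma_F$ (and the singular-locus components can be handled separately, or one notes the parameterization is chosen so that the identity makes sense by continuity), so $F'_Y = f'_y$ does not vanish identically on it — indeed the hypothesis that $x=0$ is not a limit of tangent spaces to $\mathcal X_{\mathrm{reg}}$, together with $F'_Z=0$, forces $F'_X$ and hence the gradient to be nonzero and transverse to the $x$-direction, so $(F'_Y, F'_Z)\neq (0,0)$, and with $F'_Z=0$ we get $F'_Y\not\equiv 0$. Dividing by $F'_Y$ in the ring of analytic germs (or on the dense open set where it is nonzero, then by continuity) gives \eqref{eq:mainidentity}.

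The main obstacle I anticipate is purely bookkeeping: making sure the relation $F'_b = Z\,F'_Y$ is applied at the correct point and that the chain-rule computation distinguishing the "formal" partial $F'_Z$ from $f'_z$ is done consistently — the identity is small but it is easy to drop a term or to conflate the slanted coordinate $Y$ with the original $y = Y+bZ$. A secondary point needing care is the justification that one may divide by $F'_Y$: one must argue component by component that the irreducible components carrying actual polar curves are not contained in the non-transversality locus, invoking the Transversality Assumptions and, if necessary, the equimultiplicity remark following them; on the components lying in $\Sigma_F$ the formula should be read as the limiting statement obtained from nearby polar components or verified directly from $f'_y = f'_z = 0$. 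Once these points are settled the computation is immediate.
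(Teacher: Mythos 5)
Your main computation is exactly the paper's: differentiate $F(u^n,Y_i,Z_i,b)\equiv 0$ with respect to $b$, use $F'_Z=0$ and $F'_b=Z_i\,F'_Y$ to obtain $F'_Y\bigl(\partial_b Y_i+Z_i\bigr)\equiv 0$, and divide when $F'_Y\not\equiv 0$ along the branch. Note that in this case no appeal to limits of tangent planes is needed at all: the composed function $f'_y(u^n,Y_i+bZ_i,Z_i)$ either vanishes identically on the branch or it does not, and in the latter case you may cancel it because analytic germs in $(u,b)$ form an integral domain; your detour through ``$x=0$ is not a limit of tangent spaces'' is both garbled (it does not by itself force $(F'_Y,F'_Z)\ne(0,0)$ pointwise) and unnecessary for the polar-curve case.

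The genuine gap is the remaining case. The lemma is asserted for every component of $\{F=F'_Z=0\}$, which includes the branches of $\Sigma_F$, and precisely there $f'_y\equiv 0$ (hence also $f'_z\equiv 0$, since $F'_Z=bf'_y+f'_z=0$), so the identity above degenerates to $0=0$. Neither of your proposed fixes works: differentiating $f(u^n,Y_i+bZ_i,Z_i)\equiv 0$ in $b$ gives $f'_y\,\partial_b(Y_i+bZ_i)+f'_z\,\partial_b Z_i=0$, which is vacuous when $f'_y=f'_z=0$, and there is no ``limiting statement from nearby polar components'' to invoke, since the formula concerns the parameterization of this branch itself. The paper's argument here is a separate geometric step you are missing: such a branch maps, under $(X,Y,Z)\mapsto(X,Y+bZ,Z)$, into $\Sigma_f$, which is independent of $b$, and by the Transversality Assumptions the projection of $\Sigma_f$ to the $x$-axis is finite; hence for fixed $u$ the point $(u^n,y_i(u,b),z_i(u,b))$ ranges in a finite set and is constant in $b$. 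So $y_i=Y_i+bZ_i$ and $Z_i$ are independent of $b$, and $\partial_b Y_i=\partial_b(y_i-bZ_i)=-Z_i$ holds trivially. Without this step the relations \eqref{eq:relations}, which the paper later uses on all branches (singular as well as polar), remain unproven on the singular-locus branches.
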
  

\begin{proof}
We have 
\begin{align}\label{eq:polar}
F (u^n,Y_i,Z_i,b) = 0=F'_Z (u^n,Y_i,Z_i,b) .
\end{align}
 We differentiate the first identity 
with respect to $b$ and use the second one to simplify the result 
\begin{align*}
0 =   F'_Y  \frac {\partial Y_i}  {\partial b}  + F'_Z  \frac {\partial Z_i}  {\partial b}  + 
 F'_b 
 = f'_y (u^n, Y_i+bZ_i, Z_i) \Bigl ( \frac {\partial Y_i}  {\partial b}  +Z_i \Bigr ).
\end{align*}
If $ f'_y (u^n, Y_i+bZ_i, Z_i)\not \equiv 0$ then  the formula \eqref{eq:mainidentity} holds.  Note that in this case  \eqref{eq:parameterization1} parameterizes an irreducible component of  $C_F$.   

If $f'_y (u^n, Y_i+bZ_i, Z_i)\equiv 0$ then, in addition to \eqref{eq:polar},  we have $F'_Y (u^n,Y_i,Z_i,b) = 0$.  Thus in this case  \eqref{eq:parameterization1} parameterizes a component of $\Sigma_F$.    
By the formula 
\begin{align}\label{eq:F'_z}
F'_Z (X,Y,Z,b ) = bf'_y (X,Y + bZ, Z) + f'_z (X,Y + bZ, Z),
\end{align}
$ (X,Y,Z,b)\in \Sigma_F$ if and only if $
(x,y,z) = (X,Y+bZ,Z) \in \Sigma_f$, the singular set of $f$. Thus in this case 
the map 
\begin{align}\label{eq:parameterization2}
(u,b) \to (u^n, y_i (u,b), z_i (u,b)), \qquad y_i  = Y_i +b Z_i, \, z_i= Z_i,
\end{align}
parameterizes a component of $\Sigma_f$.  Moreover,  
by the Transversality Assumptions, the projection of $\Sigma_f$ 
on the $x$-axis is finite. Consequently,  both 
$y_i = Y_i + bZ_i,$ and $Z_i$ are independent of $b$ and \eqref{eq:mainidentity} 
trivially holds.
\end{proof}

We note that, if $ f'_y (u^n, Y_i+bZ_i, Z_i)\not \equiv 0$, i.e. if  
 \eqref{eq:parameterization1} parameterizes a component 
of $C_F$, then \eqref{eq:parameterization2}  parameterizes a 
family of polar curves in $f^{-1}(0)$  defined by the projections $\pi_b$.  
In both cases,  the functions $y_i ( u,b)$, $z_i (u,b)=Z_i(u,b)$, and $Y_i ( u,b)$ are related by  
\begin{align}\label{eq:relations}
z_i = - \partial Y_i/\partial b,  \quad y_i  = Y_i +b z_i, \quad \partial y_i/\partial b = b\partial z_i/\partial b.
\end{align}
In particular, the expansion of $y_i$ cannot have a term linear in $b$. 

By the Zariski equisingularity assumption for any two distinct  branches  $
Y_i(u,b)$, $Y_j(u,b)$ there is $k_{ij}\in \N$ such that 
$Y_i(u,b)- Y_j(u,b) = u^{k_{ij}} unit (u,b)$. Note that, by the transversality with the $y-$axis, we have  $k_{ij}\geq n$ 
. By \eqref{eq:relations} this implies the following result.  

\begin{lemma}\label{lem:contacts1}
For $i\ne j$ There is $k_{ij}\in \N,  \, k_{ij}\geq n,$ such that 
\begin{align}\label{eq:generalformpairs}
& y_i(u,b)- y_j(u,b) = u^{k_{ij}} unit (u,b), \\
\notag
& z_i(u,b)- z_j(u,b) = O(u^{k_{ij}}) .
\end{align}
\end{lemma}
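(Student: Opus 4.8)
The plan is to derive \cref{lem:contacts1} directly from \cref{lem:keylemma1} and the relations \eqref{eq:relations}, using the Zariski equisingularity hypothesis as recalled just before the statement. So I would start from the given fact that for two distinct branches of the discriminant there is $k_{ij}\in\N$ with $Y_i(u,b)-Y_j(u,b) = u^{k_{ij}}\,unit(u,b)$, and with $k_{ij}\ge n$ thanks to the transversality of $\Delta_F=0$ with the $y$-axis (so that the $Y$-components of distinct branches agree to order strictly less than their $x$-ramification index, forcing a gap of at least $n$ in the $u$-exponent). The first step is to translate this into an estimate for $y_i-y_j$. Writing $g_{ij}(u,b):=Y_i(u,b)-Y_j(u,b)=u^{k_{ij}}\,unit(u,b)$ and using $y_i=Y_i+bz_i=Y_i-b\,\partial Y_i/\partial b$ from \eqref{eq:relations}, we get
\begin{align*}
y_i - y_j = g_{ij} - b\,\frac{\partial g_{ij}}{\partial b}.
\end{align*}

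The key point is then that the operator $h\mapsto h - b\,\partial h/\partial b$ does not lower the $u$-adic order: if $h=u^{k}\,v(u,b)$ with $v$ a unit (i.e. $v(0,0)\ne0$), then $h-b\,\partial h/\partial b = u^{k}\bigl(v - b\,\partial v/\partial b\bigr)$, and $v-b\,\partial v/\partial b$ is again a unit because its value at the origin is $v(0,0)\ne 0$. Hence $y_i-y_j = u^{k_{ij}}\,unit(u,b)$, which is exactly the first line of \eqref{eq:generalformpairs}. (One should double-check that the relevant $unit$ really is analytic and nonvanishing at $0$ after any necessary re-ramification in $x=u^n$, but this is exactly the convention fixed in \cref{ss:allowablesectors} and in the \textbf{Notation and conventions} subsection, so no real difficulty arises here.)

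For the second line I would differentiate $g_{ij}=u^{k_{ij}}\,unit(u,b)$ with respect to $b$: since $\partial/\partial b$ acts only on the unit factor, $\partial g_{ij}/\partial b = u^{k_{ij}}\cdot(\text{analytic}) = O(u^{k_{ij}})$. By \eqref{eq:relations}, $z_i - z_j = -\partial Y_i/\partial b + \partial Y_j/\partial b = -\partial g_{ij}/\partial b$, whence $z_i-z_j = O(u^{k_{ij}})$, which is the second line of \eqref{eq:generalformpairs}. Note this estimate is in general not sharp (there is no unit claim for $z_i-z_j$), consistent with the phrasing of the lemma.

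I do not expect a serious obstacle here; the statement is essentially a bookkeeping consequence of \cref{lem:keylemma1}. The one place that requires a little care is the claim $k_{ij}\ge n$: one must invoke transversality of the discriminant curve $\{\Delta_F=0\}$ with the $y$-axis (part of the Transversality Assumptions), which says that each branch $Y_i(u,b)$ has no term of $u$-order $<n$ beyond the common part — equivalently, that two distinct branches, being parameterized with the same ramification $x=u^n$, cannot first differ at $u$-order below $n$ without creating a tangency of $\{\Delta_F=0\}$ to the $y$-axis. The other mild subtlety, already flagged in the paper's conventions, is that one may need to pass to a common multiple of $n$ so that all the parameterizations and all the units involved remain genuinely analytic over the chosen admissible sector; since finitely many branches are involved this is harmless. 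Everything else is the elementary observation that the Euler-type operator $h\mapsto h-b\,h_b$ preserves $u$-adic order and preserves the unit property.
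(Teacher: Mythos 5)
Your proof is correct and follows essentially the same route as the paper: the paper likewise obtains $Y_i-Y_j=u^{k_{ij}}\,unit$ with $k_{ij}\ge n$ from Zariski equisingularity and transversality with the $y$-axis, and then deduces the lemma from the relations \eqref{eq:relations}, which is exactly the computation you spell out (your observation that $h\mapsto h-b\,\partial h/\partial b$ preserves the $u$-order and the unit property is the step the paper leaves implicit). No gaps.
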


 The next result, that we will prove later in the more general parameterized case,  is crucial.  

\begin{prop}\label{prop:quadratic}
There are integers $m_i\in \N, \, m_i\geq n,$ such that  
\begin{align}\label{eq:generalform}
& y_i(u,b) = y_i(u,0) + b^2 u^{m_i} \varphi_i  (u,b), \\
\notag
& z_i(u,b) = z_i(u,0) + b u^{m_i} \psi_i (u,b),
\end{align}
with either $\varphi_i (0,0)\ne 0$, $\psi_i (0,0)\ne 0$ or, if \eqref{eq:parameterization2} parameterizes a component of $\Sigma_f$ then $\varphi_i\equiv \psi_i \equiv 0$.
\end{prop}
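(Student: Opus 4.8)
The plan is to build the expansions \eqref{eq:generalform} directly from the relations \eqref{eq:relations}, namely $z_i = -\partial Y_i/\partial b$, $y_i = Y_i + b z_i$, and $\partial y_i/\partial b = b\,\partial z_i/\partial b$, using only the structure of the Puiseux-with-parameter expansion of $Y_i(u,b)$. The case where \eqref{eq:parameterization2} parameterizes a component of $\Sigma_f$ has already been disposed of in the proof of Lemma \ref{lem:keylemma1}: there $y_i$ and $z_i$ are independent of $b$, so $\varphi_i\equiv\psi_i\equiv 0$ and there is nothing to prove. So I would assume from now on that \eqref{eq:parameterization1} parameterizes a component of $C_F$, i.e. $f'_y(u^n,Y_i+bZ_i,Z_i)\not\equiv 0$.

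First I would record the key structural fact, already noted after \eqref{eq:relations}: the expansion of $y_i(u,b)$ in powers of $b$ has no linear term, because $\partial y_i/\partial b = b\,\partial z_i/\partial b$ vanishes to order $\ge 1$ in $b$. Thus $y_i(u,b) = y_i(u,0) + b^2 A(u,b)$ for some $A$ analytic in $(u,b)$ (after possibly enlarging $n$ so the series is genuinely analytic over the allowable sector, as per the convention in Subsection \ref{ss:allowablesectors}). Differentiating, $\partial y_i/\partial b = 2bA + b^2\partial_b A = b\,\partial z_i/\partial b$, so $\partial z_i/\partial b = 2A + b\,\partial_b A$; integrating in $b$ and using $z_i(u,0)$ as the constant of integration gives $z_i(u,b) = z_i(u,0) + b\,B(u,b)$ with $B(u,b) = 2A(u,b) + $ (a correction coming from integrating $b\,\partial_b A$), and in fact the relation $y_i = Y_i + b z_i$ together with $z_i = -\partial_b Y_i$ pins down $A$ and $B$ in terms of $Y_i$ alone; concretely $A(u,b) = b^{-2}\!\int_0^b\!\int_0^{b'} \partial_{b''}^2 Y_i\, db''\, db'$ type expression, and one checks $B = -\partial_b(bA)\cdot(\text{sign bookkeeping})$ so that $\psi_i$ and $\varphi_i$ share the same lowest-order factor $u^{m_i}$. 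The cleanest route is: write $Y_i(u,b) = Y_i(u,0) + b\,C(u,b)$ (no constraint yet), then $z_i = -\partial_b Y_i = -C - b\,\partial_b C$ and $y_i = Y_i + bz_i = Y_i(u,0) - b^2\partial_b C$; comparing with $\partial y_i/\partial b = b\partial z_i/\partial b$ forces $\partial_b(b^2\partial_b C) $ and $b\,\partial_b(C+b\partial_b C)$ to match, which is automatic, so the real content is just reading off $\varphi_i$ from $-\partial_b C$ and $\psi_i$ from $-(C+b\partial_b C) + C(u,0)$-type reorganization.

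Next I would extract the common exponent $m_i$ and the non-vanishing claim. Let $m_i$ be the order in $u$ of the lowest-order coefficient appearing in $A(u,b)$; set $\varphi_i(u,b) = u^{-m_i}A(u,b)$, so $\varphi_i(0,0)\ne 0$ by construction. The point is then to show that $B(u,b) = z_i(u,b) - z_i(u,0)$, divided by $b$, also has $u$-order exactly $m_i$ and nonzero value at $(0,0)$; this follows because $B$ and $A$ are both built from the single series $C(u,b) = (Y_i(u,b)-Y_i(u,0))/b$ by differentiation and multiplication by $b$, operations that do not change the minimal $u$-exponent of the coefficients — and if they did cancel, then $z_i$ would be independent of $b$, which (via $z_i = Z_i$ and the relation $z_i = -\partial_b Y_i$) would force $Y_i$ to be independent of $b$, contradicting that we are on a polar component with $f'_y\not\equiv 0$. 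Finally, $m_i\ge n$ follows from Lemma \ref{lem:contacts1}: taking $j$ with $k_{ij}\ge n$ minimal among branches, the difference $y_i(u,b)-y_i(u,0)$ inherits, via \eqref{eq:generalformpairs} and the relations, a $u$-exponent bounded below by $n$; more directly, transversality with the $y$-axis (which gave $k_{ij}\ge n$) applies equally to the individual branch since each $Y_i(u,b)$ is transverse to the $y$-axis uniformly in $b$.

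The main obstacle I anticipate is the bookkeeping that ensures $\varphi_i$ and $\psi_i$ carry \emph{the same} exponent $m_i$ with \emph{both} leading coefficients nonzero simultaneously — i.e. ruling out accidental cancellation in one but not the other. This is where one must use that $y_i$ and $z_i$ are not independent functions but are both determined by $Y_i$ through \eqref{eq:relations}, together with the fact that $b^2\partial_b C$ and $b(C+b\partial_b C)$ have matching lowest-order behavior in $u$ precisely when $C$ does; the differentiation $\partial_b$ acts only on the $b$-variable and so preserves the minimal $u$-valuation of the coefficient family. Everything else is a direct manipulation of Puiseux series that is legitimate over the allowable sector after the usual enlargement of $n$.
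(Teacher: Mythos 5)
There is a genuine gap, and it sits exactly at the point your last paragraph flags as "bookkeeping": the claim that $\varphi_i(0,0)\neq 0$ holds "by construction" is not justified. Writing $A(u,b)=\sum_k c_k(b)u^k$ and letting $m_i$ be the smallest $k$ with $c_k\not\equiv 0$, all your construction gives is $c_{m_i}(b)\not\equiv 0$; the content of the proposition is precisely that $c_{m_i}(0)\neq 0$, i.e.\ that the leading $u$-coefficient does not vanish at $b=0$ (and $t=0$ in the parameterized case). Your argument only rules out the coefficient vanishing identically in $b$ (which would make $z_i$ independent of $b$), not its vanishing at the single value $b=0$. And this cannot be repaired by manipulating the relations \eqref{eq:relations} alone, because those relations are satisfied by data for which the conclusion is false: take $Y_i(u,b)=y_0(u)-bz_0(u)-\tfrac{b^3}{3}u^m$, so that $z_i=-\partial_b Y_i=z_0(u)+b^2u^m$ and $y_i=Y_i+bz_i=y_0(u)+\tfrac{2b^3}{3}u^m$. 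All of \eqref{eq:relations} holds, yet $z_i(u,b)-z_i(u,0)=b\,u^m\cdot b$, so $\psi_i(0,0)=0$. Since your proof never uses $f$ beyond the initial case split, it proves a purely formal statement that is false in general; the hypothesis that \eqref{eq:parameterization2} parameterizes the critical locus of the projections $\pi_b$ must enter in an essential way.

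This is exactly what the paper's proof (given for Proposition \ref{prop:quadratic-t}) supplies. One expands $\partial z_i/\partial b=\sum_{k\ge m}a_k(b,t)u^k$ with $a_m\not\equiv 0$ and assumes, for contradiction, $a_m(0,0)=0$; this yields an analytic curve $(b(u),t(u))\to(0,0)$ along which $\partial z_i/\partial b=0$, hence also $\partial y_i/\partial b=0$ by \eqref{eq:relations}. Then one uses the defining identity of the polar family, $f'_z+bf'_y\equiv 0$ along \eqref{eq:parameterization2-t}: differentiating it in $b$ and evaluating along $(u,b(u),t(u))$ kills the terms containing $\partial y/\partial b$ and $\partial z/\partial b$ and leaves $f'_y=0$, so the branch lies in $\Sigma_f$, contradicting the assumption that it is a polar component. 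Note also that the equivalence of the non-vanishing of $\varphi_i(0,0)$ and $\psi_i(0,0)$, and the fact that they share the same $m_i$, then come for free from integrating $\partial z_i/\partial b$ and $b\,\partial z_i/\partial b$ in $b$, so the "common exponent" bookkeeping you worry about is not the real issue. Your observation that $m_i\ge n$ follows from $z_i=O(u^n)$ (transversality) is fine; it is the non-vanishing at the origin that your route cannot reach without invoking $f$.
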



\subsection{Parameterized case} 
We extend the results of the previous subsection to the parameterized case 
 family 
\begin{align}\label{eq:linearfamily-bt}
F(X,Y,Z,b,t):= f(X,Y+bZ,Z,t), 
\end{align}
with $f$ satisfying the Transversality Assumptions.  
Thus $F$ is now Zariski equisingular with respect to the parameters $b$ and $t$ and therefore 
the discriminant 
$\Delta_f (X,Y,b, t)$ of $F$ with respect to $Z$ satisfies the Puiseux with parameter theorem.   
  Similarly to the non-parameterized case,  $S_F=\{F=F'_Z=0\}$ is parameterized by 
 \begin{align}\label{eq:parameterization1-t}
(u,b, t) \to (u^n, Y_i (u,b, t), Z_i (u,b, t), b, t),   
\end{align}
and consists of the singular locus $\Sigma _F$ and a family $C_F$ 
of polar curves, now parameterized  by $b$ and $t$. 

The lemma \ref{lem:keylemma1} still holds (with the same proof) so we have $Z_i =  - \partial Y_i/\partial b$.  Then 
\begin{align}\label{eq:parameterization2-t}
(u,b,t) \to p_i(u,b,t)= (u^n, y_i (u,b,t), z_i (u,b,t),t), \qquad y_i  = Y_i +b Z_i, \, z_i= Z_i.
\end{align}   
parameterize in $\C^3\times \C^l$ the families of polar curves with respect to 
the projections $\pi_b$ with $t$ being a parameter, or the branches of the singular locus $\Sigma_f$.   
The relations \eqref{eq:relations}  are still satisfied. 

 Also the counterpart of Proposition \ref{prop:quadratic} holds.  We give its proof below.  

\begin{prop}\label{prop:quadratic-t}
There are integers $m_i \in \N, \, m_i\geq n,$ and functions $\varphi _i (u,b,t)$, $\psi_i (u,b,t)$ such that 
\begin{align}\label{eq:generalform-t}
& y_i (u,b, t) = y_i(u,0, t) + b^2 u^{m_i} \varphi_i (u,b, t), \\
\notag
& z_i (u,b, t) = z_i(u,0,t) + b u^{m_i} \psi_i (u,b, t). 
\end{align}
Moreover, either $\varphi_i \equiv\psi _i \equiv  0$ if \eqref {eq:parameterization2-t} parameterizes a branch  of $\Sigma_f$ or 
 $\varphi_i(0,0,0)\ne 0$, $\psi_i(0,0,0)\ne 0$ if \eqref {eq:parameterization2-t} parameterizes a family of polar curves.  
\end{prop}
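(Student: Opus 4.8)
The plan is to reduce Proposition \ref{prop:quadratic-t} to Lemma \ref{lem:keylemma1} together with the relations \eqref{eq:relations} and the contact estimates of Lemma \ref{lem:contacts1}. The key identity $z_i = -\partial Y_i/\partial b$ is what forces the extra powers of $b$. First I would expand each branch $Y_i(u,b,t)$ as a convergent power series and isolate its dependence on $b$: write $Y_i(u,b,t) = Y_i(u,0,t) + b\, A_i(u,t) + b^2 R_i(u,b,t)$. From the third relation in \eqref{eq:relations}, namely $\partial y_i/\partial b = b\,\partial z_i/\partial b$ with $y_i = Y_i + bz_i$ and $z_i = -\partial Y_i/\partial b$, one gets $\partial Y_i/\partial b + z_i + b\,\partial z_i/\partial b = b\,\partial z_i/\partial b$, i.e. $\partial Y_i/\partial b = -z_i$ (consistent) and, more importantly, that $y_i$ has no linear-in-$b$ term; this already kills the $b A_i$ contribution in the expansion of $y_i$. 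Tracking this through, $y_i(u,b,t) - y_i(u,0,t)$ is divisible by $b^2$ and $z_i(u,b,t) - z_i(u,0,t) = -(\partial Y_i/\partial b)(u,b,t) + (\partial Y_i/\partial b)(u,0,t)$ is divisible by $b$, which gives the shape in \eqref{eq:generalform-t} with \emph{some} functions $\varphi_i, \psi_i$; the real content is the common exponent $m_i \ge n$ and the non-vanishing (resp. vanishing) dichotomy.

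For the exponent $m_i$ and the estimate, I would use Lemma \ref{lem:contacts1}: for $i \ne j$ one has $y_i - y_j = u^{k_{ij}}\,\mathrm{unit}$ with $k_{ij}\ge n$, and by \eqref{eq:relations} the same exponent controls $z_i - z_j = O(u^{k_{ij}})$. Setting $m_i$ to be the $u$-order of the $b$-part of $z_i$ (equivalently, twice-ish the order appearing in $y_i$, matched using $y_i = Y_i + bz_i$ and the fact that $Y_i - Y_j = u^{k_{ij}}\,\mathrm{unit}$), one checks $m_i \ge n$ comes from the transversality with the $y$-axis (the inequality $k_{ij}\ge n$ and its single-branch analogue). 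The equisingularity of the family $\Delta_b = 0$ in the parameters $b,t$ — the Puiseux-with-parameter theorem applied to $\Delta_F(X,Y,b,t)$ — is what guarantees that the exponents $m_i$ are independent of $t$ and that $\varphi_i, \psi_i$ are honest analytic units when the branch is a polar branch, since the discriminant branch $(u^n, Y_i(u,b,t), b, t)$ must move genuinely with $b$ (otherwise the polar curve would be independent of $b$, forcing $f'_y \equiv 0$ along it, i.e. the $\Sigma_f$ case). For the $\Sigma_f$ branches the argument at the end of the proof of Lemma \ref{lem:keylemma1} already shows $y_i, z_i$ are independent of $b$, so $\varphi_i \equiv \psi_i \equiv 0$; this handles that alternative outright.

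The main obstacle I anticipate is pinning down that $\varphi_i(0,0,0)\ne 0$ and $\psi_i(0,0,0)\ne 0$ in the polar case — i.e., that the leading $b$-dependence of $z_i$ (and the $b^2$-dependence of $y_i$) is exactly of order $u^{m_i}$ and not higher, uniformly in $t$. This requires going back to Key Lemma \ref{lem:keylemma1}'s differentiation argument and examining the second-order behavior in $b$: differentiating $F(u^n, Y_i, Z_i, b, t) = 0$ and $F'_Z(u^n, Y_i, Z_i, b, t)=0$ a second time and using $f'_y \not\equiv 0$ along a polar branch to show $\partial Z_i/\partial b$ does not vanish identically, with the precise $u$-order read off from Lemma \ref{lem:contacts1}. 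I would organize this by first proving the non-parameterized Proposition \ref{prop:quadratic} as a warm-up (as the paper suggests, the parameterized case follows the same lines with $t$ a spectator), then observe that every estimate used — the contact exponents $k_{ij}$, the unit property, the order bounds — holds with $t$-uniform constants by the Puiseux-with-parameter theorem, and finally note that the non-vanishing at $(0,0,0)$ follows from the non-vanishing at $(0,0)$ for $t=0$ by the openness of the non-vanishing locus together with equisingularity (which prevents the leading coefficient from degenerating as $t\to 0$).
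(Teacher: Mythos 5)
Your first two steps are fine and agree with the paper: the relations \eqref{eq:relations} do give that $y_i(u,b,t)-y_i(u,0,t)$ is divisible by $b^2$ and $z_i(u,b,t)-z_i(u,0,t)$ by $b$, the $\Sigma_f$ alternative is handled exactly as you say, and $m_i\ge n$ follows from $z_i=O(u^n)$. The gap is in the only nontrivial part, the unit property $\varphi_i(0,0,0)\ne 0$, $\psi_i(0,0,0)\ne 0$. Writing $\partial z_i/\partial b=\sum_{k\ge m}a_k(b,t)u^k$ with $a_m(b,t)\not\equiv 0$, what must be shown is $a_m(0,0)\ne 0$, i.e.\ that the $u$-order of the $b$-dependence does not jump at the special parameter value $(b,t)=(0,0)$. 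Your first device --- differentiating $F=F'_Z=0$ a second time to conclude that $\partial Z_i/\partial b$ ``does not vanish identically'' --- only reproduces the standing assumption that the branch is polar (i.e.\ $a_m\not\equiv 0$), which is strictly weaker than what is needed; and Lemma \ref{lem:contacts1} cannot supply the order either, since the contacts $k_{ij}$ compare different branches and are related to $m_i$ only a posteriori (Remark \ref{rmk:rem-mi-mj-kij}).

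Your second device --- ``openness of the non-vanishing locus together with equisingularity prevents the leading coefficient from degenerating'' --- is backwards and unsupported: openness propagates non-vanishing from a point to nearby points, whereas here you must pass from non-vanishing at generic $(b,t)$ to non-vanishing at the limit point $(0,0)$; and Zariski equisingularity of the discriminant family controls the contacts among the branches $Y_i$, not the $u$-order of $\partial Y_i/\partial b$ at $(b,t)=(0,0)$ --- ruling out exactly this degeneration is the content of the proposition, so it cannot be invoked as a known consequence of equisingularity. The paper's proof supplies the missing idea: assume $a_m(0,0)=0$, choose a curve $(b(u),t(u))\to(0,0)$ along which $\partial z_i/\partial b=0$ (hence also $\partial y_i/\partial b=0$ by \eqref{eq:relations}), and differentiate the identity $f'_z+bf'_y=0$, valid along \eqref{eq:parameterization2-t}, with respect to $b$; along that curve all terms but $f'_y$ drop out, so $f'_y=0$ there, which places the branch in $\Sigma_f$ and contradicts its nontrivial dependence on $b$. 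Without this (or an equivalent) argument your plan does not close, and note also that the paper proves the parameterized statement directly, with Proposition \ref{prop:quadratic} as the special case $l=0$, rather than deducing the parameterized case from the slice $t=0$ as you propose.
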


\begin{proof}
If $y_i(u,b,t)$ and $z_i(u,b,t)$ are independent of $b$ then \eqref{eq:parameterization2-t} parameterizes a branch of the 
singular locus of $\Sigma_f$.  Therefore we suppose that  one of them, and hence by \eqref{eq:relations} both of them,   depend notrivially on $b$.  
 Expand $\frac{\partial z_i}{\partial b} (u,b,t) = \sum_{k\ge m} a_k(b,t) u^k$ 
 with $a_{m}(b,t)\not \equiv 0$.  To show the result it suffices to show that   
  $a_m (0,0)\ne 0$.

Suppose, by contradiction, that $a_m (0,0)=0$.  Then there exists a solution 
$(b(u), t(u))$, with $(b(0), t(0))=0$, of the equation $\frac{\partial z_i}{\partial b} (u,b, t)=0$. 

By the last identity of \eqref{eq:relations}, $(b(u),t(u))$ 
also solves $\frac{\partial y_i}{\partial b} =0$. Recall that $f'_z + bf'_y$ vanishes identically on \eqref{eq:parameterization2}. Thus computing $\frac{\partial }{\partial b} (f'_z + bf'_y)$ on \eqref{eq:parameterization2-t}, and replacing $(u,b,t)$ by 
$(u,b(u), t(u))$ we get 
\begin{align}
0= \frac{\partial }{\partial b} (f'_z + bf'_y)  = 
 (f''_{zy} + bf''_{yy}) \frac{\partial y}{\partial b}  + 
(f''_{zz}  
 + bf''_{yz})\frac{\partial z}{\partial b} + f'_y= f'_y .
\end{align}
Therefore, in this case, \eqref{eq:parameterization2-t} parameterizes a component of $\Sigma_f$.   
\end{proof}

\begin{cor}\label{cor:unitofY}
\begin{align}\label{eq:generalform-Yt}
Y_i (u,b, t) = y_i(u,b, t) - bz_i (u, b, t) = y_i(u,0, t) - bz_i (u,0, t) + b^2 u^{m_i} 
 unit (u,b, t) . 
\end{align}
\end{cor}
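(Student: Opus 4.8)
The plan is to obtain Corollary~\ref{cor:unitofY} as a direct consequence of Proposition~\ref{prop:quadratic-t} together with the relations~\eqref{eq:relations}. Starting from the parameterization~\eqref{eq:parameterization2-t} one has $Y_i = y_i - b z_i$; substituting the two expansions
\[
y_i(u,b,t) = y_i(u,0,t) + b^2 u^{m_i}\varphi_i(u,b,t), \qquad
z_i(u,b,t) = z_i(u,0,t) + b\, u^{m_i}\psi_i(u,b,t)
\]
furnished by Proposition~\ref{prop:quadratic-t} gives immediately
\[
Y_i(u,b,t) = y_i(u,0,t) - b\,z_i(u,0,t) + b^2 u^{m_i}\bigl(\varphi_i(u,b,t) - \psi_i(u,b,t)\bigr).
\]
So the whole content of the corollary reduces to showing that $\varphi_i - \psi_i$ is a unit in the case where~\eqref{eq:parameterization2-t} parameterizes a family of polar curves; in the singular-locus case $\varphi_i \equiv \psi_i \equiv 0$ by Proposition~\ref{prop:quadratic-t}, so $Y_i = y_i(u,0,t) - b z_i(u,0,t)$ and the formula holds with the $b^2u^{m_i}$-term absent.

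To identify the leading coefficient of $\varphi_i - \psi_i$ I would invoke the third relation in~\eqref{eq:relations}, namely $\partial y_i/\partial b = b\,\partial z_i/\partial b$. Differentiating the two expansions above with respect to $b$ gives $\partial y_i/\partial b = b\,u^{m_i}\bigl(2\varphi_i + b\,\partial_b\varphi_i\bigr)$ and $\partial z_i/\partial b = u^{m_i}\bigl(\psi_i + b\,\partial_b\psi_i\bigr)$, so after cancelling the common factor $b\,u^{m_i}$ the relation forces $2\varphi_i + b\,\partial_b\varphi_i = \psi_i + b\,\partial_b\psi_i$; evaluating at $b=0$ yields $\psi_i(u,0,t) = 2\varphi_i(u,0,t)$. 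Hence $(\varphi_i - \psi_i)(u,0,t) = -\varphi_i(u,0,t)$, and since $\varphi_i(0,0,0)\ne 0$ in the polar curve case, $\varphi_i - \psi_i$ does not vanish at the origin, i.e.\ it is a unit. This establishes~\eqref{eq:generalform-Yt}.

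I do not expect a serious obstacle here: the statement is a formal manipulation of the conclusions of Proposition~\ref{prop:quadratic-t}. The only points requiring a little care are making sure the same exponent $m_i$ appears in all three expressions (this is exactly what Proposition~\ref{prop:quadratic-t} guarantees), and recognizing that the relation $\partial y_i/\partial b = b\,\partial z_i/\partial b$ is precisely what both removes the possible $b$-linear term in $Y_i$ and pins down $\psi_i(u,0,t)$ as $2\varphi_i(u,0,t)$, so that the nonvanishing of the leading coefficient of $\varphi_i$ transfers to that of $\varphi_i - \psi_i$.
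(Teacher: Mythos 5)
Your proof is correct and follows essentially the same route as the paper: substitute the expansions of Proposition~\ref{prop:quadratic-t} into $Y_i=y_i-bz_i$, then differentiate with respect to $b$ and use \eqref{eq:relations} to see that $\varphi_i-\psi_i$ is a unit. The only cosmetic difference is that the paper invokes $z_i=-\partial Y_i/\partial b$ and concludes from $\psi_i$ being a unit, while you use $\partial y_i/\partial b=b\,\partial z_i/\partial b$ to get $\psi_i(u,0,t)=2\varphi_i(u,0,t)$ and conclude from $\varphi_i$ being a unit; the two are equivalent instances of the same computation.
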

\begin{proof}
Using \eqref{eq:generalform-t} we get \\
$Y_i (u,b, t) = y_i(u,b, t) - bz_i (u, b, t) = y_i(u,0, t) - bz_i (u,0, t) + b^2 u^{m_i} (\varphi_i (u,b, t) -\psi_i (u,b, t) ).$  

Differentiating with respect to $b$ and applying 
 \eqref{eq:relations}, we conclude that $(\varphi_i (u,b, t) -\psi_i (u,b, t)) $ is a unit (as $\psi_i$ is unit by \eqref{eq:generalform-t}).

\end{proof}

The following lemma follows from the Zariski equisingularity assumption. 

\begin{lemma}\label{lem:contacts2}
\begin{align}\label{eq:generalformpairs-t}
& y_i(u,b,t)- y_j(u,b,t)  = u^{k_{ij}} unit (u,b,t), \\
\notag
& z_i(u,b,t)- z_j(u,b,t)  = O(u^{k_{ij}}), \\
\notag 
& Y_i(u,b,t)- Y_j(u,b,t)  = u^{k_{ij}} unit (u,b,t), 
\end{align}
and $
y_i(u,b, t) = O(u^n), \quad z_i(u,b,t) = O(u^n)$.
\end{lemma}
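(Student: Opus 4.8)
The plan is to run, in the parameterized setting, exactly the argument that produced Lemma~\ref{lem:contacts1} in the non-parameterized case: extract the contact exponent from the discriminant via Zariski equisingularity, and then transfer the estimates to $y_i,z_i,Y_i$ through the relations \eqref{eq:relations}, which hold verbatim with the extra parameter $t$ present.

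First I would record the input. By the Transversality Assumptions the family $F(X,Y,Z,b,t)=f(X,Y+bZ,Z,t)$ is Zariski equisingular with respect to $(b,t)$, so the discriminant $\Delta_F(X,Y,b,t)$ of $F$ with respect to $Z$ satisfies the Puiseux with parameter theorem. Hence the branches $Y_i(u,b,t)$ of $\Delta_F=0$ occurring in \eqref{eq:parameterization1-t} are analytic, and for $i\ne j$ there is an integer $k_{ij}$ with $Y_i(u,b,t)-Y_j(u,b,t)=u^{k_{ij}}\, unit(u,b,t)$; this is already the third line of \eqref{eq:generalformpairs-t}. Because the Remark following the Transversality Assumptions guarantees that the $y$-axis stays transverse to $\{\Delta_{b}(x,y,t)=0\}$ for all small $b,t$, each branch satisfies $Y_i(u,b,t)=O(u^n)$, and in particular $k_{ij}\ge n$.

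Next I would pass to $y_i,z_i$ by \eqref{eq:relations}, i.e. $z_i=-\partial Y_i/\partial b$ and $y_i=Y_i+bz_i$. Differentiating $Y_i-Y_j=u^{k_{ij}}\, unit(u,b,t)$ with respect to $b$ does not lower the order in $u$ (the variables $u$ and $b$ are independent), so $z_i-z_j=-\partial(Y_i-Y_j)/\partial b=O(u^{k_{ij}})$, which is the second line. Writing $z_i-z_j=u^{k_{ij}}w(u,b,t)$ with $w$ analytic, we then get $y_i-y_j=(Y_i-Y_j)+b(z_i-z_j)=u^{k_{ij}}\bigl(unit(u,b,t)+bw(u,b,t)\bigr)$, and since the bracketed factor equals $unit(0,0,0)\ne0$ at the origin it is again a unit near $0$, giving the first line. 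Finally $z_i=-\partial Y_i/\partial b=O(u^n)$ and $y_i=Y_i+bz_i=O(u^n)$, which is the last assertion of the lemma.

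The one point that genuinely needs care — the closest thing to an obstacle — is the uniformity in the parameters: one must know that the single exponent $k_{ij}$ serves for all small $(b,t)$ and that $k_{ij}\ge n$ throughout. Both facts are exactly what Zariski equisingularity with respect to $(b,t)$ provides, through the Puiseux with parameter theorem and the persistence of transversality recorded in the Remark after the Transversality Assumptions; granting these, the rest is a formal manipulation of \eqref{eq:relations} identical to the non-parameterized proof.
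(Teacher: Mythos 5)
Your proposal is correct and follows the paper's intended argument: the paper derives the contact $Y_i-Y_j=u^{k_{ij}}\,unit$ directly from Zariski equisingularity (Puiseux with parameter), gets $k_{ij}\ge n$ and the $O(u^n)$ bounds from transversality to the $y$-axis, and transfers everything to $y_i,z_i$ via the relations \eqref{eq:relations}, exactly as you do (the paper merely states this without writing out the details). Your write-up just makes explicit the routine verifications (differentiation in $b$ preserving the order in $u$, the bracketed factor being a unit) that the paper leaves implicit.
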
 

\begin{rmk}\label{rmk:rem-mi-mj-kij}
Note that by Proposition \ref{prop:quadratic-t},  $m_i\neq m_j$ implies $k_{ij}\le \text{min}\{m_i, m_j\}$.
\end{rmk}

\begin{lemma}\label{lem:distance_to_Sigma}
Let $p_i(u,0,t)=(u^n, y_i(u,0,t), z_i(u,0,t))$ parameterize a family of polar curves.    
Then $\dist (p_i(u,0,t) , \Sigma_f) \gtrsim |u|^{m_i}$.  
\end{lemma}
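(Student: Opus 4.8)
The plan is to estimate from below the distance from a point $p_i(u,0,t)$ on a polar curve to every branch of the singular locus $\Sigma_f$, using the contact estimates of Lemma \ref{lem:contacts2} together with the quadratic structure of Proposition \ref{prop:quadratic-t}. First I would reduce to comparing $p_i(u,0,t)$ with an arbitrary branch $p_j$ of $\Sigma_f$; since $\Sigma_f$ has finitely many branches, it suffices to show $\dist(p_i(u,0,t), p_j(u',b',t)) \gtrsim |u|^{m_i}$ uniformly, and by finiteness of the projection $\pi|_{\Sigma_f}$ over the $x$-axis (Transversality Assumptions) a branch of $\Sigma_f$ is parameterized with the same ramification variable so that comparison happens along matching values of $u$; the case where the first coordinates $u^n$ differ substantially is trivial since then $|p_i - p_j| \gtrsim |u|^n \gtrsim |u|^{m_i}$ fails only if $m_i = n$ and we must be slightly more careful, but when the $x$-coordinates are comparable the two points sit over (essentially) the same $u$ after reparameterization.

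So assume we compare $p_i(u,0,t) = (u^n, y_i(u,0,t), z_i(u,0,t))$ with $p_j(u,b',t)$ for a branch $p_j$ of $\Sigma_f$, where recall for such a branch $y_j, z_j$ are independent of $b'$ (the last sentence of the proof of Lemma \ref{lem:keylemma1} in the parameterized case, i.e. $\varphi_j \equiv \psi_j \equiv 0$). Then $y_j(u,b',t) = y_j(u,0,t)$ and $z_j(u,b',t) = z_j(u,0,t)$, so it is enough to bound $|y_i(u,0,t) - y_j(u,0,t)|$ or $|z_i(u,0,t) - z_j(u,0,t)|$ from below. By Lemma \ref{lem:contacts2}, $y_i(u,b,t) - y_j(u,b,t) = u^{k_{ij}}\,\mathrm{unit}(u,b,t)$, hence specializing $b = 0$ gives $|y_i(u,0,t) - y_j(u,0,t)| \sim |u|^{k_{ij}}$ (the unit stays a unit at $b=0$). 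Thus $\dist(p_i(u,0,t), p_j) \gtrsim |u|^{k_{ij}}$, and it remains to show $k_{ij} \le m_i$.

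The heart of the argument is therefore the inequality $k_{ij} \le m_i$ whenever $p_i$ is a polar branch and $p_j$ is a branch of $\Sigma_f$. For this I would contrast the $b$-dependence of the two branches. Differentiating with respect to $b$: by Proposition \ref{prop:quadratic-t}, $\partial z_i/\partial b = u^{m_i}\psi_i(u,b,t) + b u^{m_i}\partial_b\psi_i$ with $\psi_i(0,0,0) \ne 0$, so $\partial z_i/\partial b$ has $u$-order exactly $m_i$ at $(b,t)=0$; whereas for the branch $p_j \subset \Sigma_f$ we have $\partial z_j/\partial b \equiv 0$. Hence $\partial_b(z_i - z_j)$ has $u$-order exactly $m_i$. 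On the other hand, from the second line of \eqref{eq:generalformpairs-t}, $z_i(u,b,t) - z_j(u,b,t) = O(u^{k_{ij}})$, and this estimate is stable under $\partial/\partial b$ up to the same order (write the difference as $u^{k_{ij}}$ times a bounded analytic function and differentiate in $b$), giving $\partial_b(z_i - z_j) = O(u^{k_{ij}})$. Comparing the two, $m_i \ge k_{ij}$, which is exactly what we need; combined with the previous paragraph this yields $\dist(p_i(u,0,t),\Sigma_f) \gtrsim |u|^{k_{ij}} \gtrsim |u|^{m_i}$ — wait, the inequality goes the wrong way, so instead I use that the reverse bound is what matters: we want a \emph{lower} bound $|u|^{m_i}$, and we have the lower bound $|u|^{k_{ij}}$ with $k_{ij} \le m_i$, hence $|u|^{k_{ij}} \ge |u|^{m_i}$ for $|u|$ small, giving $\dist(p_i(u,0,t),\Sigma_f) \gtrsim |u|^{m_i}$ as claimed. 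The main obstacle I anticipate is the bookkeeping when the ramification indices of the polar branch and of a singular branch do not literally coincide, forcing a common refinement $u \mapsto u^N$ and a careful check that all the order estimates above rescale consistently; once that is set up, the $\partial/\partial b$ comparison is the clean conceptual step.
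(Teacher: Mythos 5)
Your proof is correct and takes essentially the same route as the paper: both reduce the lemma to showing $k_{ij}\le m_i$ for each branch of $\Sigma_f$ by playing the $b$-independence of the singular branch against the order-$u^{m_i}$ dependence on $b$ of the polar branch coming from Proposition \ref{prop:quadratic-t}, combined with the $b$-uniform contact order $k_{ij}$ given by Zariski equisingularity, and then conclude $\dist \gtrsim |u|^{k_{ij}}\ge |u|^{m_i}$. The only (cosmetic) difference is that the paper reads off $m_i\ge k_{ir}$ from the $y$-coordinate identity $y_i(u,b,t)-\tilde y_r(u,t)=(y_i(u,0,t)-\tilde y_r(u,t))+b^2u^{m_i}\,\mathrm{unit}=u^{k_{ir}}\,\mathrm{unit}$, whereas you differentiate the $z$-difference with respect to $b$.
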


\begin{proof}
Fix a component $\Sigma_r$ of $\Sigma_f$ parameterized by 
$(u^n,\tilde y_r(u,t), \tilde z_r(u,t), t)$.  By Proposition \ref{prop:quadratic} and Zariski equisingularity 
$$
y_i(u,b,t) - \tilde y_r(u,t) = (y_i(u,0,t) - \tilde y_r(u,t)) + u^{m_i} b^2 unit = u^{k_{ir}} unit , 
$$
that is possible only if $m_i\ge k_{ir} \geq n$.  
\end{proof}



\section{Polar wedges} 

In this section we consider the  polar wedges in the sense of 
\cite{BNP2014} and \cite{NPpreprint}.  
The polar wedges are neighbourhoods of the polar curves that play a crucial role in our proof of Theorem 
\ref{thm:maintheorem}.  The formal definition is the following.  

\begin{defi} [Polar wedge]\label{def:polarwedge}
 We call a \emph{polar wedge} and denote it by $\PW_i$  the image of the map $p_i(u,b,t)$ defined by \eqref{eq:parameterization2-t} (for $|b|< \varepsilon$ with $\varepsilon >0$  small), that parameterizes a family of polar curves associated to the projections $\pi_b$. 
\end{defi}

Thus if $p_i(u,b,t)$ of \eqref{eq:parameterization2-t} is independent of $b$, that is it parameterizes a branch of the singular set
$\Sigma_f$, then it does not define a polar wedge. 
Two polar wedges (defined for the same $\varepsilon$) either coincide as sets 
or are disjoint for $u\ne 0$. 
Moreover, either $k_{ij} \le \min\{m_i,m_j\}$ or $k_{ij} > m_i=m_j$.

\subsection {Allowable sectors}\label{ss:allowablesectors}  
Let $\PW_i$ be a polar wedge parameterized by $p_i$  and let $\theta$ be an $n$-th root of unity.  Then $p_i(\theta u,b, t)$ could be identical to $p_i(u,b,t)$ or not, but it always parameterizes the same polar wedge as a set.  In order to avoid confusion 
and also to compare two different polar wedges we work over allowable sectors. 
  We say that a sector $\Xi = \Xi_I= \{u\in \C; \arg u \in I\}$ is \emph{allowable} if the 
 interval $I\subset \R$ is of length strictly smaller than $2\pi/n $.  If we consider only $u\in \Xi$ then 
 $x=u^n\ne 0$ uniquely defines  $u$.  That means that over such an $x,$ every point in the union 
 of polar wedges is attained by a unique parameterization.  

Therefore we may write such parameterization   \eqref{eq:parameterization2-t} in terms of $x,b, t$ assuming implicitly that we work over a sector $\Xi$
\begin{align}\label{eq:wedge-parameterization-x}
p_i(x,b, t) = ( x, y_i(x,b, t), z_i(x,b;t), t) 
\end{align}
with 
\begin{align}\label{eq:generalform-x}
y_i(x,b, t) = y_i(x,0, t) + b^2 x^{m_i/n} \varphi_i (x,b, t) \\
\notag
z _i(x,b, t) = z_i(x,0, t) + b x^{m_i/n} \psi_i (x,b, t). 
\end{align}

\begin{rmk}\label{allowable} We note that any two points in polar wedges 
$p_i (u_1,b_1,t_1)$ and \allowbreak 
$p_j(u_2, b_2, t_2)$ can be compared using parameterizations over the same allowable sector.  Indeed, given nonzero $u_1, u_2$ there always exists  an $n$-th root of unity $\theta$ and an allowable sector $\Xi$  that contains $u_1$ and $\theta u_2$ 
and an index $k$ such that  $p_j(u_2, b_2, t_2)=p_k(\theta u_2, b_2, t_2)$.  
\end{rmk}

\subsection{Distance in polar wedges} 

Having an allowable  sector fixed we show below formulas for the distance between  points inside one polar wedge and 
the distance between  points of different polar wedges.  Note that these formulas imply, in particular, that different 
polar wedges do not intersect outside $T= \{x=y=z=0\}$.  In order to avoid  heavy notation we do not use special symbols
for the restriction of a polar wedge to an allowable sector.  

\begin{prop}\label{prop:distances}
For every polar wedge $\PW_i$  and for $x_1,x_2,b_1,b_2, t_1,t_2$ sufficiently small 
\begin{align}\label{eq:i=j}
\|p_i (x_1,b_1,t_1) -p_i(x_2,b_2,t_2) \| &\sim \max \{|t_1-t_2|, |x_1-x_2|,  |b_1-b_2|  |x_1 |^{m_i/n}  \} \\
\notag 
& \sim \max \{|t_1-t_2|, |x_1-x_2|,  |b_1-b_2|  |x_2 |^{m_i/n}  \} .
\end{align}
For every pair of polar wedges $\PW_i, \PW_j$, if  $k_{ij} \le \min \{m_i, m_j\}$ (in particular if $m_i\ne m_j$) then
\begin{align}\label{eq:ij1}
\|p_i (x_1,b_1,t_1) -p_j(x_2,b_2, t_2) \| & \sim \max \{|t_1-t_2|, |x_1-x_2|,  |x_1 |^{k_{i,j}/n}\} \\
\notag 
& \sim \max \{|t_1-t_2|, |x_1-x_2|,  |x_2 |^{k_{i,j}/n}\}, 
\end{align}
and if $m_i = m_j =m$ then 
\begin{align}\label{eq:ij2}
\|p_i (x_1,b_1, t_1) -p_j(x_2,b_2, t_2) \| & \sim \max \{|t_1-t_2|, |x_1-x_2|,  |x_1 |^{k_{i,j}/n}, |b_1-b_2|  |x_1 |^{m/n}  \} \\
\notag
&\sim \max \{|t_1-t_2|, |x_1-x_2|,  |x_2 |^{k_{i,j}/n}, |b_1-b_2|  |x_2 |^{m/n}   \}.  
\end{align}
\end{prop}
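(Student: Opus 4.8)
The plan is to reduce everything to the explicit parameterizations \eqref{eq:wedge-parameterization-x}--\eqref{eq:generalform-x} and simply estimate each of the three coordinate differences, then take the maximum. Fix an allowable sector $\Xi$ and write $p_i(x_k,b_k,t_k)=(x_k,y_i(x_k,b_k,t_k),z_i(x_k,b_k,t_k),t_k)$ for $k=1,2$. The norm of the difference is comparable to $\max\{|x_1-x_2|,\ |y_i(x_1,b_1,t_1)-y_i(x_2,b_2,t_2)|,\ |z_i(x_1,b_1,t_1)-z_i(x_2,b_2,t_2)|,\ |t_1-t_2|\}$, so the whole proposition amounts to estimating the $y$- and $z$-differences in each of the three cases. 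Throughout I would use the elementary fact that for a fixed unit $U(x,b,t)$ one has $|U(x_1,b_1,t_1)-U(x_2,b_2,t_2)|\lesssim\max\{|x_1-x_2|,|b_1-b_2|,|t_1-t_2|\}$, together with $|x_1^{m/n}-x_2^{m/n}|\lesssim|x_1-x_2|(|x_1|+|x_2|)^{m/n-1}$ (valid on an allowable sector, where the fractional power is well defined and single-valued), and the standing symmetry $|x_1|\sim|x_2|$ whenever $|x_1-x_2|$ is not already the dominant term — which is what makes the two lines of each displayed estimate equivalent.

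For the case $i=j$, equation \eqref{eq:i=j}: split $z_i(x_1,b_1,t_1)-z_i(x_2,b_2,t_2)$ as $(z_i(x_1,0,t_1)-z_i(x_2,0,t_2))+(b_1x_1^{m_i/n}\psi_i(x_1,b_1,t_1)-b_2x_2^{m_i/n}\psi_i(x_2,b_2,t_2))$. The first bracket is $O(\max\{|x_1-x_2|,|t_1-t_2|\})$ by analyticity; the second, after adding and subtracting $b_1x_1^{m_i/n}\psi_i(x_2,b_2,t_2)$ etc., is $\lesssim|b_1-b_2||x_1|^{m_i/n}+(\text{lower order})$, and conversely a single term $b_1x_1^{m_i/n}\psi_i$ forces a genuine $|b_1-b_2||x_1|^{m_i/n}$ contribution when the $b$'s differ and $x_1\sim x_2$. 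The $y_i$-difference, carrying $b^2$ instead of $b$, contributes $\lesssim|b_1^2-b_2^2||x_1|^{m_i/n}\lesssim|b_1-b_2||x_1|^{m_i/n}$ since $|b_k|<\varepsilon$, so it never exceeds the $z$-contribution. Collecting the terms gives exactly the stated max; the lower-bound direction is immediate since each coordinate difference is a lower bound for the norm, and $|x_1-x_2|$ being dominant restores $|x_1|\sim|x_2|$.

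For the two pairs-of-wedges cases I would first recall from Lemma \ref{lem:contacts2} that $y_i(u,b,t)-y_j(u,b,t)=u^{k_{ij}}\,\mathit{unit}$ and $z_i(u,b,t)-z_j(u,b,t)=O(u^{k_{ij}})$, and from Remark \ref{rmk:rem-mi-mj-kij} that $m_i\ne m_j$ forces $k_{ij}\le\min\{m_i,m_j\}$. When $k_{ij}\le\min\{m_i,m_j\}$: write $p_i(x_1,b_1,t_1)-p_j(x_2,b_2,t_2)=(p_i(x_1,b_1,t_1)-p_i(x_2,b_2,t_2))+(p_i(x_2,b_2,t_2)-p_j(x_2,b_2,t_2))$; the first term is controlled by \eqref{eq:i=j}, while the second is $\sim|x_2|^{k_{ij}/n}$ by Lemma \ref{lem:contacts2}. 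Since $m_i\ge k_{ij}$, the $b$-dependent term $|b_1-b_2||x_2|^{m_i/n}$ from the first piece is absorbed into $|x_2|^{k_{ij}/n}$ (using $|b_k|<\varepsilon$), which is why $b$ disappears from \eqref{eq:ij1}; one has to be a little careful to check the lower bound, namely that the $|x|^{k_{ij}/n}$ term is not accidentally cancelled — this is where the \emph{unit} (rather than merely $O$) in the $y$-difference is essential. When $m_i=m_j=m$ but $k_{ij}>m$, the $|x|^{m/n}$ terms in the two wedges' parameterizations no longer cancel to higher order in a way that kills the $b$-dependence, so the $|b_1-b_2||x|^{m/n}$ term survives and must be displayed alongside $|x|^{k_{ij}/n}$, giving \eqref{eq:ij2}; here I would expand $z_i(x_1,b_1,t_1)-z_j(x_2,b_2,t_2)$ directly using \eqref{eq:generalform-x} for both indices, noting $z_i(x,0,t)-z_j(x,0,t)=O(x^{k_{ij}/n})$ and $\psi_i(0,0,0)=\psi_j(0,0,0)\ne0$ so that the leading $b$-terms are $b_1x_1^{m/n}\psi_i(0,0,0)-b_2x_2^{m/n}\psi_j(0,0,0)$ with equal leading coefficients, producing the clean $|b_1-b_2||x|^{m/n}$ after the usual add-subtract manipulation. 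The main obstacle is bookkeeping rather than conceptual: one must keep track of which of the several competing terms ($|x_1-x_2|$, $|x|^{k_{ij}/n}$, $|b_1-b_2||x|^{m/n}$, $|t_1-t_2|$) dominates in each regime, and in particular verify the lower bounds — that no cancellation between the $x$-independent analytic parts and the $x^{m/n}$-parts conspires to shrink the norm below the claimed max — which is exactly what the non-vanishing of $\varphi_i(0,0,0),\psi_i(0,0,0)$ and the \emph{unit} statements in Lemma \ref{lem:contacts2} are there to guarantee.
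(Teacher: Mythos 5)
Your overall strategy is essentially the paper's: work over an allowable sector with the explicit forms \eqref{eq:generalform-x}, estimate the $y$- and $z$-coordinate differences, use Lemma \ref{lem:contacts2} when the indices differ and the bi-Lipschitz behaviour of $b\mapsto b\psi_i(x,b,t)$ for the $b$-variation, and then compare dominant terms. The paper merely organizes the bookkeeping by two reductions (first to $t_1=t_2$, then to $x_1=x_2$, via Lemma \ref{lem:x_1-x_2}), which is what your add--subtract manipulations accomplish; your treatment of \eqref{eq:i=j} and \eqref{eq:ij1}, including the remark that the unit in the $y$-difference and the smallness of $|b_k|<\varepsilon$ relative to the units prevent cancellation, matches the paper's Steps 1--3.

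The weak point is \eqref{eq:ij2}. You expand $z_i(x_1,b_1,t_1)-z_j(x_2,b_2,t_2)$ using \eqref{eq:generalform-x} for \emph{both} indices and justify the clean $(b_1-b_2)x^{m/n}$ term by asserting $\psi_i(0,0,0)=\psi_j(0,0,0)\ne 0$. That equality is nowhere in the paper and needs an argument (it does hold when $k_{ij}>m$: by Lemma \ref{lem:contacts2}, $bu^{m}\bigl(\psi_i(u,b,t)-\psi_j(u,b,t)\bigr)=O(u^{k_{ij}})$ uniformly in $b$, which by analyticity forces $\psi_i-\psi_j=O(u^{k_{ij}-m})$). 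More importantly, equality of the values at the origin alone does not suffice: after your add--subtract step the cross term is $b_2x^{m/n}\bigl(\psi_i(x,b_2,t)-\psi_j(x,b_2,t)\bigr)$, and vanishing of $\psi_i-\psi_j$ at the origin only bounds it by $|b_2|\,|x|^{m/n}\bigl(|x|^{1/n}+|b_2|+|t|\bigr)$; taking for instance $b_1=b_2$, $x_1=x_2$, $t_1=t_2$, the right-hand side of \eqref{eq:ij2} is $|x|^{k_{ij}/n}$, while this bound is only $|x|^{m/n}$ times a small factor, which is far too big when $k_{ij}>m+1$ or when $|b_2|,|t|$ are not small compared with $|x|^{(k_{ij}-m)/n}$. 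So, as written, the upper bound in \eqref{eq:ij2} does not follow. The repair is either to prove and use the full-order statement $\psi_i-\psi_j=O(u^{k_{ij}-m})$, or, as the paper does, to avoid comparing $\psi_i$ with $\psi_j$ altogether by splitting $p_i(x,b_1,t)-p_j(x,b_2,t)=\bigl(p_i(x,b_1,t)-p_j(x,b_1,t)\bigr)+\bigl(p_j(x,b_1,t)-p_j(x,b_2,t)\bigr)$: the first piece is $x^{k_{ij}/n}\cdot \mathrm{unit}$ for $y$ and $O(x^{k_{ij}/n})$ for $z$ by Lemma \ref{lem:contacts2} at the common value $b_1$, and the second is $x^{m/n}(b_1-b_2)\bigl(\mathrm{unit}+O(\|(b_1,b_2)\|)\bigr)$ by the bi-Lipschitz property of $b\mapsto b\psi_j$; the three-case comparison of dominant terms, with $b_1,b_2$ small relative to the units, then goes through exactly as you describe.
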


\begin{cor}\label{cor:comparison}
\begin{align*}
& \|p_i(x_1,b_1, t_1) -p_j(x_2,b_2, t_2) \|\\ & \sim \|p_i(x_1,b_1, t_1) - 
p_j(x_1,b_1, t_1) \|  + 
\|p_j(x_1,b_1, t_1) -p_j(x_2,b_2,t_2) \| .
\end{align*}
\end {cor}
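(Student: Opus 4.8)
The plan is to derive Corollary~\ref{cor:comparison} directly from the distance formulas in Proposition~\ref{prop:distances}, using the elementary fact that for nonnegative reals the "max" metric satisfies a two-sided triangle inequality, i.e. $\max\{a_1,\dots,a_k\}\sim \max\{b_1,\dots,b_k\}+\max\{c_1,\dots,c_k\}$ whenever each $a_i$ is comparable to $\max\{b_i,c_i\}$ after reindexing. More precisely, first I would fix an allowable sector (as in Remark~\ref{allowable}) so that all three points $p_i(x_1,b_1,t_1)$, $p_j(x_1,b_1,t_1)$, $p_j(x_2,b_2,t_2)$ are written by parameterizations over the same sector; the right-hand side of the Corollary then makes literal sense.

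Next I would expand both summands on the right using Proposition~\ref{prop:distances}. The second summand $\|p_j(x_1,b_1,t_1)-p_j(x_2,b_2,t_2)\|$ is governed by \eqref{eq:i=j}, so it is $\sim\max\{|t_1-t_2|,|x_1-x_2|,|b_1-b_2||x_1|^{m_j/n}\}$. The first summand $\|p_i(x_1,b_1,t_1)-p_j(x_1,b_1,t_1)\|$ is a distance between points with \emph{the same} $(x,b,t)=(x_1,b_1,t_1)$: applying \eqref{eq:ij1} or \eqref{eq:ij2} with $x_1=x_2$, $b_1=b_2$, $t_1=t_2$ kills the $|x_1-x_2|$, $|b_1-b_2|$ and $|t_1-t_2|$ terms and leaves $\sim |x_1|^{k_{ij}/n}$ (in both the $k_{ij}\le\min\{m_i,m_j\}$ case and the $m_i=m_j=m$, $k_{ij}>m$ case, since then $k_{ij}/n > m/n$ and the $|b_1-b_2||x_1|^{m/n}$ term is absent anyway). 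So the right-hand side is $\sim\max\{|t_1-t_2|,|x_1-x_2|,|x_1|^{k_{ij}/n},|b_1-b_2||x_1|^{m_j/n}\}$ when $m_i=m_j$, and $\sim\max\{|t_1-t_2|,|x_1-x_2|,|x_1|^{k_{ij}/n}\}$ when $k_{ij}\le\min\{m_i,m_j\}$ with $m_i\ne m_j$ (here I would note $|b_1-b_2||x_1|^{m_j/n}\lesssim |x_1|^{k_{ij}/n}$ because $k_{ij}\le m_j$, so this term is absorbed).

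I would then compare term by term with the formula for $\|p_i(x_1,b_1,t_1)-p_j(x_2,b_2,t_2)\|$ given by \eqref{eq:ij1}, resp. \eqref{eq:ij2}: the two maxima have exactly the same entries, hence are comparable, which is the claim. The case split (whether $m_i=m_j$, whether $k_{ij}\le\min\{m_i,m_j\}$, and the sub-case $k_{ij}>m_i=m_j$ where the two polar wedges coincide as sets and the statement is vacuous or trivial) should be handled explicitly but briefly. One also has to check the degenerate situation $i=j$, where the first summand vanishes and the statement reduces to \eqref{eq:i=j}.

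The only mildly delicate point — the ``main obstacle,'' such as it is — is bookkeeping the exponents: one must make sure that in every case the term $|x_1|^{k_{ij}/n}$ dominates or is dominated correctly relative to $|b_1-b_2||x_1|^{m/n}$, using $k_{ij}\le m$ on the nose when $k_{ij}\le\min\{m_i,m_j\}$, and that replacing $|x_1|$ by $|x_2|$ (as allowed by the two equivalent forms in Proposition~\ref{prop:distances}) does not break the comparison, which it does not since $|x_1|\sim|x_2|$ whenever $|x_1-x_2|$ is small compared to the other terms and is itself one of the terms otherwise. Modulo this routine verification, the Corollary is a formal consequence of Proposition~\ref{prop:distances}.
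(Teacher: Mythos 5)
Your proof is correct and follows the same route the paper intends: the corollary is stated without separate proof precisely because it is, as you show, a term-by-term comparison of the max-formulas \eqref{eq:i=j}, \eqref{eq:ij1}, \eqref{eq:ij2} of Proposition \ref{prop:distances}, with the only point of care being the absorption $|b_1-b_2||x_1|^{m_j/n}\lesssim|x_1|^{k_{ij}/n}$ when $k_{ij}\le\min\{m_i,m_j\}$. One small remark: in the sub-case $k_{ij}>m_i=m_j$ the statement is not vacuous (the two parameterizations are still distinct even if the wedges coincide as sets), but your computation via \eqref{eq:ij2} already covers it, so nothing is missing.
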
 

\begin{cor}\label{cor:lipschitz}{\rm [Lipschitz property]}\\ 
There is $c>0$ such that for all $x_1,x_2,b_1,b_2, t$ sufficiently small 
\begin{align*}
  \| p_i(x_1,b_1, 0) -p_j(x_2,b_2,0) \|
  \le c \|p_i(x_1,b_1,t) -p_j( x_2,b_2,t) \| \\
  \le c^2 \|p_i(x_1,b_1,0) -p_j(x_2,b_2, 0)\| . \end{align*}
\end{cor}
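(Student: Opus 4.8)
The statement to prove is Corollary \ref{cor:lipschitz}, the Lipschitz property asserting that the distance between two points $p_i(x_1,b_1,t)$ and $p_j(x_2,b_2,t)$ in polar wedges, evaluated at parameter $t$, is comparable (up to a uniform constant, with constants independent of $t$) to the same distance evaluated at $t=0$. The plan is to derive this directly from the explicit distance formulas of Proposition \ref{prop:distances} (equations \eqref{eq:i=j}, \eqref{eq:ij1}, \eqref{eq:ij2}). The key observation is that in each of the three cases of that proposition, the quantity $\|p_i(x_1,b_1,t) - p_j(x_2,b_2,t)\|$ is comparable to a maximum of finitely many terms, and crucially \emph{none of those terms involves $t$}: the terms are built out of $|x_1-x_2|$, $|x_1|^{k_{ij}/n}$ (or $|x_1|^{m_i/n}$), and $|b_1-b_2|\,|x_1|^{m/n}$ — plus a term $|t_1-t_2|$ which, when $t_1=t_2=t$, simply vanishes.

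First I would fix a polar wedge $\PW_i$ (respectively a pair $\PW_i,\PW_j$) and apply Proposition \ref{prop:distances} twice: once with the common parameter value $t_1=t_2=t$ and once with $t_1=t_2=0$. In the first application the term $|t_1-t_2|=0$ drops out of the maximum, leaving a maximum $M(x_1,x_2,b_1,b_2)$ of terms that do not depend on $t$ at all; in the second application the term $|t_1-t_2|=0$ again drops out, leaving exactly the same maximum $M(x_1,x_2,b_1,b_2)$. Hence both $\|p_i(x_1,b_1,t)-p_j(x_2,b_2,t)\|$ and $\|p_i(x_1,b_1,0)-p_j(x_2,b_2,0)\|$ are $\sim M(x_1,x_2,b_1,b_2)$, which by transitivity of $\sim$ gives the two-sided inequality with $c$ the product of the implied constants from Proposition \ref{prop:distances}. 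One should be slightly careful that the case distinction ($\PW_i=\PW_j$ as sets; $k_{ij}\le\min\{m_i,m_j\}$; $m_i=m_j$ with $k_{ij}>m_i=m_j$) is the same for the two applications — it is, because the integers $m_i,m_j,k_{ij}$ are intrinsic to the wedges and do not depend on which $t$-slice we look at — and that the "sufficiently small" neighbourhood can be chosen uniformly, which is immediate since Proposition \ref{prop:distances} already provides such a uniform neighbourhood in all variables including $t_1,t_2$.

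I do not anticipate a serious obstacle here; the corollary is essentially a formal consequence of the $t$-independence of the right-hand sides in Proposition \ref{prop:distances}. The one point requiring a line of justification is that the implied constants in the equivalences $\sim$ of Proposition \ref{prop:distances} can be taken independent of $t_1,t_2$ (hence in particular the same for the slices $t$ and $0$); this is part of what that proposition asserts (the estimates hold "for $x_1,x_2,b_1,b_2,t_1,t_2$ sufficiently small" with a single pair of constants), so it may simply be invoked. A cleaner alternative, if one prefers not to re-examine all three cases, is to invoke Corollary \ref{cor:comparison} to reduce to the case $i=j$, $x_1=x_2$, $b_1=b_2$ (comparing $p_i$ to $p_j$ at a fixed point) plus the one-wedge case \eqref{eq:i=j}; but the direct argument via \eqref{eq:i=j}, \eqref{eq:ij1}, \eqref{eq:ij2} is shortest and is the one I would write up.
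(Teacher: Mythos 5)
Your proposal is correct and is exactly the intended derivation: the paper states Corollary \ref{cor:lipschitz} without proof as an immediate consequence of Proposition \ref{prop:distances}, and your argument — setting $t_1=t_2$ so that the right-hand sides of \eqref{eq:i=j}, \eqref{eq:ij1}, \eqref{eq:ij2} reduce to the same $t$-independent maximum, then comparing the $t$-slice with the $0$-slice through that common quantity — is precisely that consequence. Your remarks on the $t$-independence of $m_i,m_j,k_{ij}$ and on the uniformity of the implied constants are the right points to note.
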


\begin{proof}[Proof of Proposition \ref{prop:distances}]
We divide the proof in four steps. In the first two steps we reduce the proofs 
of all \eqref{eq:i=j}, \eqref{eq:ij1}, \eqref{eq:ij2} to simpler cases.  In particular, 
while considering the formula \eqref{eq:i=j} we suppose below that $i=j$.  \\
\textbf{1. First reduction.} \\
We claim that it suffices to prove the formulas \eqref{eq:i=j}, \eqref{eq:ij1}, \eqref{eq:ij2} for $t_1=t_2$.  
This follows from  
\begin{align*}
 \|p_i(x_1,b_1, t_1) -p_j(x_2,b_2, t_2) \| & \sim  |t_1-t_2| + 
  \|p_i(x_1,b_1, t_1) -p_j(x_2,b_2, t_2) \|\\
  &  \sim 
 |t_1-t_2| + \| p_i(x_1,b_1, t_2) -p_j(x_2,b_2, t_2) \|    
\end{align*} 
that we show now.  The first property is obvious, $|t_1-t_2|$ is a part  of 
$ \|p_i(x_1,b_1, t_1) -p_j(x_2,b_2, t_2) \|$.

Secondly, $p_i(x,b, t_1) -p_i(x,b,t_2) = O(t_1-t_2)$ because 
$p_i(u^n,b,t)$ is analytic.  This implies that 
\begin{align*}
&   \|p_i(x_1,b_1, t_1) -p_j(x_2,b_2, t_2) \| \\
&  \le \| p_i(x_1,b_1, t_1) -p_i(x_1,b_1, t_2) \| + \|p_i(x_1,b_1, t_2) -p_j(x_2,b_2, t_2) \| \\
& \lesssim  |t_1-t_2| + \| p_i(x_1,b_1, t_2) -p_j(x_2,b_2, t_2) \| .   
\end{align*} 
A similar computation gives 
$\| p_i(x_1,b_1, t_2) -p_j(x_2,b_2, t_2) \| \lesssim  |t_1-t_2| +  \|p_i(x_1,b_1, t_1) -p_j(x_2,b_2, t_2) \|$. This completes the proof of first reduction claim.   

\noindent
\textbf{2. Second reduction.} \\
We claim that it suffices to show the formulas of the above proposition for the case $t=t_1=t_2, x_1=x_2$.  
The argument is similar to the one above.
The property 
$p_i(x_1 ,b, t) -p_i(x_2,b,t) = O(x_1-x_2)$ follows from 
the following lemma. 

\begin{lemma}\label{lem:x_1-x_2}
We have for each $i$
$$|y_i(u_1,b, t) - y_i(u_2,b, t)| = O(|u_1^n -u_2^n|), \,  |u_1\frac {\partial y_i }{\partial u}  (u_1,b,t) - u_2 \frac {\partial y_i }{\partial u} (u_2,b,t) | = O(|u_1^n -u_2^n|),$$  and similar bounds hold 
for $z_i$ in place of $y_i$.
\end{lemma}

\begin{proof}
If  $(u_1,b, t)$, $(u_2,b, t)$ are in the same allowable sector 
then we have 
\begin{align*}
|u^n_1-u^n_2| \sim |u_1-u_2| \max \{|u_1| ^{n-1}, |u_2|^{n-1}\},  
\end{align*}
that is both sides are comparable up to a constant depending only on the sector. 
Denote $y_i(u,b, t) = u^n \hat y_i(u,b, t)$ and suppose $|u_2|\ge |u_1|$.  Then 
\begin{align*}
& |y_i(u_1,b, t) - y_i(u_2,b, t)| \\
&  \lesssim |(u^n_1-u^n_2)\hat y_i(u_1,b, t)| + 
|u_2^n| |\hat y_i(u_1,b, t) - \hat y_i(u_2,b, t)| \\
& \lesssim |u^n_1-u^n_2| + |u_2^n| |u_1-u_2|\sim |u^n_1-u^n_2| . 
\end{align*}
This shows the first formula; the second one can be shown 
in a similar way. 
 \end{proof} 
\noindent
\textbf{3. Proof of \eqref{eq:i=j} and  \eqref{eq:ij1}.} \\
We assume $t=t_1=t_2, x= x_1=x_2$.  Then  \eqref{eq:i=j} follows from  \eqref{eq:generalform-t}   and the fact that $b\to b\psi(b)$ is bi-Lipschitz ($\psi$ a unit), 
and  \eqref{eq:ij1} follows from 
$$
y_i(x,b_1, t) - y_j(x,b_2, t) = (y_i(x,0,t) - y_j(x,0,t) ) + (b_1^2 x^{m_1/n} \varphi_i(x,b_1,t) - 
b_2^2 x^{m_2/n} \varphi_j (x,b_2,t) )
$$
and a similar formula for $z_i(x,b_1,t) - z_j(x,b_2,t) $.  

\noindent
\textbf{4. Proof of   \eqref{eq:ij2}.} \\
We assume $t=t_1=t_2, x= x_1=x_2$ and  $m=m_1=m_2 $.  Then 
\begin{align}\label{eq:for-y}
y_i(x,b_1, t) - y_j(x,b_2, t)  & = (y_i(x,b_1, t) - y_j(x,b_1, t) ) + (y_j(x,b_1,t) - y_j(x,b_2, t) )  \\
\notag
 & = x^{k_{ij}/n} unit  + x^{m/n} (b_1^2  \varphi_j(x,b_1, t) - 
b_2^2  \varphi_j (x,b_2, t) )  \\
\notag
 & = x^{k_{ij}/n} unit  + x^{m/n} (b_1 -b_2) O(\|(b_1,b_2)\| ). 
\end{align} 
\begin{align}\label{eq:for-z}
z_i(x,b_1, t) - z_j(x,b_2, t)  = O(x^{k_{ij}/n})  + x^{m/n} (b_1 -b_2) (unit  + O(\|(
b_1,b_2)\|) ).
\end{align} 
Now \eqref{eq:ij2} follows from \eqref{eq:for-y}, \eqref{eq:for-z}. Indeed, we may consider separately the three cases:  $ |x |^{k_{i,j}/n} \sim |b_1-b_2|  |x |^{m/n}$, $ |x |^{k_{i,j}/n}$ dominant, and $ |b_1-b_2|  |x |^{m/n}$ dominant,  and suppose that $b_1,b_2$ are small in comparison to the units.    
\end{proof} 



\section{Stratified Lipschitz vector fields on polar wedges}\label{sec:stratlip}

In this section we describe completely the stratified Lipschitz vector 
fields on polar wedges in terms of their parameterizations.  
Note that these descriptions are valid only over allowable sectors, see Remark \ref{allowable}. 

Let $\PW_i$ be a polar wedge parameterized by \eqref{eq:parameterization2-t}.  
We call the polar set $C_i$, parameterized by $p_i(u,t):= p_i(u,0, t)$, \emph{the spine of $\PW_i$}. 
A vector field on 
$\PW_i$ is stratified if it is tangent to the strata: $T$, $C_i\setminus T$, and to $\PW_i\setminus C_i$.

\subsection{Stratified Lipschitz vector fields on a single polar wedge} 

 Let   $p_{i*}(v)$ be a vector field defined on a subset of $\PW_i$, where 
$$
v(u ,b, t) = \alpha (x,b, t)\frac {\partial}{\partial t} + \beta (x,b, t) \frac {\partial}{\partial x} + \delta (x,b, t) \frac {\partial}{\partial b}.\\
$$
We always suppose  the vector field $p_{i*}(v)$ is well defined on $\PW_i$, that is independent of $b$ if $x=0$,
 and it is stratified, that is tangent to $T$ and $C_i\setminus T$;  
 $$p_{i*}(v)=\beta \frac {\partial}{\partial x} +(\beta \frac {\partial y_i}{\partial x} +\delta \frac {\partial y_i}{\partial b} +\alpha \frac {\partial y_i}{\partial t})\frac {\partial}{\partial y} +(\beta \frac {\partial z_i}{\partial x} +\delta \frac {\partial z_i}{\partial b} +\alpha \frac {\partial z_i}{\partial t})\frac {\partial}{\partial z} +\alpha \frac {\partial}{\partial t}.$$

 The independence on $b$ if $x=0$ implies that both $\alpha (0,b,t)$ and $\beta (0,b,t)$ are independent on $b,$ and the actual tangency to $T$ assures that  in fact $\beta (0,b, t)=0$. The tangency to $C_i\setminus T$ implies $\delta (x,0,t)=0$. 
 We also note that 
 $ p_{i*}(\frac {\partial}{\partial b})$ is always zero on $x=0$.

Suppose that a function $h (u,b, t)$ defines a function $\tilde h = h \circ p_i^{-1}$ on $\PW_i$,
 that is $h (0,b, t)$ does not depend on $b$.  Then, after Proposition \ref{prop:distances}, $\tilde h$ is Lipschitz iff 
\begin{align}\label{eq:wedgelipschitz}
|h(u_1,b_1, t_1) -h(u_2,b_2,t_2) | \lesssim |t_1-t_2| + |u^n_1- u^n_2| +  |b_1-b_2|  |u_2 |^{m}   .
\end{align}

\begin{prop} \label{prop:LVF1}
The vector fields $p_{i*} (\frac {\partial}{\partial t})$, $p_{i*} ( u\frac {\partial}{\partial u})$, 
$p_{i*} (b \frac {\partial}{\partial b})$  are stratified Lipschitz on $\PW_i$. \\
\end{prop}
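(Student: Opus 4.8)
The plan is to check directly, for each of the three vector fields, that the coefficient functions $\alpha,\beta,\delta$ in the chart $(u,b,t)$ satisfy the tangency conditions (so that the push-forward is a well-defined stratified vector field on $\PW_i$) and the Lipschitz estimate \eqref{eq:wedgelipschitz} for every component function. For $v=\partial/\partial t$ we have $(\alpha,\beta,\delta)=(1,0,0)$; for $v=u\,\partial/\partial u$ we have $(\alpha,\beta,\delta)=(0,nx,0)$ (since $u\,\partial/\partial u = nx\,\partial/\partial x$); and for $v=b\,\partial/\partial b$ we have $(\alpha,\beta,\delta)=(0,0,b)$. In all three cases $\beta(0,b,t)=0$ and $\alpha$ is independent of $b$, so the field is tangent to $T$; and $\delta(x,0,t)=0$, so it is tangent to $C_i\setminus T$. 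Hence each push-forward is a genuine stratified vector field, and it remains to verify the Lipschitz bounds.

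First I would dispose of $p_{i*}(\partial/\partial t)$. Its components in the $(x,y,z,t)$-coordinates are $(0,\partial y_i/\partial t,\partial z_i/\partial t,1)$. Since $y_i(u^n,b,t)$ and $z_i(u^n,b,t)$ are analytic, $\partial y_i/\partial t$ and $\partial z_i/\partial t$ are analytic in $(u,b,t)$; the $t$-derivative does not spoil the analyticity in $u$, so Lemma \ref{lem:x_1-x_2} applies verbatim to $\partial y_i/\partial t$ and $\partial z_i/\partial t$, giving the $|u_1^n-u_2^n|$ part of \eqref{eq:wedgelipschitz}, while analyticity in $b,t$ gives the $|t_1-t_2|+|b_1-b_2|$ parts (and $|b_1-b_2|\le|b_1-b_2||u_2|^m$ is false — rather, one uses the stronger bound $|b_1-b_2|$ which dominates $|b_1-b_2||u_2|^m$, so \eqref{eq:wedgelipschitz} follows a fortiori). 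One must also check these functions are independent of $b$ when $x=0$: this holds because by \eqref{eq:generalform-x} both $y_i$ and $z_i$ are $b$-independent at $x=0$, hence so are their $t$-derivatives.

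For $p_{i*}(u\,\partial/\partial u)=nx\,\partial/\partial x + (nx\,\partial y_i/\partial x)\,\partial/\partial y + (nx\,\partial z_i/\partial x)\,\partial/\partial z$, the $\partial/\partial x$-component $nx$ is obviously Lipschitz; for the $\partial/\partial y$-component, note $nx\,\partial y_i/\partial x = u\,\partial y_i/\partial u$, and the second estimate of Lemma \ref{lem:x_1-x_2} gives exactly $|u_1(\partial y_i/\partial u)(u_1,b,t) - u_2(\partial y_i/\partial u)(u_2,b,t)| = O(|u_1^n-u_2^n|)$, with the $b,t$-directions handled by analyticity as before; the $\partial/\partial z$-component is identical with $z_i$ in place of $y_i$. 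For $p_{i*}(b\,\partial/\partial b)$, the components are $(0,\ b\,\partial y_i/\partial b,\ b\,\partial z_i/\partial b,\ 0)$. Here I would use \eqref{eq:generalform-x}: $b\,\partial y_i/\partial b = 2b^2 x^{m_i/n}\varphi_i + b^3 x^{m_i/n}\partial\varphi_i/\partial b$ and $b\,\partial z_i/\partial b = b x^{m_i/n}\psi_i + b^2 x^{m_i/n}\partial\psi_i/\partial b$, so each component is of the form $b\,x^{m_i/n}\cdot(\text{analytic in }u,b,t)$. The difference estimate then splits as a difference in $b$ (giving the $|b_1-b_2||u_2|^{m_i}$ term, using that $|u|^{m_i/n}=|x|^{m_i/n}\ge$ is bounded and that a factor $u^{m_i}$ is available) plus a difference in $u$ at fixed $b$ (where the function $b\,u^{m_i}\cdot(\text{analytic})$ is itself analytic in $u$, so Lemma \ref{lem:x_1-x_2}-type reasoning applies) plus a difference in $t$.

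The main obstacle I anticipate is the bookkeeping for $p_{i*}(b\,\partial/\partial b)$: one must be careful that the factor $|u|^{m_i}$ (not just $|u|^{m_i/n}$) is what appears in \eqref{eq:wedgelipschitz}, and that splitting the two-variable increment of $b\,u^{m_i/n}\cdot(\text{unit})$ into its $b$-part and $u$-part produces terms each controlled by the right side of \eqref{eq:wedgelipschitz} — in particular the $u$-increment of $b\,u^{m_i/n}g(u,b,t)$ at fixed $b$ must be bounded by $|b|\,|u_1^n-u_2^n|\cdot(\text{const})$, which follows because $u^{m_i}g$ (recall $m_i\ge n$, so $u^{m_i/n}$ times a unit that is analytic in $u^n$ is analytic in $u$ after absorbing ramification as in the conventions of Subsection \ref{ss:allowablesectors}) is analytic in $u$ and vanishes to order $\ge n$, letting Lemma \ref{lem:x_1-x_2} apply. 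Everything else is routine once \eqref{eq:wedgelipschitz} is taken as the working criterion.
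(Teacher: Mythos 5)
Your overall strategy (coordinate-wise verification of the criterion \eqref{eq:wedgelipschitz}, splitting increments into $t$-, $u$- and $b$-parts, with Lemma \ref{lem:x_1-x_2} handling the $u$-part) is the same as the paper's, and your treatment of $p_{i*}(b\frac{\partial}{\partial b})$ via \eqref{eq:generalform-x} is fine. But there is a genuine error in how you dispose of the $b$-increment for $p_{i*}(\frac{\partial}{\partial t})$ (and, by your ``handled by analyticity as before,'' for $p_{i*}(u\frac{\partial}{\partial u})$ as well). You write that analyticity in $b$ gives a bound $|b_1-b_2|$, which ``dominates'' $|b_1-b_2|\,|u_2|^{m}$, so \eqref{eq:wedgelipschitz} ``follows a fortiori.'' This is backwards: \eqref{eq:wedgelipschitz} demands the \emph{upper} bound $|h(u,b_1,t)-h(u,b_2,t)|\lesssim |b_1-b_2|\,|u|^{m}$, and since $|u|^m\to 0$ this is strictly \emph{stronger} than a bound by $|b_1-b_2|$; the wedge metric contracts the $b$-direction by the factor $|x|^{m/n}$ (Proposition \ref{prop:distances}, \eqref{eq:i=j}), so mere boundedness of the $b$-derivative, i.e.\ plain analyticity in $b$, does not suffice. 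As written, your argument for these two fields does not establish the criterion.

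The gap is fixable, and the fix is exactly the ingredient the paper invokes: Proposition \ref{prop:quadratic-t}. From $y_i(u,b,t)=y_i(u,0,t)+b^2u^{m_i}\varphi_i(u,b,t)$ and $z_i(u,b,t)=z_i(u,0,t)+bu^{m_i}\psi_i(u,b,t)$, the coefficients you must estimate, namely $\frac{\partial y_i}{\partial t}$, $\frac{\partial z_i}{\partial t}$, $u\frac{\partial y_i}{\partial u}$, $u\frac{\partial z_i}{\partial u}$, all have the form (function of $(u,t)$ only) $+\,b\,u^{m_i}\cdot(\text{analytic in }u,b,t)$, so their increments at fixed $(u,t)$ are genuinely $O(|b_1-b_2|\,|u|^{m_i})$, which is what \eqref{eq:wedgelipschitz} requires; the $t$- and $u$-increments are then handled by analyticity and Lemma \ref{lem:x_1-x_2} as you do. In short: replace the ``a fortiori'' step by an appeal to the structure \eqref{eq:generalform-t} (equivalently \eqref{eq:generalform-x}) of the parameterization, and your proof matches the paper's.
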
 

\begin{proof}
We show that each coordinate  of these vector fields is Lipschitz.  For this computation it is more convenient  to use the parameter $u$ instead of $x$ since these vector fields are analytic in $u,b, t$.  For clarity we also drop the index 
$i$ coming from the parameterization \eqref{eq:parameterization2-t}.  

The $t$-coordinate of  $p_{*} (\frac {\partial}{\partial t})$ equals $1=\frac {\partial t}{\partial t}$ and is Lipschitz.  The $x$-coordinate of  $p_{*} (\frac {\partial}{\partial t})$ vanishes identically.  Let us show, using Proposition \ref{prop:quadratic-t} and  Lemma \ref{lem:x_1-x_2}, that 
the $y$-coordinate of  $p_{*} (\frac {\partial}{\partial t})$  is Lipschitz (the argument for the $z$ coordinate  
is similar)  
\begin{align*}
& |\frac {\partial y }{\partial t}  (u_1,b_1, t_1) -  \frac {\partial y }{\partial t} (u_2,b_2,t_2) | \\
& \le |\frac {\partial y }{\partial t}  (u_1,b_1, t_1) -  \frac {\partial y }{\partial t} (u_1,b_1,t_2) | + |\frac {\partial y }{\partial t}  (u_1,b_1,  t_2) -  \frac {\partial y }{\partial t} (u_2,b_1,t_2) | \\ & + |\frac {\partial y }{\partial t}  
(u_2,b_1,t_2) -  \frac {\partial y }{\partial t} (u_2,b_2,t_2) | 
  \lesssim |t_1-t_2| + |u^n_1-u^n_2| +  |b_1-b_2|  |u_2 |^{m} \\&
   \sim \max \{ |t_1-t_2|, |u^n_1-u^n_2|, |b_1-b_2|  |u_2 |^{m}\}.
\end{align*}
A similar computation works for $p_*( x\frac {\partial}{\partial x}) = \frac 1 n p_*( u\frac {\partial}{\partial u})$
\begin{align*}
& |u_1 \frac {\partial y }{\partial u}  (u_1,b_1,t_1) -  u_2 \frac {\partial y }{\partial u} (u_2,b_2,t_2) | \\
& \le |u_1 \frac {\partial y }{\partial u}  (u_1,b_1,t_1) - u_1  \frac {\partial y }{\partial u} (u_1,b_1,t_2) | + |u_1 \frac {\partial y }{\partial u}  (u_1,b_1,t_2) - u_2  \frac {\partial y }{\partial u} (u_2,b_1,t_2) | \\ & + |u_2 \frac {\partial y }{\partial u} 
 (u_2,b_1, t_2) -  u_2 \frac {\partial y }{\partial u} (u_2,b_2,t_2) |   \lesssim |t_1-t_2| + |u_1^n-u_2^n| +  |b_1-b_2|  |u_2 |^{m} \\ & \sim \max \{ |t_1-t_2|, |u^n_1-u^n_2|, |b_1-b_2|  |u_2 |^{m}\} .
\end{align*}
All the other cases can be checked in a similar way.  
\end{proof}

\begin{prop}\label{prop:LVFcriterion1}
The vector field of the form $p_{i*}(v)$, defined on a subset 
$U$ of $\PW_i$ containing $C_i$, 
 is stratified Lipschitz iff the following conditions are sati\-sfied: \\
1) $ \alpha $ satisfies \eqref{eq:wedgelipschitz};\\
2) $|\beta |\lesssim |x|$ and 
$ \beta $ satisfies \eqref{eq:wedgelipschitz};\\
3) $|\delta |\lesssim |b|$ and 
$ \delta x^{m/n}$  satisfies \eqref{eq:wedgelipschitz}.  
\end{prop}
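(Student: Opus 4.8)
The plan is to prove the "iff" by translating the statement "$p_{i*}(v)$ is a stratified Lipschitz vector field on $U$" into coordinatewise conditions via the explicit formula for $p_{i*}(v)$ recalled just before the proposition, and then to read off each condition using the distance estimate \eqref{eq:wedgelipschitz} together with Proposition \ref{prop:distances}. Recall that a function $h(u,b,t)$ descends to a function $\tilde h$ on $\PW_i$ precisely when $h(0,b,t)$ is independent of $b$, and then $\tilde h$ is Lipschitz exactly when \eqref{eq:wedgelipschitz} holds; a vector field on $\PW_i\subset\C^3\times\C^l$ is Lipschitz iff each of its four coordinate functions (the coefficients of $\partial/\partial x,\partial/\partial y,\partial/\partial z,\partial/\partial t$) is a Lipschitz function on $\PW_i$ in this sense. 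Since we already know from Proposition \ref{prop:LVF1} that $p_{i*}(\partial/\partial t)$, $p_{i*}(u\,\partial/\partial u)$ and $p_{i*}(b\,\partial/\partial b)$ are stratified Lipschitz, one direction becomes a linear-algebra bookkeeping exercise and the content is really the other direction together with the growth conditions $|\beta|\lesssim|x|$, $|\delta|\lesssim|b|$.

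First I would address the "only if" direction. Evaluating $p_{i*}(v)$ along the spine $C_i$ (i.e.\ at $b=0$) and on $T$ (i.e.\ at $x=0$): tangency to $T$ and well-definedness force $\beta(0,b,t)=0$ and $\alpha(0,b,t)$, $\beta(0,b,t)$ to be $b$-independent, while tangency to $C_i\setminus T$ forces $\delta(x,0,t)=0$, as already noted in the paragraph preceding the proposition. The $t$-coordinate of $p_{i*}(v)$ is exactly $\alpha$, so Lipschitzness of the vector field gives condition 1) immediately. The $x$-coordinate is $\beta$, giving the second half of 2); the bound $|\beta|\lesssim|x|$ then follows from $\beta(0,b,t)=0$ combined with the Lipschitz estimate \eqref{eq:wedgelipschitz} applied with $u_2=0$ (so $|\beta(u,b,t)|=|\beta(u,b,t)-\beta(0,b,t)|\lesssim|u^n|=|x|$). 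For condition 3), the cleanest route is to observe that $p_{i*}(\partial/\partial b)$ has $x$-coordinate and $t$-coordinate zero and $y$-, $z$-coordinates $\partial y_i/\partial b = b x^{m_i/n}(\cdots)$, $\partial z_i/\partial b = x^{m_i/n}(\text{unit}+\cdots)$ by \eqref{eq:generalform-x}; hence $\delta\, p_{i*}(\partial/\partial b)$ contributes $\delta\,\partial z_i/\partial b\sim \delta x^{m/n}$ (a unit) to the $z$-coordinate. Subtracting the already-Lipschitz pieces $\beta\, p_{i*}(\partial/\partial x)$ and $\alpha\, p_{i*}(\partial/\partial t)$ (using Proposition \ref{prop:LVF1} and parts 1), 2)) from the $z$-coordinate of $p_{i*}(v)$ shows that $\delta x^{m/n}$ times a unit, hence $\delta x^{m/n}$ itself, satisfies \eqref{eq:wedgelipschitz}; the bound $|\delta|\lesssim|b|$ then follows from $\delta(x,0,t)=0$ and \eqref{eq:wedgelipschitz} applied with $b_2=0$, since $|\delta(u,b,t)x^{m/n}| = |\delta(u,b,t)x^{m/n} - \delta(u,0,t)x^{m/n}| \lesssim |b|\,|u|^m$, whence $|\delta|\lesssim|b|$ after dividing by $|x^{m/n}|=|u|^m$.

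For the "if" direction, assume 1)--3). I would decompose $v = \alpha\,\partial/\partial t + \beta\,\partial/\partial x + \delta\,\partial/\partial b$ and correspondingly $p_{i*}(v) = \alpha\, p_{i*}(\partial/\partial t) + \tfrac1n\,(\beta/x)\cdot x\, p_{i*}(\partial/\partial x)\cdot n \,\cdots$ — more carefully, write $\beta\,p_{i*}(\partial/\partial x) = (\beta/x)\cdot p_{i*}(x\,\partial/\partial x)$ and $\delta\,p_{i*}(\partial/\partial b) = (\delta/b)\cdot p_{i*}(b\,\partial/\partial b)$, which makes sense because $\beta/x$ and $\delta/b$ are bounded analytic-type functions by the growth bounds in 2), 3). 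Each of $p_{i*}(\partial/\partial t)$, $p_{i*}(x\,\partial/\partial x)$, $p_{i*}(b\,\partial/\partial b)$ is stratified Lipschitz by Proposition \ref{prop:LVF1}, and its coordinate functions are bounded. So it remains to see that multiplying a bounded stratified Lipschitz vector field by a scalar function that is itself Lipschitz-with-the-$x^{m/n}$-weighted-estimate keeps it Lipschitz. Here the mild subtlety is that the natural multipliers are $\alpha$, $\beta/x$, $\delta/b$ rather than $\alpha,\beta,\delta$, and one checks that $\alpha$ satisfies \eqref{eq:wedgelipschitz} by hypothesis, while $\beta/x$ (resp.\ $(\delta/b)\cdot$ the relevant $x^{m/n}$-weighted coordinate) is Lipschitz on $\PW_i$ because $\beta$ satisfies \eqref{eq:wedgelipschitz} with $|\beta|\lesssim|x|$ (resp.\ $\delta x^{m/n}$ satisfies \eqref{eq:wedgelipschitz} with $|\delta|\lesssim|b|$) — this is a short division-lemma-type argument using that $|x_1-x_2|$ and $|t_1-t_2|$ are controlled by $\|p_i(x_1,b_1,t_1)-p_i(x_2,b_2,t_2)\|$ and that the $x^{m/n}$ factor is already present in the distance formula \eqref{eq:i=j}. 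Finally, the product rule $|f_1g_1 - f_2g_2|\le|f_1||g_1-g_2| + |g_2||f_1-f_2|$ with $f$ bounded-Lipschitz and $g$ bounded assembles the estimate for each coordinate of $p_{i*}(v)$.

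The main obstacle is the third condition and the appearance of the weight $x^{m/n}$: one must be careful that the correct quantity controlled by the Lipschitz property of $p_{i*}(v)$ is $\delta\,\partial z_i/\partial b \sim \delta\, x^{m/n}\cdot(\text{unit})$ rather than $\delta$ alone, and symmetrically that when reconstructing $p_{i*}(v)$ the multiplier is $\delta/b$ — so the bookkeeping has to keep track of which powers of $x$ and $b$ have been absorbed where, and one repeatedly uses that units in the Puiseux expansions \eqref{eq:generalform-x} are harmless over an allowable sector. The rest — condition 1) being literally the $t$-coordinate, condition 2) being the $x$-coordinate plus a vanishing-on-$T$ argument — is routine once the framework (vector field Lipschitz $\iff$ each coordinate satisfies \eqref{eq:wedgelipschitz}) is in place.
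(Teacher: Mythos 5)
Your overall architecture coincides with the paper's: decompose $v$ into the three model fields $p_{i*}(\frac{\partial}{\partial t})$, $p_{i*}(x\frac{\partial}{\partial x})$, $p_{i*}(b\frac{\partial}{\partial b})$, invoke Proposition \ref{prop:LVF1} and the distance formula \eqref{eq:i=j}, and extract the growth bounds $|\beta|\lesssim|x|$, $|\delta|\lesssim|b|$ from tangency/well-definedness together with \eqref{eq:wedgelipschitz}; all of that matches the paper. However, there is a genuine gap at the one analytic step that carries the real content. You claim that $\beta/x$ (and, analogously, the $\delta/b$-type multiplier) \emph{is Lipschitz} on $\PW_i$ because $\beta$ satisfies \eqref{eq:wedgelipschitz} and $|\beta|\lesssim|x|$. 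This implication is false: already in one real variable, $\beta(x)=x\sin(\log x)$ on $(0,1]$ is Lipschitz with $|\beta|\le|x|$, yet $\beta/x=\sin(\log x)$ is not Lipschitz (its difference quotients blow up as $x\to 0$). Your closing assembly, the product rule ``with $f$ bounded-Lipschitz and $g$ bounded,'' is likewise insufficient: the term $|f_1|\,|g_1-g_2|$ is uncontrolled when $g$ is merely bounded, so a Lipschitz function times a bounded function need not be Lipschitz. Note also that your ``only if'' argument for condition 3) subtracts $\beta\,p_{i*}(\frac{\partial}{\partial x})$ and $\alpha\,p_{i*}(\frac{\partial}{\partial t})$ claiming they are Lipschitz as a consequence of 1)--2) and Proposition \ref{prop:LVF1} alone; that subtraction rests on exactly the same unproved multiplication/division step.

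The paper repairs this with Lemma \ref{lem:funny}, which is precisely calibrated to avoid the false intermediate claim: if $h$ is Lipschitz and $f,g$ are Lipschitz with $|f|\lesssim|h|$, $|g|\lesssim|h|$, then the combined quantity $fg/h$ is Lipschitz (one never asserts, and cannot assert, that $f/h$ alone is Lipschitz). Applied with $h=x$, $f=\beta$, and $g$ a coordinate of $p_{i*}(x\frac{\partial}{\partial x})$ (Lipschitz and $O(x)$ by Proposition \ref{prop:LVF1}), this yields the Lipschitzness of $\beta\,p_{i*}(\frac{\partial}{\partial x})$; applied with $h=bx^{m/n}$, $f=\delta x^{m/n}$, $g=p_{i*}(b\frac{\partial}{\partial b})$ (which satisfies $\|p_{i*}(b\frac{\partial}{\partial b})\|\lesssim|b|\,|x|^{m/n}$ by \eqref{eq:generalform-t}), it yields the $\delta$-part of the ``if'' direction; and the converse in 3) uses the same lemma with $f=\delta\,\frac{\partial z_i}{\partial b}$, $g=bx^{m/n}$, $h=b\,\frac{\partial z_i}{\partial b}$. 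Your phrase ``division-lemma-type argument'' points in this direction, but the explicit statements you substitute for it would fail; once Lemma \ref{lem:funny} (or an equivalent two-factor quotient estimate) is stated and proved, your proof becomes the paper's proof.
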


\begin{proof}
If $p_{i*}(v)$ is Lipschitz then so is its $t$-coordinate, that is $\alpha$.  
We claim that if $\alpha$ satisfies \eqref{eq:wedgelipschitz} so do $\alpha \frac {\partial y_i }{\partial t} $ and 
$\alpha \frac {\partial z_i }{\partial t} $.  This follows from  Proposition \ref{prop:LVF1} because the product of two Lipschitz functions is Lipschitz.  This shows that $p_ {i*} (\alpha \frac {\partial  }{\partial t} )$ is Lipschitz.  
By subtracting it from $p_{i*}(v)$ we may assume that $\alpha\equiv 0$.  

If $p_{i*}(v)$ is Lipschitz then so is its $x$-coordinate, that is $\beta $. 
Let $(x,b,t) \in p_i^{-1}(U)$.  Then, by \eqref{eq:i=j} in Proposition \ref{prop:distances} and the Lipschitz property between $p_i(x,b,t)$ and $p_i(0,b,t)$,  we have  $|\beta |\lesssim |x|$ as claimed.

To use a similar argument to the previous "the product of Lipschitz functions is Lipschitz", we need the following elementary generalization.

\begin{lemma}\label{lem:funny}
Suppose $ h:X\to \C$ is a Lipschitz function on a metric space $X$ and let
$L_{h}:=\{ f:X\to \C;  \text{ Lipschitz on X }, |f| \lesssim |h| \}$. If $f, g \in L_{h}$,  then $\xi := fg/h \in L_{h}$ (here $\xi$ is understood to be equal to $0$ on the zero set of $h$).
\end{lemma}

\begin{proof} 
Suppose $|h(q_2)|\ge |h(q_1)|$.  
Then $| fg(q_2) - fg(q_1)|  \lesssim  |h(q_2)|  \dist (q_1,q_2) $ and
\begin{align*}
|\xi (q_2) -\xi (q_1)| &\le \frac{| fg(q_2) h (q_1) - fg(q_1) h (q_2)| }{|h (q_1) h (q_2)| } \\
& \le \frac{| fg(q_2) h (q_1) - fg(q_1) h (q_1)| 
+| fg(q_1) h (q_1) - fg(q_1) h (q_2)| }{|h (q_1) h (q_2)|} \\
& \lesssim \dist (q_1,q_2) . 
\end{align*}
\end{proof}

We apply the above lemma to $ f=\beta$, $ g=p_{i*}( x\frac {\partial}{\partial x})$, 
and $h= x$ respectively,
to complete the proof of  2).    
Thus, by subtracting $p_{i*}( \beta \frac {\partial}{\partial x})$ from $p_{i*}(v)$ we may assume that $\beta \equiv 0$.

Consider now $p_{i*}(\delta \frac {\partial  }{\partial b} ) = 
(0, \delta \frac {\partial  y_i}{\partial b} , \delta \frac {\partial z_i }{\partial b}, 0 )$.  
By Proposition \ref{prop:LVF1},  $p_{i*}(b\frac {\partial  }{\partial b} )$ is Lipschitz and by  \eqref{eq:generalform-t} it satisfies $\|p_{i*}(b \frac {\partial  }{\partial b} )\| \lesssim |b| |x^{m/n}| $.  Therefore if $ \delta x^{m/n}$  satisfies \eqref{eq:wedgelipschitz} then  
$p_{i*}(\delta \frac {\partial  }{\partial b} )$ is 
Lipschitz if we apply Lemma \ref{lem:funny}  to  $f=\delta x^{m/n}$, 
$ g=p_{i*}(b \frac {\partial  }{\partial b} )$, and 
$h=  b x^{m/n}$. 

Conversely, if $p_{i*}(\delta \frac {\partial  }{\partial b} )$ is Lipschitz  so is its $z$-coordinate 
$ \delta \frac {\partial  z_i}{\partial b} $. Moreover, because 
$p_{i*}(\delta \frac {\partial  }{\partial b} )$ is stratified (tangent to 
$C_i$), $ \delta \frac {\partial  z_i}{\partial b} $ is zero if $b=0$.  
Therefore, since $ \frac {\partial  z_i}{\partial b} \sim x^{m/n}$ by 
\eqref{eq:generalform-t} and by (21) in Proposition \ref{prop:distances} and the Lipschitz property between $p_i(x,0,t)$ and $p_i(x,b,t)$,  we have  
$|\delta |\lesssim |b|$.  By  Lemma \ref{lem:funny}  applied to   
$f= \delta \frac {\partial  z_i} {\partial b} $, $g= b 
x^{m/n} $  and 
$h= b  \frac {\partial  z_i}{\partial b}$, we conclude that 
$\delta x^{m/n}$ satisfies \eqref{eq:wedgelipschitz}.   
\end{proof}


\subsection{Lipschitz vector fields on the union of two polar wedges}

Consider two polar wedges $\PW_i$ and $\PW_j$ parameterized by $p_ i (x,b,t) $ and $p_j(x,b,t) ,$ over the same allowable sector, see \ref{ss:allowablesectors} for more details.   

Let $\tilde h$ be a function defined on a subset of $\PW_{i} \cup \PW_{j}$ by two functions  $h_k (x,b,t)$, $k=i,j$. 
 Then, after  Proposition \ref{prop:distances}, $\tilde h$ is Lipschitz iff so are its restrictions $\tilde h_i$ and 
 $\tilde h_j$ to 
 $\PW_i$ and $\PW_j$ respectively, 
 and
\begin{align}\label{eq:twowedgelipschitz}
|h_i(x_1,b_1,t_1) -h_j(x_2,b_2,t_2) | \lesssim |t_1-t_2| + |x_1-x_2| + |x_2|^{k_{ij}/n}+ |b_1-b_2|  |x_2 |^{m/n}   ,
\end{align}
where $m= \min \{m_i, m_j\}$.

\begin{prop} \label{prop:LVF2}
The vector fields given by  $p_{k*}(v)$, $k=i,j$, where $v$ are $\frac {\partial}{\partial t}$, $x \frac {\partial}{\partial x}$, or $b\frac {\partial}{\partial b}$,  are Lipschitz on  $\PW_{i} \cup \PW_{j}$.  
\end{prop}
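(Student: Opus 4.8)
The plan is to invoke the characterization stated just before the proposition: a function on $\PW_i\cup\PW_j$ given by two expressions $h_i,h_j$ is Lipschitz iff its restrictions to $\PW_i$ and to $\PW_j$ are Lipschitz and the cross–estimate \eqref{eq:twowedgelipschitz} holds. The single–wedge part is exactly Proposition \ref{prop:LVF1}, so for each of the three vector fields it only remains to check \eqref{eq:twowedgelipschitz} componentwise. Writing out $p_{k*}(v)$, the nonconstant components are $\partial y_k/\partial t$ and $\partial z_k/\partial t$ for $v=\partial/\partial t$; $x\,\partial y_k/\partial x=\tfrac1n u\,\partial y_k/\partial u$ and $x\,\partial z_k/\partial x$, together with the component $x$, for $v=x\,\partial/\partial x$; and $b\,\partial y_k/\partial b$, $b\,\partial z_k/\partial b$ for $v=b\,\partial/\partial b$. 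The constant components and the component $x$ (whose difference is $x_1-x_2$) obviously satisfy \eqref{eq:twowedgelipschitz}, so the work reduces to proving \eqref{eq:twowedgelipschitz} for $h$ equal to $\partial y_k/\partial t$, $\partial z_k/\partial t$, $u\,\partial y_k/\partial u$, $u\,\partial z_k/\partial u$, $b\,\partial y_k/\partial b$ or $b\,\partial z_k/\partial b$ (with $k=i,j$).

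For each such $h$ I would telescope through the common point $(x_2,b_1,t_2)$,
\begin{align*}
h_i(x_1,b_1,t_1)-h_j(x_2,b_2,t_2)
&=\bigl[h_i(x_1,b_1,t_1)-h_i(x_2,b_1,t_2)\bigr]+\bigl[h_i(x_2,b_1,t_2)-h_j(x_2,b_1,t_2)\bigr]\\
&\quad+\bigl[h_j(x_2,b_1,t_2)-h_j(x_2,b_2,t_2)\bigr].
\end{align*}
The first and third brackets compare two points of a single wedge, so Proposition \ref{prop:LVF1} together with the distance formula \eqref{eq:i=j} bounds them by $O(|t_1-t_2|+|x_1-x_2|)$ and $O(|b_1-b_2|\,|x_2|^{m_j/n})$ respectively; since $m_j\ge m=\min\{m_i,m_j\}$ and $|x_2|$ is small, $|x_2|^{m_j/n}\le|x_2|^{m/n}$, matching the last term of \eqref{eq:twowedgelipschitz}. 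For the middle bracket, which compares $\PW_i$ and $\PW_j$ at the \emph{same} point, I would use Lemma \ref{lem:contacts2}: $y_i-y_j$ and $z_i-z_j$ are analytic and divisible by $u^{k_{ij}}$, so each of $\partial/\partial t$, $u\,\partial/\partial u$ and $b_1\,\partial/\partial b$ applied to them is again $O(u^{k_{ij}})=O(|x_2|^{k_{ij}/n})$ — the Euler field sends $u^{k_{ij}}w$ to $u^{k_{ij}}(k_{ij}w+u\,\partial_u w)$, and the extra bounded factor $|b_1|\le1$ does no harm. Adding the three contributions gives precisely the right–hand side of \eqref{eq:twowedgelipschitz}; note that specializing $x_1=x_2=0$ also re-confirms that the two expressions agree on $T$, so that the glued vector field on $\PW_i\cup\PW_j$ is well defined. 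Everything is done over a fixed allowable sector, where $x=u^n$ and $|x_1-x_2|=|u_1^n-u_2^n|$ are interchangeable, and Lemma \ref{lem:x_1-x_2} underlies the single–wedge estimates entering the first bracket.

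I do not expect a genuine obstacle here: this is the two–wedge analogue of Proposition \ref{prop:LVF1} and serves only as a warm–up to the full description of stratified Lipschitz vector fields on unions of polar wedges. The one point that must be handled carefully is the exponent bookkeeping in the middle bracket — namely that differentiating a function divisible by $u^{k_{ij}}$ in $t$ or $b$ preserves that divisibility and that the Euler field $u\,\partial/\partial u$ does not lower the order of vanishing in $u$ — so that the middle term is controlled by the summand $|x_2|^{k_{ij}/n}$ already present in \eqref{eq:twowedgelipschitz}.
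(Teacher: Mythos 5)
Your proposal is correct and follows essentially the same route as the paper: the paper reduces to the same-point comparison via Corollary \ref{cor:comparison} and Proposition \ref{prop:LVF1} (which is exactly what your telescoping through $(x_2,b_1,t_2)$ carries out explicitly), and then bounds the remaining term by the fact from Lemma \ref{lem:contacts2} that $p_i-p_j=u^{k_{ij}}q(u,b,t)$ with $q$ analytic, so that $\partial/\partial t$, $u\,\partial/\partial u$ and $b\,\partial/\partial b$ preserve the order of vanishing. Your exponent bookkeeping for the middle bracket is precisely the paper's key observation, so there is nothing to add.
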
 

\begin{proof}
By Corollary \ref{cor:comparison} and Propostion \ref{prop:LVF1} it suffices to check only the condition \eqref{eq:twowedgelipschitz} for 
$t=t_1=t_2$, $u=u_1=u_2$, and $b= b_1=b_2$.  In this case the result follows the facts  that $\|p_{i} - p_j \| \lesssim u^{k_{ij}}$ and that 
$(p_{i} - p_{j})(u,b,t) = u^{k_{ij}} q (u,b,t)$, with $q$ analytic,  see Lemma \ref{lem:contacts2}.  
\end{proof}

For $k=i,j$ let   $p_{k*}(v_k)$ be a vector field on a subset of $W_{\Xi, k}$ given by 
 $$
v_k(x,b;t)= \alpha_k \frac {\partial}{\partial t} + \beta_k \frac {\partial}{\partial x} + \delta_k \frac {\partial}{\partial b} .
$$

\begin{prop}\label{prop:LVFcriterion2}
The vector field given by  $p_{k*}(v_k)$, $k=i,j$, defined on a subset 
$U$ of $\PW_{i} \cup \PW_{j}$ containing $C_i\cup C_j$,  is stratified Lipschitz  iff the following conditions are satisfied: \\
 0) each $p_{k*}(v_k)$ is stratified Lipschitz on $U\cap \PW_{k}$; \\ 
1) $ \alpha _i, \alpha_j $ satisfy \eqref{eq:twowedgelipschitz};\\
2) 
$ \beta_i, \beta_ j$  satisfy \eqref{eq:twowedgelipschitz};\\
3) 
$ \delta_i x^{m/n},  \delta_j x^{m/n}$  satisfy \eqref{eq:twowedgelipschitz}.   
\end{prop}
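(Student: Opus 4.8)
The plan is to reduce everything, via Corollary \ref{cor:comparison} and Proposition \ref{prop:LVF2}, to the ``diagonal'' comparison of the two sides with $t_1=t_2=t$, $x_1=x_2=x$, $b_1=b_2=b$, exactly as in the proof of Proposition \ref{prop:LVF2}; the point of the proposition is that the cross-wedge Lipschitz estimate \eqref{eq:twowedgelipschitz} for the pushed-forward field decomposes coordinate by coordinate, and the only new phenomenon (compared to Proposition \ref{prop:LVFcriterion1} on a single wedge) is the mixing of the two parameterizations $p_i$ and $p_j$, which is controlled by $\|p_i(x,b,t)-p_j(x,b,t)\|\lesssim |x|^{k_{ij}/n}$ and, more precisely, by the fact that $(p_i-p_j)(u,b,t)=u^{k_{ij}}q(u,b,t)$ with $q$ analytic (Lemma \ref{lem:contacts2}, combined with \eqref{eq:generalformpairs-t}).

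\textbf{Necessity.} Suppose $p_{k*}(v_k)$, $k=i,j$, glue to a stratified Lipschitz vector field on $U$. Restricting to $U\cap\PW_k$ gives 0). For 1): the $t$-coordinate of the glued field is the pair $(\alpha_i,\alpha_j)$, so it is a Lipschitz function on $U$, hence satisfies \eqref{eq:twowedgelipschitz}. For 2): the $x$-coordinate is $(\beta_i,\beta_j)$, which is therefore Lipschitz on $U$ and satisfies \eqref{eq:twowedgelipschitz} (the bounds $|\beta_k|\lesssim|x|$ are already contained in 0) via Proposition \ref{prop:LVFcriterion1}). For 3): after subtracting $p_{k*}(\alpha_k\partial/\partial t)$ and $p_{k*}(\beta_k\partial/\partial x)$ — which are stratified Lipschitz on $U$ by Proposition \ref{prop:LVF2} together with Lemma \ref{lem:funny} applied as in the proof of Proposition \ref{prop:LVFcriterion1} — we may assume $\alpha_k=\beta_k\equiv0$, so the glued field is $p_{k*}(\delta_k\partial/\partial b)$, whose $z$-coordinate is $\delta_k\,\partial z_k/\partial b$. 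Since $\partial z_k/\partial b\sim x^{m_k/n}$ and this coordinate is Lipschitz on $U$, one runs Lemma \ref{lem:funny} with $f=\delta_k\,\partial z_k/\partial b$, $g=b\,x^{m/n}$, $h=b\,\partial z_k/\partial b$ to deduce that $\delta_k x^{m_k/n}$, hence $\delta_k x^{m/n}$, satisfies the two-wedge estimate \eqref{eq:twowedgelipschitz}; here $m=\min\{m_i,m_j\}$, and one checks that replacing $x^{m_k/n}$ by $x^{m/n}$ only makes the estimate easier when $m_k>m$ (the case $m_i\ne m_j$ forces $k_{ij}\le m=\min\{m_i,m_j\}$ by Remark \ref{rmk:rem-mi-mj-kij}).

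\textbf{Sufficiency.} Conversely, assume 0)--3). By 0) each $p_{k*}(v_k)$ is stratified Lipschitz on its own wedge, so by the criterion displayed before Proposition \ref{prop:LVF2} it remains to verify the cross estimate \eqref{eq:twowedgelipschitz} for the pair of functions giving each of the four ambient coordinates of the glued vector field. By Corollary \ref{cor:comparison} and Proposition \ref{prop:LVF2}, it suffices to do this on the diagonal $x_1=x_2=x$, $b_1=b_2=b$, $t_1=t_2=t$. Writing $p_{k*}(v_k)=p_{k*}(\alpha_k\partial/\partial t)+p_{k*}(\beta_k\partial/\partial x)+p_{k*}(\delta_k\partial/\partial b)$ and treating the three summands separately: for the $\alpha$-part the coordinates are products of $\alpha_k$ with $1,\partial y_k/\partial t,\partial z_k/\partial t$, and on the diagonal $\partial y_i/\partial t-\partial y_j/\partial t = O(x^{k_{ij}/n})$ (differentiate $y_i-y_j=u^{k_{ij}}\,\mathrm{unit}$ from Lemma \ref{lem:contacts2}, using Lemma \ref{lem:x_1-x_2}-type bounds), so a ``sum and difference'' splitting $\alpha_i A_i-\alpha_j A_j=(\alpha_i-\alpha_j)A_i+\alpha_j(A_i-A_j)$ together with 1) and Proposition \ref{prop:LVF1} gives \eqref{eq:twowedgelipschitz}; the $\beta$-part is handled identically using 2) and $|\beta_k|\lesssim|x|$; for the $\delta$-part the nonzero coordinates are $\delta_k\,\partial y_k/\partial b$ and $\delta_k\,\partial z_k/\partial b$, and one rewrites $\delta_k\,\partial z_k/\partial b=(\delta_k x^{m/n})\cdot(x^{-m/n}\partial z_k/\partial b)$ with the second factor a unit times a bounded analytic function whose $i$-vs-$j$ difference on the diagonal is $O(x^{(k_{ij}-m)/n})$ (again from Lemma \ref{lem:contacts2} and Corollary \ref{cor:unitofY}), so the same splitting plus 3) yields \eqref{eq:twowedgelipschitz}; the $y$-component is similar, now using that $\partial y_k/\partial b = b\,\partial z_k/\partial b$ by \eqref{eq:relations}, which supplies the extra factor of $b$ needed to match $|b_1-b_2|\,|x_2|^{m/n}$ on the off-diagonal.

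\textbf{Main obstacle.} The delicate point is the $\delta$-part: one must check that the glued field $p_{k*}(\delta_k\partial/\partial b)$ is genuinely well defined (independent of $b$ at $x=0$) and stratified, and that the estimate \eqref{eq:twowedgelipschitz} survives the replacement of the individual exponents $m_i,m_j$ by $m=\min\{m_i,m_j\}$ — this is where Remark \ref{rmk:rem-mi-mj-kij} and the careful bookkeeping of $k_{ij}$ versus $m$ enter, and where the analyticity of $(p_i-p_j)(u,b,t)/u^{k_{ij}}$ (rather than a mere order estimate) is used to control derivatives in $u$ via Lemma \ref{lem:x_1-x_2}. Once the $\delta$-part is settled, the $\alpha$- and $\beta$-parts are routine by comparison with Proposition \ref{prop:LVFcriterion1} and Proposition \ref{prop:LVF2}.
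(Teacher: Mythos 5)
Your proof is correct and follows essentially the same route as the paper, whose own proof is just the one-line remark that the statement is proved as Proposition \ref{prop:LVFcriterion1}, using Lemma \ref{lem:funny} and Proposition \ref{prop:LVF2}; your write-up is a faithful, more detailed expansion of exactly that (reduction to the diagonal via Corollary \ref{cor:comparison}, coordinatewise splitting, Lemma \ref{lem:funny} for the $\beta$- and $\delta$-parts). One small bookkeeping slip in the necessity of 3): with $g=b\,x^{m/n}$ Lemma \ref{lem:funny} would produce $\delta_k x^{m/n}$ rather than $\delta_k x^{m_k/n}$, and its hypothesis $|g|\lesssim|h|$ fails when $m_k>m$; the clean fix is to take $g$ to be the $z$-coordinate of $p_{k*}(b\,\frac{\partial}{\partial b})$ (so $g\sim b\,x^{m_k/n}$), which first gives that $\delta_k x^{m_k/n}$ satisfies \eqref{eq:twowedgelipschitz}, and then pass from $m_k$ to $m$ exactly by your parenthetical observation that $m_i\ne m_j$ forces $k_{ij}\le m$ (Remark \ref{rmk:rem-mi-mj-kij}) combined with $|\delta_k|\lesssim |b|$.
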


\begin{proof}
The proof is similar to the proof of Proposition \ref{prop:LVFcriterion1} and it is based on   Lemma \ref{lem:funny} and 
Proposition \ref{prop:LVF2}.   
\end{proof}

\begin{rmk}
If $\tilde h_i$, $\tilde h_j$ are stratified Lipschitz on 
 $\PW_i$ and $\PW_j$ respectively, then, by Corollary \ref{cor:comparison}, it suffices to check \eqref{eq:twowedgelipschitz}
 for $t=t_1=t_2$, $u=u_1=u_2$, and $b= b_1=b_2$.  Therefore, in this case, 
 \eqref{eq:twowedgelipschitz} can be replaced by 
 \begin{align}\label{eq:twowedgelipschitzprime}
|h_i(x,b,t) -h_j(x,b,t) | \lesssim |x|^{k_{ij}/n}.
\end{align}
\end{rmk}



\section{Proof of Theorem \ref{thm:maintheorem}.  Part I}\label{sec:proof-partI}

We show the statement of Theorem \ref{thm:maintheorem} on $\PW $, that is 
the union  of the polar wedges and the singular set $\Sigma_f$.  

\subsection{Extension of stratified Lipschitz vector fields on polar wedges in the non parameterized case} \label{ss:lip-strat-nonpar}

Let $X=\{ f(x,y,z)=0\}$,  $S=\{f(x,y,z)=f'_z(x,y,z)=0\}$, 
and $f$ satisfies the Transversality Assumptions.  
We show that $\{\PW\setminus S, S\setminus \{0\}, \{0\} \}$ is 
a Lipschitz stratification of $\PW $  
in the sense of Mostowski.  

Given $q_0\in S\setminus \{0\}$ and a vector $v_0= v(q_0)$ tangent to $S$.  
Suppose $q_0$ belongs to a component $S_i$ (a polar curve or a branch of the singular locus) 
of $S$ parameterized by 
$$
p_i(x) = (x , y_i(x), z_i(x)), \quad q_0= p_i(x_0)  
$$
and $v_0 = p_{i*} (\beta_0 \frac \partial {\partial x}) $.  
Then the vector field on $S$ defined on each $S_j$ by $v_j= p_{j*} (\beta  x\frac \partial {\partial x}) $, with $\beta  = \beta_0/x_0$, defines a Lipschitz extension of $v_0 $.  This shows (L1). 

Consider a stratified Lipschitz vector field $v$ on $S\cup\{q_0\}$ with $q_0= p_i(x_0,b_0) \in \PW_i$ defined by ${p_j}_* v_j$ on the  component $S_j$ of $S$, where 
 $$
v_j(x,b)=  \beta_j \frac {\partial}{\partial x} + \delta_j \frac {\partial}{\partial b} .  
$$
 Thus, for $j\ne i$, the functions $\beta_j$ and $\delta_j$ are defined only for $b=0$ 
 (and hence $\delta_j=0$ since the vector field is stratified). 
  The functions  $\beta_i$ and $\delta_i$ 
 are defined on 
$\{(x,b); b=0\} \cup \{(x_0,b_0)\}$. 
Denote $\beta_0= \beta_i (x_0,b_0)$,    $\delta_0= \delta_i  (x_0,b_0)$.  
By Propositions \ref{prop:LVFcriterion1} and \ref{prop:LVFcriterion2} 
it suffices to extend $\beta_j$ and $\delta_j$ to two families of functions, still denoted by 
 $\beta_j$, $\delta_j$, that satisfy the conditions given in those 
propositions.  
 For all $j$ we define
 \begin{align}\label{eq:extensionbeta}
 &\beta_j (x,b) =  (\beta_0   - \beta_i (x_0,0))\frac b {b_0} 
 \frac {x^{m_j/n}} {{x_0}^{m_i/n}} + \beta_j (x,0),  \\
 \label{eq:extensiongamma}
 &\delta_j  (x,b) = (\delta_0 b)/b_0 .
 \end{align}
Then, because $ |\beta_0   - \beta_i (x_0,0)| \le CL |b_0| |x_0|^ {m_i/n} $, where $L$ is the Lipschitz constant of the vector filed $v$ and $C$ is a universal constant, the first summand of the right-hand side of 
\eqref{eq:extensionbeta} satisfies 2) of Propositions \ref{prop:LVFcriterion1} and \ref{prop:LVFcriterion2}.  
The argument for \eqref{eq:extensiongamma} is similar because $|\delta_0|\le CL |b_0|$.
 This completes the proof of Theorem \ref{thm:maintheorem} for $\PW $ 
 in the non-parameterized case.  \\

\subsection{Parameterized case}
By Corollary \ref{cor:lipschitz} and Propositions \ref{prop:LVFcriterion1}, \ref{prop:LVFcriterion2}, the map given $\mathcal X_0 \times T \to \mathcal X$, 
restricted to $\PW  \cap \mathcal X_0$, 
defined in terms of the parameterizations of polar wedges by 
$$
(p_i(0,x,b),t) \to p_i(x,b,t), 
$$
is not only Lipschitz but also establishes a bijection between the
Lipschitz vector fields. Therefore, by Proposition 
\ref{prop:LipStr-charcterization},   
$\{\PW  \setminus S, S\setminus T, T\}$ is a Lipschitz stratification if and only if so is its intersection with $\mathcal X_0$ and the latter is a Lipschitz stratification by the non-parameterized case. 
We use here an easy observation that the cartesian product of a Lipschitz stratification by a smooth space is also Lipschitz (actually the cartesian product of two 
Lipschitz stratifications is Lipschitz).

\subsection{Examples}
In \cite{mostowski88} Mostowski gives a criterion for the codimension 1 stratum of Lipschitz stratification.  In particular he proposes 
the following example (we change the order of variables so it follows our notation): $f(x,y,z)= z^2- (y^3 + y^2x^2)$. The singular set $\Sigma_f $ of $X=\{f=0\}$ is the $x$-axis but as Mostowski shows 
$\{X \setminus \Sigma_f \,, \Sigma_f \setminus \{0\}, \{0\} \}$ is not a Lipschitz stratification of $X$.  
 By solving the system $f= \partial f/\partial z - b \partial f/\partial y=0$ one can check that there is one polar wedge with $n=1$ and $m=4$ given by 
$$
y= - x^ 2 + b^2x^4 \varphi(x,b), z = 3bx^4 \psi(x,b), 
$$
and one has to add a generic polar curve, or just a curve $y= - x^ 2 + b^2x^4+\cdots, z = 3bx^4+\cdots$, to $\Sigma_f$ to get the one dimensional stratum.  
In \cite[Section 7]{mostowski88} Mostowski studies the case of surface singularities in $\C^3$ and shows in particular the following result.

\begin{prop}\label{prop:notLipschitz}
If $X$ has isolated singularity but  there is an $m_i>n$  then $\{X \setminus  \{0\}, \{0\} \}$ is not 
a Lipschitz stratification of $X$.   
\end{prop}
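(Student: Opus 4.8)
The plan is to contradict condition (L2) of the Lipschitz stratification criterion (Proposition \ref{prop:LipStr-charcterization}) using two points on a polar wedge $\PW_i$ with $m_i > n$. Since $X$ has an isolated singularity, $S$ consists entirely of polar curves (there is no singular branch away from $0$), so the candidate stratification is just $\{X\setminus\{0\}, \{0\}\}$, i.e.\ $l=0$, $T=\{0\}$, and the middle stratum $S\setminus T$ is absent. Thus a stratified Lipschitz vector field on $\{0\}\cup\{q\}$ is simply a choice of vector $v_0$ at $q$ with $v_0 \to 0$ as $q\to 0$ at a controlled rate; the claim that the filtration is Lipschitz would force such a $v_0$ to extend to a stratified Lipschitz vector field on $\{0,q,q'\}$ with comparable constant, for \emph{every} second point $q'\in X\setminus\{0\}$.

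First I would fix a polar wedge $\PW_i$ with $m:=m_i>n$, parameterized by $p_i(x,b)=(x,y_i(x,b),z_i(x,b))$ as in \eqref{eq:wedge-parameterization-x}–\eqref{eq:generalform-x} over an allowable sector. Choose test points $q = p_i(x,0)$ on the spine and $q' = p_i(x,b)$ with $b$ of size $\sim |x|^{\varepsilon}$ for a small exponent $\varepsilon>0$ to be adjusted; by Proposition \ref{prop:distances}, $\|q\| \sim |x|$ while $\dist(q',\text{spine}) \sim |b|\,|x|^{m/n}$. Take at $q$ the vector $v_0 = p_{i*}(|x|\,\partial/\partial x)$ (tangent to the spine, and indeed to $X\setminus\{0\}$); it has norm $\sim |x| = \|q\|$, so as a vector field on $\{0,q\}$ it has bounded Lipschitz constant $L=O(1)$ and bound $K=O(1)$ after normalizing. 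Then I would argue that \emph{any} stratified vector field $v'$ at $q'$ extending $v_0$ with Lipschitz constant $C(L+K)$ forces, via \eqref{eq:i=j} and the description of stratified Lipschitz fields on $\PW_i$ (Propositions \ref{prop:LVF1}, \ref{prop:LVFcriterion1}), that the $b$-component $\delta(x,b)$ of $v'$ satisfies $|\delta|\lesssim|b|$ — but on the other hand the mismatch between $v_0$ being tangent along the spine and the geometry of the wedge in the $z$-direction (where $\partial z_i/\partial b \sim x^{m/n}$ with $m/n>1$) produces a Lipschitz defect of order $|x|^{m/n}/|x| = |x|^{(m-n)/n} \to \infty$ relative to the distance $\|q-q'\|\sim |b|\,|x|^{m/n}$, once $b$ is chosen small enough. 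Concretely, the required derivative-type comparison $|v_0 - v'|/\|q-q'\|$ cannot stay bounded: differentiating along the wedge transfers the $O(1)$-sized spine field into a field whose $z$-coordinate varies like $x\cdot \partial^2 z_i/\partial x\partial b \sim x^{m/n}$, i.e.\ super-linearly small compared to $\|q\|$, which is inconsistent with Lipschitz extension across the gap to $q'$.

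The key computational step — and the main obstacle — is pinning down precisely which two points $q,q'$ and which test vector $v_0$ make the obstruction manifest with a constant independent of $x$; this is exactly the content of Mostowski's analysis in \cite[Section 7]{mostowski88}, so I would follow that argument, reorganized through our Propositions \ref{prop:distances} and \ref{prop:LVFcriterion1}. The cleanest route is probably the contrapositive: assume (L2) holds with constant $C$; apply it along the sequences $q=p_i(x,0)$, $q'=p_i(x,b(x))$ with $b(x)=|x|^{\varepsilon}$, $\varepsilon$ chosen in $(0,(m-n)/n)$; use Proposition \ref{prop:LVFcriterion1} to write the extended field on $\PW_i$ in the normal form $p_{i*}(\alpha\partial_t+\beta\partial_x+\delta\partial_b)$ (with no $t$ here), deduce $|\beta|\lesssim|x|$ and $|\delta|\lesssim|b|$ and the Lipschitz estimate \eqref{eq:wedgelipschitz} for $\beta$ and $\delta x^{m/n}$; and then observe that matching the prescribed value $v_0$ at $q$ together with these constraints forces a contradiction in the estimate \eqref{eq:wedgelipschitz} as $x\to 0$, because the exponent $m/n$ exceeds $1$. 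Once the single-wedge contradiction is set up, no second wedge is needed — the failure is internal to $\PW_i$ — so the remaining work is purely the bookkeeping of exponents, which I would keep brief by citing Proposition \ref{prop:distances} for all the distance asymptotics.
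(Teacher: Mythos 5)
There is a genuine gap, and it lies exactly where you locate ``the main obstacle'': your choice of test data cannot produce a contradiction. You take $q=p_i(x,0)$ \emph{on the spine} and the radial vector $v_0=p_{i*}(|x|\,\partial/\partial x)$. But $p_{i*}(x\,\partial/\partial x)$ is a stratified Lipschitz vector field on all of $\PW_i$ (Proposition \ref{prop:LVF1}), and the extension argument of Subsection \ref{ss:lip-strat-nonpar} shows that a vector $p_{i*}(\beta_0\,\partial/\partial x)$ given at one point of the spine always extends with uniformly controlled constant, e.g.\ by $p_{i*}\bigl((\beta_0/x_0)\,x\,\partial/\partial x\bigr)$. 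So any extension problem starting from your $(q,v_0)$ is solvable with a constant independent of $x$, and no contradiction can emerge, however you tune $b(x)=|x|^{\varepsilon}$. Your heuristic ``defect of order $|x|^{(m-n)/n}\to\infty$'' also has the sign backwards: since $m>n$, $|x|^{(m-n)/n}\to 0$. The true obstruction is not that some quantity blows up relative to $\|q-q'\|$, but that the admissible fields near the polar curve are forced to be \emph{too small} relative to the distance to the origin. Finally, you explicitly defer the decisive computation to Mostowski's paper, so as written the proposal does not contain a proof.

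The paper's argument uses the opposite configuration. Take $q_0=p(x_0,b_0)$ with $b_0\ne 0$ (off the spine) and the vector $v_0=p_*(\partial/\partial b)$, whose norm is $\sim|x_0|^{m/n}$ by \eqref{eq:generalform}. Extending it by $0$ to the origin gives a Lipschitz field on $\{0\}\cup\{q_0\}$ with constant $L\sim x_0^{m/n-1}$, which tends to $0$ precisely because $m>n$; this vanishing $L$ is the whole point. If $\{X\setminus\{0\},\{0\}\}$ were a Lipschitz stratification, any extension to $q_1=p(x_0,b_1)$ would have constant $L_1=C_1L$ with $C_1$ universal. Writing the extension as $p_*(\alpha_1\partial_x+\delta_1\partial_b)$, the $x$-coordinate estimate lets one discard $\alpha_1$, and then the Lipschitz estimates for the $y$- and $z$-coordinates give the $2\times 2$ system \eqref{eq:Cramer}; Cramer's rule (using that $\tilde\varphi,\tilde\psi$ are units) yields $1\lesssim L_1\sim x_0^{m/n-1}$, impossible as $x_0\to 0$. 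If you want to salvage your write-up, replace your test pair $(q,v_0)$ by this off-spine point with the $\partial/\partial b$ vector and carry out the coordinate-matching estimate; the single-wedge framework and the use of Propositions \ref{prop:distances} and \ref{prop:LVFcriterion1} you propose are otherwise compatible with that route.
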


We give below an alternative proof of this proposition.

\begin{proof}
Let $q_0=p(x_0,b_0)\in X \setminus  \{0\}$ be on the polar wedge parameterized by 
$p(x,b)=(x,y(x,b),z(x,b)) $, $x=u^n$, where $y,z$ are as  in  \eqref{eq:generalform}.  Let  $v_0 = p_* ( \frac {\partial}{\partial b})$ be the vector tangent  at $q_0=p(x_0,b_0)$ to $X$.  We extend it by $0$ to 
$\{0\}$ and get a Lipschitz vector field on $\{0\}\cup \{q_0\}$ with Lipschitz constant 
$L=C  x_0^{m/n-1}$, 
where $C>0$ depends only on the polar wedge.   Suppose we extend this vector field 
to $q_1= p(x_1,b_1))$, $x_0=x_1$, by $v_1 = p_* (\alpha_1 \frac {\partial}{\partial x} +  \delta_1 
\frac {\partial}{\partial b}) $ so  that the extended vector field has Lipschitz constant  
$L_1 = C_1  L$.  By  the Lipschitz property of the $x$-coordinate of this vector field 
$|\alpha_1|\le C_1L \|q_0-q_1\| \sim C_1L|b_0-b_1| |x_0|^ {m/n}$. 
Therefore, we can subtract from $v_1$ the vector 
$p_* (\alpha_1 \frac {\partial}{\partial x})$ without changing significantly the Lipschitz constant (just changing $C_1$).  Thus we may assume that $\alpha_1=0$.
  By  the Lipschitz property of the $y$ and $z$-coordinates of this vector field 
\begin{align}\label{eq:Cramer}
& b_0 x_0^{m/n} \tilde \varphi (x_0,b_0) - \delta_1 b_1 x_0^{m/n} \tilde \varphi (x_0,b_1) =   O(|b_0-b_1| x_0^{m/n} ) L_1,   \\ \notag
&  x_0^{m/n} \tilde \psi (x_0,b_0) - \delta_1  x_0^{m/n} \tilde \psi (x_0,b_1) = 
O(|b_0-b_1|x_0^{m/n})L_1 ,
\end{align} 
where $\tilde \varphi, \tilde \psi$ are units.  
Considering \eqref{eq:Cramer} as a system of linear equations with the unknowns $1$ (in front of the first summands of both equations) and $\delta_ 1$,   by Cramer's rule, 
$$1 \lesssim |L_1|\sim |x_0^{m/n-1}|, \quad |\delta_1|  \lesssim |L_1|\sim |x_0^{m/n-1}|,
$$
that is impossible if we allow $x_0\to 0$, as by our assumption $m>n$.
\end{proof}



\section{Quasi-wings} 

Quasi-wings were introduced by Mostowski in \cite[Section 5]{mostowski85} in order to show the existence of Lipschitz stratification in complex analytic case.  
In this construction Mostowski used several co-rank one projections, 
instead of a single one, to cover the whole complement of $\Sigma_f$ 
in $\mathcal X$ by quasi-wings.  We use the quasi-wings to study Lipschitz vector fields on the complement of $\PW $.  \\

The main idea of construction goes as follows (the details will follow later).  Given a real analytic arc $p (s)$, $s\in [0,\varepsilon )$,
of the form 
\begin{align}\label{eq:gammaform}
p(s) =(s^n, y(s), z(s), t(s)), \qquad y(s) = O(s^n), z(s)=O(s^n) .
\end{align}
Our goal is to embed $p(s)$ in a quasi-wing $\QW$ (kind of cuspidal neighborhood of $p(s) $ in $\mathcal X$), that is the graph of a root of $f$ over a set $\W_q$,   
the image of 
\begin{align*}
q(u, v,t) = ( u^n, y(u,t) + u^{\tilde l} v,t), 
\end{align*}
where $u, v\in \C$  are supposed small. Geometrically, $\W_q$ is 
 a cuspidal neighborhood of  $\pi (p(s))$, that we call a wedge,  and 
 $\QW$  is its lift to $\mathcal X$.  
 Thus $\QW$ admits a parameterization of the form  
$p(u, v,t)= (q(u, v,t), z(u,v,t))$ such that $p(s) = p(s, 0,t(s))$. 
We shall make the following assumptions on  $p(s)$ : 
\begin{enumerate} 
\item
	$p(s)$ is not included in $S$ and moreover 
for every polar branch $C_i$ there is an exponent $l_i$ such that 
$s^{l_i}  \sim \dist (p (s), C_i) \sim \dist (\pi(p (s)), \pi (C_i))$.  
A similar assumption is made on every branch of the singular locus 
$\Sigma_f$. 
  In particular  we have $ \dist (p(s), S) \sim \dist (\pi(p (s)), \pi (S))$.  
\item
for every polar branch $C_i$ we have $l_i\le m_i$
(For the definition of $m_i$ see Proposition \ref{prop:quadratic-t}.) This implies that $p(s)$ is not included in $\PW_i$.     
\end{enumerate}
We have  the following requirement on $\QW :$ 
\begin{enumerate}
	\item [(3)]
$
s^{\tilde l}  \lesssim \dist (p (s), S) \sim \dist (\pi(p (s)), \pi (S)),
$\\
that is $\QW$ does not touch $S$ (except along $T$), 
and this property is preserved by the projection to the $t,x,y$-space. 
\end{enumerate}
Then $\PW \cap \QW$ is just the $T$ stratum 
and as we show in  Proposition \ref{prop:deriv-bounded} 
\begin{enumerate}
	\item [(4)]
 $\QW$ 
is the graph of a root of $f$ whose all first order partial derivatives are 
bounded.
In particular, the projection $\QW\to \W_q$ is bi-Lipschitz.   
\end{enumerate} 
In the formal definition of quasi-wings we will require that 
$\tilde l$ is chosen minimal for (3), i.e. 
$
s^{\tilde l}  \sim \dist (p (s), S) \sim \dist (\pi(p (s)), \pi (S)),
$
(we seek the maximal possible set $\W_q$  satisfying the above properties). 
We show in Proposition \ref{prop:existenceq-wing} that each real analytic arc satisfying (1) and (2) can be embedded in a quasi-wing.  
In general, any real analytic arc that is not embedded in the singular locus, satisfies the conditions (1) or (2) after a small linear change of coordinates and therefore can be embedded in a quasi-wing in this new system of coordinates, see Corollary \ref{cor:existenceq-wing}. 
We note that our construction of quasi-wings differs 
significantly from 
the one of Mostowski.   We use the Puiseux with parameter theorem 
and arc-wise analytic trivializations of \cite{PP17}.  The latter also provides 
a crucial partial Lipschitz property, see Remark \ref{rmk:partialLipschitz} that we use in the proof of Proposition \ref{prop:existenceq-wing}. 
Consequently, our construction of quasi-wings can be  extended to the real analytic set-up. 
 Mostowski uses instead the bound on derivatives of holomorphic functions (Schwarz's Lemma).


\subsection{Regular wedges and quasi-wings}

Let $\Delta (x,y,t)$ denote the discriminant of  $f(x,y,z,t)$.  The discriminant locus $\Delta = 0$ is the finite union of families of analytic plane curves
 parameterized by 
\begin{align}\label{eq:discrbranch}
(u,t) \to (u^n, y_i(u,t),t) .
\end{align}
By the Zariski equisingularity assumption we have 
$$y_i(u,t)- y_j(u,t) = u^{k_{ij}} unit (u,t),$$ and by the Transversality Assumptions  
$y_i(u,t) = O(u^n)$.  
Note that $y_i$ of \eqref{eq:discrbranch} is either the projection of a polar branch, 
the one denoted by $y_i(u,0,t)$ in \eqref{eq:generalform-t} and from now on it will be indexed by  $i\in I_C$, or parameterizes the projection of a branch of the singular locus $\Sigma_f$,  and it will be indexed by  $i\in I_\Sigma.$  

Given analytic family of analytic arcs 
\begin{align}\label{param-q}
q(u,t) = (u^n, y(u,t),t) . 
\end{align}
We assume $y(u,t) = O(u^n)$ and that for each discriminant branch \eqref{eq:discrbranch}, $y(u,t)$ satisfies, for some integers $\tilde l_i$,
$$
y(u,t) - y_i(u,t) = u^{\tilde l_i} unit (u,t).
$$
\begin{rmk}
As both $y(u,t) = O(u^n)$ and $y_i(u,t) = O(u^n)$ it follows that $\tilde l_i\geq n.$
\end{rmk}

Consider 
the map
\begin{align}\label{eq:param-q}
q(u, v,t) = ( u^n, y(u,t) + u^{\tilde{l}} v,t),
\end{align}
defined for complex $v$, $|v|< \varepsilon$ with $\varepsilon >0$ small, {and denote its image by $\horn_q$.  
We suppose $\tilde l\ge  \max_i \tilde l_i$, that is the image of $q$, for $u\ne 0$,  is inside the complement of the discriminant locus 
$\Delta=0$.  

\begin{lemma}\label{lem:discriminant}
Let $ g (u,v,z,t) = f(q(u,v,t), z)$.  If $\tilde l\ge  \max_i \tilde l_i$ then the discriminant of $g$ satisfies
\begin{align}\label{eq:discr-g2}
\Delta_g = u^N unit(u,v,t) . 
\end{align}
\end{lemma}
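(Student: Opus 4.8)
The plan is to compute the discriminant of $g(u,v,z,t)=f(q(u,v,t),z)$ by relating it to the discriminant $\Delta(x,y,t)$ of $f$, composed with the map $q$. Since $\Delta_g(u,v,t)$ is, up to a unit, the discriminant of $f$ pulled back along $q$, we have $\Delta_g(u,v,t) = \Delta(u^n, y(u,t)+u^{\tilde l}v, t)\cdot (\text{unit})$, because the discriminant of a composition $f(\phi(w),z)$ with respect to $z$ equals $\Delta_f(\phi(w))$ multiplied by a nonvanishing factor coming from the fact that $q$ does not change the $z$-variable and is a finite map onto its image. So the core task is to show $\Delta(u^n, y(u,t)+u^{\tilde l}v, t) = u^N\,\mathrm{unit}(u,v,t)$ for some $N\in\N$.

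To see this, I would use the factorization of the discriminant locus into branches: $\Delta(x,y,t) = \mathrm{unit}\cdot \prod_i \big(y - y_i(u,t)\big)^{e_i}$ where $x=u^n$ ranges suitably and the product is over the discriminant branches parameterized by \eqref{eq:discrbranch}, with $y_i(u,t)=O(u^n)$. Actually more carefully: along the ramification $x=u^n$, the Weierstrass-type factorization gives $\Delta$ as a unit times a product of factors $y-y_i(\zeta u,t)$ over branches and $n$-th roots of unity $\zeta$. Substituting $y = y(u,t)+u^{\tilde l}v$ into each factor yields $y(u,t) - y_i(\zeta u,t) + u^{\tilde l}v$. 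For $\zeta=1$ this is $u^{\tilde l_i}\,\mathrm{unit}(u,t) + u^{\tilde l}v$, and since $\tilde l \ge \max_i \tilde l_i \ge \tilde l_i$ we can factor out $u^{\tilde l_i}$, leaving $\mathrm{unit}(u,t) + u^{\tilde l - \tilde l_i}v$, which is a unit (its value at $u=0$ is nonzero) — here I use the hypothesis $\tilde l \ge \max_i\tilde l_i$ crucially. For $\zeta\ne 1$, $y(u,t)-y_i(\zeta u,t)$ has some finite order in $u$ (the projections of distinct branches, or a branch and itself under rotation, separate at finite order by Zariski equisingularity and transversality with the $y$-axis), so that factor is again $u^{(\text{some exponent})}\cdot\mathrm{unit}$, possibly absorbing the $u^{\tilde l}v$ term or not depending on which exponent is smaller — in either case it is $u^{(\cdots)}\cdot\mathrm{unit}(u,v,t)$. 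Multiplying all these together collects all the powers of $u$ into a single $u^N$ and all the units into one unit $\mathrm{unit}(u,v,t)$, giving \eqref{eq:discr-g2}.

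The main obstacle I anticipate is making the reduction from "$\Delta_g$" to "$\Delta$ composed with $q$, times a unit" fully rigorous: one must check that introducing the new parameters $(u,v)$ in place of $(x,y)$ — in particular the ramification $x=u^n$ and the affine shift by $u^{\tilde l}v$ — does not introduce spurious zeros or a non-unit Jacobian factor in the discriminant. This is where the hypothesis that $q$ lands in the complement of $\Delta=0$ for $u\ne 0$ matters, together with the fact that the roots of $g$ in $z$ are exactly the roots of $f$ in $z$ evaluated along $q$, so the resultant/discriminant transforms by a power of $u$ (from the ramification) times a unit. I would also need to be slightly careful that "$\mathrm{unit}$" is meant in the appropriate local ring (analytic functions of $(u,v,t)$ near $0$, possibly after restricting to an allowable sector as per \S\ref{ss:allowablesectors}), and that the exponent $N$ does not depend on $v$, which is automatic since setting $v=0$ already gives $\Delta_g(u,0,t)=u^N\mathrm{unit}(u,0,t)$ and the full $\mathrm{unit}(u,v,t)$ does not vanish at the origin. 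The rest is bookkeeping of exponents.
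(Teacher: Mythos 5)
Your route is the same as the paper's: identify $\Delta_g(u,v,t)=\Delta(u^n,\,y(u,t)+u^{\tilde l}v,\,t)$, factor $\Delta(u^n,y,t)$ as a unit times $\prod_i (y-y_i(u,t))^{d_i}$, substitute, and use $\tilde l\ge \tilde l_i$ to see each factor contributes $u^{\tilde l_i}$ times a unit; the paper's proof is exactly these two lines. (Your worry about a Jacobian or spurious unit in the reduction is unnecessary: taking the discriminant of the Weierstrass polynomial of $f$ in $z$ and then substituting the parameters is literally composition, so no extra factor appears.)

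The genuine gap is in your treatment of the conjugate factors $y-y_i(\zeta u,t)$, $\zeta\neq 1$. If the order $a$ of $y(u,t)-y_i(\zeta u,t)$ satisfies $a>\tilde l$, the corresponding factor after substitution is $u^{a}\,\mathrm{unit}+u^{\tilde l}v=u^{\tilde l}\bigl(v+u^{a-\tilde l}\,\mathrm{unit}\bigr)$, and $v+u^{a-\tilde l}\,\mathrm{unit}$ is \emph{not} a unit: it vanishes along a hypersurface through the origin, so your claim that ``in either case it is $u^{(\cdots)}\cdot\mathrm{unit}(u,v,t)$'' fails; geometrically the wedge then meets the discriminant locus for $u\neq 0$ and \eqref{eq:discr-g2} is false. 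A concrete instance: $\Delta=y^2-x^3$, $n=2$, $y(u)=-u^3+u^{10}$, so the contact with the root $y_1(u)=u^3$ is $u^3\cdot\mathrm{unit}$ while the contact with the conjugate $-u^3$ is $u^{10}$; taking $\tilde l=3$ gives $\Delta_g=u^{6}\bigl((v-1+u^{7})^2-1\bigr)=u^{6}(v+u^{7})(v+u^{7}-2)$, which is not $u^{N}\,\mathrm{unit}$. The correct way out is not a case analysis but the hypothesis itself: for the factorization $\Delta(u^n,y,t)=\mathrm{unit}\cdot\prod_i(y-y_i(u,t))^{d_i}$ to hold, the index $i$ in \eqref{eq:discrbranch} must run over \emph{all} Puiseux roots of $\Delta(u^n,y,t)$ (conjugates included), the standing assumption $y(u,t)-y_i(u,t)=u^{\tilde l_i}\,\mathrm{unit}(u,t)$ then applies to each of them, and $\tilde l\ge\max_i\tilde l_i$ turns every factor into $u^{\tilde l_i}\bigl(\mathrm{unit}(u,t)+u^{\tilde l-\tilde l_i}v\bigr)$, a unit times $u^{\tilde l_i}$ for $|v|$ small. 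Without invoking the hypothesis for the conjugate factors as well, the statement cannot be proved, as the example shows.
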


\begin{proof}
Write the discriminant of $f$
$$
\Delta (u^n, y, t) = unit (u,y,t) \prod_i (y-y_i (u,t))^{d_i}.
$$
Then, by assumption $\tilde l\ge  \max_i \tilde l_i$,
$$
\Delta_g (u,v,t) = \Delta (u^n, y (u,t) + v u^{\tilde l}, t) = 
 \, u^{\sum \tilde l_i d_i} unit (u,v,t) .
$$
 \end{proof}

Therefore, by Puiseux with parameter theorem, after a ramification in $u$, we may assume that the roots of $g$ are analytic functions of the form 
$z_\tau (u,v,t) = {z}_\tau ( u^n, y(u,t) + v u^{\tilde l} ,t)$ and that for every pair of such roots   
\begin{align}\label{eq:generalformpairs2}
(z_\tau (u,v, t)- z_\nu (u,v, t))\sim u^{r_{\tau \nu}} .
\end{align}

Moreover, by transversality of projection $\pi$, $z_\tau (u,v,t) = O(u^n)$. 

\begin{prop}\label{prop:deriv-bounded}
Suppose $\tilde l_i\le m_i$ for every projection \eqref{eq:discrbranch} of  a polar  branch.  Then the (first order) partial derivatives of the roots 
$z_\tau (x,y,t)$  of $f$ over $\horn_q$ (the image of \eqref{eq:param-q}), are bounded. 
Therefore, the roots of $g$ are of the form 
\begin{align}\label{eq-qwroot}
z_\tau(u,v,t) = z_\tau(u,t) +vu^{\tilde l} \tilde \psi (u,v,t),
\end{align}
with $\tilde \psi (u,v,t)$ analytic.  
\end{prop}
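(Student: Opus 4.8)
The plan is to show that along any real analytic arc inside $\horn_q$ the partial derivatives of $z_\tau$ stay bounded, and then to deduce the analytic normal form \eqref{eq-qwroot} by an integration/factorization argument. First I would fix an arc and use the implicit description of $z_\tau$ as a root of $f$: differentiating $f(q(u,v,t),z_\tau(u,v,t))=0$ with respect to each of $u$, $v$, $t$ expresses each partial derivative of $z_\tau$ as a quotient whose denominator is $f'_z$ evaluated on $\horn_q$ and whose numerator involves $f'_x, f'_y, f'_t$ and the (bounded, by \eqref{param-q}) partial derivatives of $q$. So the whole matter reduces to controlling $|f'_z|$ from below on $\horn_q$ in terms of the quantities appearing in the numerator; equivalently, one must show the numerators carry enough vanishing in $u$ to absorb the order of vanishing of $\Delta_g$.

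The key computation is to estimate $|f'_z(z_\tau)| = |\prod_{\nu\ne\tau}(z_\tau - z_\nu)|\cdot(\text{unit})$ using \eqref{eq:generalformpairs2}, which gives $|f'_z|\sim |u|^{\sum_{\nu\ne\tau} r_{\tau\nu}}$. The numerator of $\partial z_\tau/\partial v$, say, is $f'_y(q,z_\tau)\cdot u^{\tilde l}$ (from the $v$-derivative of the second coordinate of $q$), and I would bound $|f'_y(q,z_\tau)|$ from above in the same product form. The point where the hypothesis $\tilde l_i\le m_i$ for polar branches enters is exactly here: $f'_z$ can be small only where $\horn_q$ comes close to $S=\{f=f'_z=0\}$, i.e. close to a polar branch $C_i$ or a component of $\Sigma_f$; near a polar branch the extra factor $u^{\tilde l}$ (with $\tilde l\ge \tilde l_i$, and $\tilde l_i\le m_i$) supplies precisely the vanishing needed to compensate, because $m_i$ is the exponent governing the width of the polar wedge $\PW_i$ (Proposition \ref{prop:quadratic-t}) and hence the rate at which $f'_z$ degenerates transversally to $C_i$; near $\Sigma_f$, $f'_y$ itself vanishes to the appropriate order. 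Carrying this out means comparing, branch by branch, the exponent $\sum_{\nu\ne\tau} r_{\tau\nu}$ contributed by $f'_z$ against $\tilde l$ plus the vanishing order of $f'_y$, and checking the inequality holds; I would organize it so that the contributions of the roots $z_\nu$ split according to whether the corresponding discriminant branch $y_j$ lies in $I_C$ or $I_\Sigma$, and use $\tilde l\ge\max_i\tilde l_i$ together with $\tilde l_i\le m_i$.

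Once boundedness of all first-order partials of $z_\tau(x,y,t)$ over $\horn_q$ is established, the normal form \eqref{eq-qwroot} follows: set $z_\tau(u,v,t)-z_\tau(u,0,t)=\int_0^v \partial_v z_\tau(u,w,t)\,dw$; since $q(u,v,t)$ differs from $q(u,0,t)$ only in the $y$-coordinate by $u^{\tilde l}v$ and $\partial z_\tau/\partial y$ is bounded, the chain rule gives $\partial_v z_\tau = u^{\tilde l}\cdot(\partial z_\tau/\partial y)$, which is analytic and $O(u^{\tilde l})$, so the integral equals $v u^{\tilde l}$ times an analytic function $\tilde\psi(u,v,t)$. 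The main obstacle is the branch-by-branch exponent bookkeeping in the middle step — matching the order of vanishing of $f'_z$ against that of the numerators, and verifying that the hypothesis $\tilde l_i\le m_i$ is exactly the inequality that makes all these estimates close; the rest is routine differentiation of implicit functions and the analyticity bookkeeping coming from the Puiseux with parameter theorem.
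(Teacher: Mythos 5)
Your reduction of everything to a lower bound for $f'_z$ on the lift of $\horn_q$ is reasonable, and the two bookends of your argument are fine: the factorization $|f'_z(q,z_\tau)|\sim|u|^{\sum_{\nu\ne\tau}r_{\tau\nu}}$ from the Weierstrass product and \eqref{eq:generalformpairs2}, and the final integration/divisibility step once $\partial z_\tau/\partial y$ is known to be bounded. The gap is the middle step, which is the entire content of the proposition: you assert that $|f'_y(q,z_\tau)|$ can be bounded from above ``in the same product form'' and that a branch-by-branch exponent count then closes the estimate. No such product formula for $f'_y$ is available. On the graph of $z_\tau$ the only exact relation is $f'_y=-f'_z\,\partial_y z_\tau$, so estimating $f'_y$ through the root factorization is circular: differentiating the Weierstrass product in $y$ reintroduces exactly the derivatives $\partial_y z_\nu$ you are trying to bound. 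The zero locus of $f'_y$ on $\mathcal X$ is the polar variety of the projection forgetting $y$, and its contact with the wedge is not among the data ($\tilde l_i$, $m_i$, $k_{ij}$, $r_{\tau\nu}$) that the hypotheses control; likewise ``near $\Sigma_f$, $f'_y$ vanishes to the appropriate order'' is precisely what must be proved, not an input. The same problem would recur for $f'_x$ and $f'_t$ if you insist on the quotient $-f'_\bullet/f'_z$ for those derivatives (though these follow easily from analyticity in $(u,v,t)$ once the $y$-derivative is controlled).

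The hypothesis $\tilde l_i\le m_i$ enters by a different mechanism, which is how the paper argues: if $|\partial_y z_\tau|=|f'_y/f'_z|>N$ at a point of the graph over $\horn_q$, then by \eqref{eq:F'_z} that point satisfies $f'_z+bf'_y=0$ with $b=(\partial_y z_\tau)^{-1}$, $|b|<1/N$, i.e.\ it lies on the critical locus of $\pi_b$ for a small $b$, hence in some polar wedge $\PW_i$. But by \eqref{eq:generalform-t} the points of $\PW_i$ with parameter $b$ project under $\pi$ to within distance $\sim|b|^2|u|^{m_i}$ of $\pi(C_i)$, whereas a point of $\horn_q$ lies at distance $\gtrsim|u|^{\tilde l_i}(1-C|v|)$ from $\pi(C_i)$; since $\tilde l_i\le\min\{\tilde l,m_i\}$, these are incompatible once $b$ and $v$ are small, a contradiction. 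No upper bound on $f'_y$ is ever needed. (Mostowski's alternative is a Cauchy--Schwarz-lemma estimate on the $y$-disc of radius $\sim|u|^{\tilde l}$, but that too rests on the polar-wedge geometry rather than on an order count for $f'_y$.) As written, the crucial inequality $|f'_y|\lesssim|f'_z|$ on the lifted wedge is unproved, so the proposal does not go through without replacing the ``same product form'' claim by an argument of this kind.
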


\begin{proof}  
The derivative $\frac {\partial}{\partial t} (z_\tau (x,y;t))$ is bounded on $\horn_q$ because 
$z_\tau (u,v;t)$ is analytic in $t$.  Similarly $x \frac {\partial}{\partial x} (z_\tau (x,y;t))$ is  $O(x)$ because $z_\tau (u,v;t)$ is analytic in $u$ and 
$$x\frac {\partial z_\tau}{\partial x}  \simeq u\frac {\partial z_\tau}{\partial u}  \lesssim u^{n}.$$  
Finally, $\frac {\partial}{\partial y} (z_\tau (x,y,t))$ is bounded on $\horn_q$ by the conditions $\tilde l_i\le m_i$, 
$\tilde l_i\le \tilde l$, and \eqref{eq:generalform-t}.  
Indeed, since 
$f(x,y,z_\tau (x,y,t),t) \equiv 0 $ we have on the graph of $z_\tau$ 
$$
0 = \frac {\partial }{\partial y} f(x,y,z_\tau (x,y,t),t) = f'_y + \frac {\partial z_\tau}{\partial y}  f'_z.  
$$
If $|\frac {\partial z_\tau }{\partial y} |> N$, then, by \eqref{eq:F'_z}, the graph of $z_\tau (x,y,t)$ on $\horn_q$ would 
intersect a polar wedge $\PW_i$ for 
$b= (\frac {\partial z_\tau }{\partial y} )^{-1}$.
This is only possible if $\tilde l_i\ge \min\{\tilde l, m_i\}$.  If $\tilde l_i= \min\{\tilde l, m_i\}$ then  this intersection is empty provided  we suppose both $b$ and $v$ sufficiently small (and hence $N$ large).  
\end{proof}

We introduce now a version of quasi-wings and nicely-situated quasi-wings of 
\cite{mostowski85}. 

\begin{defi}[Quasi-wings]\label{def:qwings}
We say that the image of $q(u, v, t)$ of \eqref{eq:param-q} is \emph{a regular wedge 
$\horn_q$} if  
$\tilde l=  \max_{i\in I_C\cup I_\Sigma} \tilde l_i$ and  
if $\tilde l_i\le m_i$ for every $i\in I_C$.  Then by 
\emph{a quasi-wing $\QW_\tau$} over $\horn_q$ we mean the image of an analytic map 
$p_\tau (u,v, t)= (q(u, v,t), z_\tau (u,v,t))$, where $z_\tau$ is a root of 
$f(q_t(u,v), z)$.  

We say that two quasi-wings $\QW_\tau, \QW_\nu$ are 
\emph{nicely-situated} if they lie over the same regular wedge $\horn_q$. 
\end{defi}


\subsection{Construction of quasi-wings}
Consider a real analytic arc $p(s)$, $s\in [0,\varepsilon )$, of the form 
\begin{align}\label{A1}
& p(s) =(s^n, y(s), z(s), t(s)), \,  \pi(p(s))=q(s)=(s^n, y(s), t(s)), \\ \notag
&  y(s) = O(s^n), z(s)=O(s^n).
\end{align}
Under some additional assumptions we construct 
in Proposition \ref{prop:existenceq-wing} a quasi-wing containg the arc $p(s)$. 
For this we use in the proof of Lemma \ref{lem:wing} the arc-wise analytic trivializations of \cite{PP17} and construct, following \cite[Proposition 7.3]{PP17},  of a complex analytic wing containing $q(s)$. 

Let $$(u^n, y_i(u,t), z_i(u,t) ,t), \, i\in I_C,$$ be a parameterization of the polar branch $C_i$, and let $$(u^n, y_k(u,t), z_k(u,t), t), \, k\in I_\Sigma,$$
 be a parameterization of the branch $\Sigma_k$ of the singular set $\Sigma_f$. 
\begin{lemma}\label{lem:wing}
Let $q(s) = (s^n, y(s), t(s))$, $y(s) = O(s^n)$,  be a real analytic arc at the origin.  
For each polar branch $C_i$, parameterized as above, 
denote $q_i(u,t) = (u^n, y_i(u,t), t)$ and let  $\tilde l_i = \ord_s (y(s) - y_i(s,t(s))$.  
Then there is a complex analytic wing parameterized by 
$$
q(u,t) = (u^n, y(u,t), t), \quad y(u,t) = O(u^n)
$$
containing $q(s)$, that is satisfying $y(s)= y(s, t(s))$,  such that $y(u,t) - y_i(u,t)$ equals 
 $u^{\tilde l_i}$ times a unit. In particular, over  the same allowable sector we have 
\begin{align}\label{eq:distancewingpolar}
\|(u_1^n, y(u_1,t_1) ,t_1) -(u^n_2,y_i(u_2,t_2),t_2) \| 
\sim \max \{|t_1-t_2|, |u^n_1-u^n_2|,   |u_2 |^{\tilde l_i}  \} 
\end{align}
and $ \ord_s \dist (q (s), \pi(C_i))= \tilde l_i$.  
\end{lemma}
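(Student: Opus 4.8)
The plan is to carry out, in the parametric setting, the construction of a complex analytic wing through a real analytic arc of \cite[Proposition 7.3]{PP17}, the new ingredient being that the spreading out in the parameter $t$ is performed by means of the arc-wise analytic trivialization of \cite{PP17}. By the Transversality Assumptions the family of plane curve germs $\{\Delta(x,y,t)=0\}$ is equisingular in $t$, hence by \cite{PP17} it admits an arc-wise analytic trivialization: a bi-Lipschitz germ of homeomorphism $\Theta$, $\Theta(x,y,t)=(x,\eta(x,y,t),t)$, with $\Theta(x,y,0)=(x,y,0)$, carrying $\{\Delta_0=0\}\times\C^l$ onto $\{\Delta=0\}$ branch by branch, so that $\eta(u^n,y_i(u,0),t)=y_i(u,t)$ for each discriminant branch, and such that $\eta$ becomes, after a ramification $x=u^n$, analytic along real analytic arcs and along complex analytic families of such arcs.

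First I would transport the arc to the central fibre: let $\hat y(s)$ be defined by $\eta(s^n,\hat y(s),t(s))=y(s)$, that is $\Theta^{-1}(q(s))=(s^n,\hat y(s),t(s))$. Then $\hat y(s)$ is real analytic, and since $\Theta$ preserves each branch and is bi-Lipschitz while fixing the $x$-coordinate, $|\hat y(s)-y_i(s,0)|\sim|y(s)-y_i(s,t(s))|\sim|s|^{\tilde l_i}$; as $\hat y(s)-y_i(s,0)$ is real analytic in $s$ this forces $\ord_s(\hat y(s)-y_i(s,0))=\tilde l_i$. Reading the power series of $\hat y$ in a complex variable yields a complex analytic arc $u\mapsto(u^n,\hat y_0(u))$ with $\hat y_0(u)-y_i(u,0)=u^{\tilde l_i}\,\text{unit}$, and since $\tilde l_i\ge n$ and $y_i(u,0)=O(u^n)$ also $\hat y_0(u)=O(u^n)$. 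I would then spread it out over $\C^l$ by setting $y(u,t):=\eta(u^n,\hat y_0(u),t)$, so the wing is the image of $(u,t)\mapsto(u^n,y(u,t),t)$: after a further ramification in $u$ (still denoted $n$, per the conventions) this is analytic, it contains $q(s)$ because $y(s,t(s))=\eta(s^n,\hat y(s),t(s))=y(s)$, and, again by bi-Lipschitzness and branch-preservation, $|y(u,t)-y_i(u,t)|=|\eta(u^n,\hat y_0(u),t)-\eta(u^n,y_i(u,0),t)|\sim|u|^{\tilde l_i}$; as the left-hand side is analytic in $u$ this gives $y(u,t)-y_i(u,t)=u^{\tilde l_i}\,\text{unit}$, and then $y(u,t)=O(u^n)$ as before.

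The two displayed consequences are then bookkeeping with contact orders, over a fixed allowable sector. For \eqref{eq:distancewingpolar} the left-hand side is comparable to the maximum of the three coordinate differences; the $t$- and $x$-parts contribute $|t_1-t_2|$ and $|u_1^n-u_2^n|$, while for the $y$-part one splits $y(u_1,t_1)-y_i(u_2,t_2)$ as $(y(u_1,t_1)-y(u_1,t_2))+(y(u_1,t_2)-y_i(u_1,t_2))+(y_i(u_1,t_2)-y_i(u_2,t_2))$, bounding the summands by $|t_1-t_2|$, $|u_1|^{\tilde l_i}$ and $|u_1^n-u_2^n|$ respectively, the last via the analogue of Lemma \ref{lem:x_1-x_2} for discriminant branches; the reverse bound is obtained symmetrically, and the interchange $|u_1|^{\tilde l_i}\leftrightarrow|u_2|^{\tilde l_i}$ is the standard observation from the proof of Proposition \ref{prop:distances} that $|u_1|\sim|u_2|$ unless $|u_1^n-u_2^n|$ already dominates. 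Finally, taking $u_1=s$, $t_1=t(s)$ in \eqref{eq:distancewingpolar} and minimizing over $(u_2,t_2)$, the minimum equals $\sim|s|^{\tilde l_i}$ (here $\tilde l_i\ge n$ is used), so $\dist(q(s),\pi(C_i))\sim|s|^{\tilde l_i}$, that is $\ord_s\dist(q(s),\pi(C_i))=\tilde l_i$.

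I expect the only genuine obstacle to be the first step: quoting and setting up the arc-wise analytic trivialization of the equisingular family $\{\Delta_t=0\}$ from \cite{PP17} with exactly the properties used above --- bi-Lipschitz, fixing the $x$-coordinate, branch-preserving, and analytic after ramification along both real analytic arcs and complex analytic families of arcs. Everything downstream is elementary manipulation of orders of contact.
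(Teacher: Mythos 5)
Your proposal follows essentially the same route as the paper's proof: pull the arc back through the (inverse of the) arc-wise analytic trivialization of \cite{PP17} preserving $\{\Delta=0\}$ and fixing the $x$-coordinate, spread the resulting real analytic arc out in $t$, complexify, control the contact with each $y_i$ via the Lipschitz property in the $y$-variable, and then deduce \eqref{eq:distancewingpolar} and the order statement as in Proposition \ref{prop:distances}. Two small adjustments to your quoted input: \cite{PP17} provides only the \emph{partial} Lipschitz property (bi-Lipschitz in $y$ for fixed $(x,t)$, property (Z3) of Theorem 3.3 there), not a bi-Lipschitz trivialization --- which is all your estimates actually use --- and, since $\eta$ is not complex analytic in $(x,y)$, you should define $y(u,t)$ as the complexification of the real analytic function $(s,t)\mapsto\eta(s^n,\hat y(s),t)$ (analytic by arc-wise analyticity, exactly as in the paper) rather than by literally evaluating $\eta$ at complex $u$; with that reading, your contact estimates, established first for real $s$ and then upgraded by analyticity to the unit form $u^{\tilde l_i}\,\mathrm{unit}(u,t)$, coincide with the paper's argument.
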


\begin{proof}

By \cite[Theorem 3.3]{PP17} there is an arc-wise analytic local trivialization 
$\Phi: \C^2\times T \to \C^2\times T$ preserving the discriminant locus $\Delta =0$.  
In particular, $\Phi$ is of the form 
\begin{align}\label{eq:arcwise-trivialization}
\Ph (x,y, t) = (\Psi_1 (x,t), \Psi_2 (x,y,t), t) ,
\end{align}
is complex analytic with respect to $t$, and both $\Phi$ and its inverse 
$\Ph^{-1}$ are real analytic on real analytic arcs. 
By \cite[Proposition 3.7]{PP17} we may require $\Psi_1(x,t)=x$, so the allowable sectors are preserved. 

By the arc-analyticity of $\Ph^{-1}$, there exists a real analytic arc 
$(s^n, \tilde y(s), t(s))$ such that  
$ \Ph (s^n, \tilde y(s), t(s)) = (s^n,  y(s), t(s))$.  Then, by the arc-wise analyticity of 
$ \Ph,$ the map 
$q(s,t) = \Ph (s^n, \tilde y(s), t)$ is analytic in both $s$ and $t$, and 
its complexification $q(u,t)$ is a complex analytic wing containing $q(s)$.

\begin{rmk}\label{rmk:partialLipschitz}
Arc-wise analytic trivializations of \cite{PP17} satisfy 
\emph{a partial Lipschitz property}, namely they are bi-Lipschitz for the last variable, i.e., $\Psi_1$ with respect to $x$ and $\Psi_2$ with respect to $y$, etc., see  
\cite[property (Z3) of Theorem 3.3]{PP17}.  
\end{rmk}

By the partial Lipschitz property   
$$
s^{\tilde l_i} \sim |y(s) - y_i(s,t(s))| = 
|\Psi_2 (s^n,\tilde y(s), t(s)) - \Psi_2 (s^n, y_i(s,0), t(s))| \sim |\tilde y(s) - y_i(s,0)|.
$$ 
This implies, again by the partial Lipschitz property of $\Psi_2$, that 
  $s^{\tilde l_i}  \sim (y(s,t) - y_i(s,t))$.  Therefore $y(u,t) - y_i(u,t)$, being analytic, equals 
 $u^{\tilde l_i}$ times a unit. 

Since $y(u,t) = O(u^n)$, $y_i(u,t) = O(u^n)$, and 
$(y(u,t) - y_i(u,t)) \sim u^{\tilde l_i}$,  the proof of \eqref{eq:distancewingpolar} can be obtained in a similar, even simpler, way as the formula \eqref{eq:ij1} of Proposition \ref{prop:distances}. 
\end{proof}

We set 
\begin{align*}
& l_i := \ord_s \dist (p(s), C_i) \leq  \tilde l_i:=\ord_s \dist (\pi(p(s)), \pi(C_i)), i\in I_C;\\
& l_k := \ord_s \dist (p(s), \Sigma_k) \leq  \tilde l_k:=\ord_s \dist (\pi(p(s)), \pi(\Sigma_k)), k\in I_\Sigma; \\
&  \text{and } \, l:=\max\{l_i, l_k\}, \,\, \tilde l :=\max\{\tilde l_i, \tilde l_k\}.
\end{align*}

\begin{prop}[Existence of quasi-wings I]\label{prop:existenceq-wing}
Assume that the arc $ p(s)$ satisfies
\begin{align}\label{eq:A2}
\forall i \in I_C, \,\,   m_i\geq \tilde l_i,
\end{align}
 and 
\begin{align}\label{eq:A3}
\forall j \in I:= I_C\cup I_\Sigma, \,\,    l_j=\tilde l_j.
\end{align}
Then, there is a regular wedge $W_q$ containing the projection $q(s)=\pi(p(s))$ and parameterized by 
$q(u,v,t)=( u^n, y(u,t) + vu^{\tilde l}, t), \,\, q(u,t):=q(u,0,t)$, satisfying $q(s,t(s))=q(s)$ and such that $\pi^{-1}(W_q)$ is a finite union of nicely situated quasi-wings. One of these quasi-wings contains $p(s)$.
\end{prop}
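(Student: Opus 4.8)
The plan is to produce the regular wedge $W_q$ in two stages: first construct a complex analytic wing over the $(x,y,t)$-space through the projection $q(s)=\pi(p(s))$ that has the prescribed contact orders with all the projected polar branches and all the projected branches of $\Sigma_f$, and then lift it to $\mathcal X$ using Proposition \ref{prop:deriv-bounded} to obtain the quasi-wings, verifying that the arc $p(s)$ itself lies on one of them.

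For the first stage I would apply Lemma \ref{lem:wing} to $q(s)=(s^n,y(s),t(s))$. This gives an analytic family $q(u,t)=(u^n, y(u,t),t)$ with $y(u,t)=O(u^n)$, $y(s,t(s))=y(s)$, and with the property that for each polar branch $C_i$, $i\in I_C$, one has $y(u,t)-y_i(u,t)=u^{\tilde l_i}\cdot\mathrm{unit}$, where $\tilde l_i=\ord_s(y(s)-y_i(s,t(s)))$; the same construction, applied simultaneously (the arc-wise analytic trivialization $\Phi$ of \cite{PP17} preserves the whole discriminant locus $\Delta=0$, hence every one of its branches at once), gives $y(u,t)-y_k(u,t)=u^{\tilde l_k}\cdot\mathrm{unit}$ for $k\in I_\Sigma$. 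Here I use the partial Lipschitz property (Remark \ref{rmk:partialLipschitz}) exactly as in the proof of Lemma \ref{lem:wing}: the contact order upstairs (in the $\tilde y$ coordinate, where the curves are fixed) coincides with the contact order downstairs, for every branch simultaneously. Then I set $\tilde l:=\max\{\tilde l_j : j\in I\}$ and define $q(u,v,t)=(u^n, y(u,t)+vu^{\tilde l}, t)$. By construction $\tilde l=\max_{j\in I}\tilde l_j$, which is the first requirement in Definition \ref{def:qwings} for a regular wedge; the second requirement, $\tilde l_i\le m_i$ for $i\in I_C$, is precisely hypothesis \eqref{eq:A2}. So $W_q:=\horn_q$, the image of this $q(u,v,t)$, is a regular wedge, and it contains $q(s)=q(s,0,t(s))$.

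For the second stage: since $W_q$ is a regular wedge with $\tilde l_i\le m_i$ for every polar branch, Proposition \ref{prop:deriv-bounded} applies and the roots of $g(u,v,z,t)=f(q(u,v,t),z)$ are analytic functions $z_\tau(u,v,t)$ (after the harmless ramification in $u$ furnished by Lemma \ref{lem:discriminant} and the Puiseux with parameter theorem) with bounded first-order partials, so $\pi^{-1}(W_q)\cap\mathcal X$ is the finite union of the quasi-wings $\QW_\tau$, all lying over the single regular wedge $\horn_q$ and hence pairwise nicely situated. It remains to check that $p(s)$ lies on one of these quasi-wings. Since $\pi(p(s))=q(s)=q(s,0,t(s))\in W_q$, the point $p(s)$ lies over $W_q$, i.e. $p(s)\in\pi^{-1}(W_q)\cap\mathcal X$, so $z(s)=z_\tau(s,0,t(s))$ for some root $z_\tau$; as $s$ ranges over $[0,\varepsilon)$ this index is locally constant (the $z_\tau$ are distinct analytic functions, with pairwise differences $\sim u^{r_{\tau\nu}}$ by \eqref{eq:generalformpairs2}, so no two can agree along an arc accumulating at $0$ unless they coincide), hence a single $\QW_\tau$ contains the whole arc $p(s)$.

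The main obstacle, and the step requiring the most care, is the simultaneous control of all the contact orders in the first stage: one must verify that the single arc-wise analytic trivialization $\Phi$ from \cite[Theorem 3.3]{PP17} straightens \emph{all} branches of $\Delta=0$ at once while preserving allowable sectors (via $\Psi_1(x,t)=x$, \cite[Proposition 3.7]{PP17}), and that the partial Lipschitz property then transfers every contact exponent $\tilde l_j$ faithfully from the model side to $W_q$. The role of hypothesis \eqref{eq:A3} ($l_j=\tilde l_j$ for all $j\in I$) is to guarantee that the wedge does not secretly touch $S$ more than its projection does, so that the minimality/non-touching requirement (3) in the informal list is met with the chosen $\tilde l$; I would note this and defer the quantitative distance estimates $\dist(p(s),S)\sim\dist(\pi(p(s)),\pi(S))\sim s^{\tilde l}$ to the comparison already recorded in \eqref{eq:distancewingpolar} of Lemma \ref{lem:wing} together with \eqref{eq:A3}. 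Everything else is routine bookkeeping with Puiseux expansions over an allowable sector.
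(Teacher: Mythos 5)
Your proposal follows essentially the same route as the paper's proof: apply Lemma \ref{lem:wing} to $q(s)=\pi(p(s))$ (noting, as the paper does, that the same arc-wise analytic construction handles the branches of $\Sigma_f$ as well), use hypothesis \eqref{eq:A2} together with $\tilde l=\max_j\tilde l_j$ to verify the regular-wedge conditions of Definition \ref{def:qwings}, and then lift via Proposition \ref{prop:deriv-bounded} so that $\pi^{-1}(\horn_q)\cap\mathcal X$ is a finite union of nicely-situated quasi-wings, one of which contains $p(s)$. Your added details (simultaneous control of all contact orders, and the locally-constant-root argument showing a single $\QW_\tau$ contains the whole arc) are correct elaborations of steps the paper leaves implicit.
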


\begin{proof}
If we apply Lemma \ref {lem:wing} to  $q(s) = \pi (p (s))$  then we get $\tilde l_i= l_i,$ thus $l=\tilde l$ and therefore 
\begin{align*}
s^{l_i}  \sim \dist (\pi(p (s)), \pi (C_i)) \sim |y(s) - y_i(s,t(s))| 
\sim |\tilde y(s) - y_i(s,0)|. 
\end{align*}
A similar property holds for each component $\Sigma_k$ of the singular locus.  

The map
\begin{align*}
q(u, v,t) = ( u^n, y(u,t) + u^l v,t) ,
\end{align*}
for $v$ small, parameterizes a regular wedge $\horn_q$. 
The inverse image $\pi^{-1} (\horn_q)\cap \mathcal X$ is a finite union of nicely-situated quasi-wings and one of them contains $p(s)$. 
\end{proof}

\begin{cor}[Existence of quasi-wings II]\label{cor:existenceq-wing}
Suppose that $p(s) =(s^n, y(s), z(s), t(s))$ is  a real analytic arc 
in $\XX$ and not contained in the singular locus $\Sigma_f$. 
Then, for $b_0$ small and generic, $p(s)$ belongs to a quasi-wing in the coordinates 
$x, Y_{b_0},z,t$, where $Y_{b_0}:=y-b_0z$. 
\end{cor}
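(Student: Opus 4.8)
The plan is to reduce to Proposition \ref{prop:existenceq-wing} by showing that after a generic small linear change of coordinates $x, Y_{b_0}=y-b_0z, z, t$, the arc $p(s)$ — rewritten in these coordinates — satisfies hypotheses \eqref{eq:A2} and \eqref{eq:A3}. First I would record that $p(s)$, being real analytic and of the form required, can be written as $p(s)=(s^n, y(s), z(s), t(s))$ after a suitable ramification; since it is not contained in $\Sigma_f$, its projection to the $x$-axis is finite, so the exponent $n$ is well defined and $y(s), z(s)=O(s^n)$. Note that for each $b_0$ small the polar branches $C_i^{(b_0)}$ of $\pi_{b_0}$, their projections, and the singular branches $\Sigma_k$ depend analytically on $b_0$, and by Proposition \ref{prop:quadratic-t} the relevant integers $m_i=m_i(b_0)$ are defined for $b_0$ in a neighbourhood of $0$.

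Next I would verify \eqref{eq:A3}, that $l_j=\tilde l_j$ for all branches $j$, i.e. that the order of tangency of $p(s)$ with a branch $C_j$ (or $\Sigma_k$) equals the order of tangency of the projections $\pi_{b_0}(p(s))$ with $\pi_{b_0}(C_j)$. The point is that $p(s)$ and the branch $C_j$ are two distinct analytic arcs in $\C^3$ (distinct because $p(s)$ is not in $S$, or if $C_j\subset\Sigma_f$ then $p(s)\notin\Sigma_f$ by hypothesis), so the difference $p(s)-p_j(s)$ (after matching the $x$-coordinates) has a well-defined order $l_j$ in $s$, with leading vector $w_j\in\C^3$. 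The projection $\pi_{b_0}$ decreases this order precisely when $w_j$ lies in the kernel direction $(0,b_0,1)$; since there are only finitely many branches $C_j,\Sigma_k$, hence finitely many directions $w_j$, avoiding all of them is a generic (nonempty open dense) condition on $b_0$. This gives \eqref{eq:A3}.

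Then I would verify \eqref{eq:A2}, that $m_i(b_0)\ge \tilde l_i$ for every polar branch of $\pi_{b_0}$; equivalently, using \eqref{eq:A3} just established, $m_i(b_0)\ge l_i$. By Lemma \ref{lem:distance_to_Sigma} and the geometry of polar wedges (Proposition \ref{prop:distances}), $m_i$ controls the "width" of the polar wedge $\PW_i$: if $l_i>m_i$ then $p(s)$ would lie inside $\PW_i$, forcing $p(s)\subset S$ in the limit, contradicting the hypothesis that $p(s)$ is not in the singular locus together with the fact that an arc genuinely inside a polar wedge has order of contact with the spine at most $m_i$. More carefully, I expect the argument to show that for $b_0$ generic the arc $p(s)$ is transverse enough to each $\PW_i^{(b_0)}$ that $l_i\le m_i$ holds; the finitely many bad values of $b_0$ are again excluded. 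Intersecting the finitely many generic conditions on $b_0$ from the two paragraphs, we obtain a generic small $b_0$ for which $p(s)$, expressed in the coordinates $x, Y_{b_0}, z, t$, satisfies all hypotheses of Proposition \ref{prop:existenceq-wing}; applying that proposition embeds $p(s)$ in a quasi-wing in these coordinates, which is the claim.

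\textbf{Main obstacle.} The delicate step will be \eqref{eq:A2}: controlling $m_i(b_0)$ — which is an invariant of the family of polar curves of $\pi_{b_0}$ — as a function of $b_0$, and relating it to the order of contact $l_i$ of the fixed arc $p(s)$ with the $b_0$-dependent polar branch. One must be careful that the polar branches themselves move with $b_0$, so "transversality of $p(s)$ to $\PW_i^{(b_0)}$" is a condition on a moving target; the right way is presumably to argue that $l_i>m_i(b_0)$ would force $p(s)$ into $\PW_i^{(b_0)}$ (using the distance estimate of Proposition \ref{prop:distances}), and that the set of $b_0$ for which the fixed arc $p(s)$ lies in some polar wedge of $\pi_{b_0}$ is contained in a proper analytic subset — which is exactly the kind of genericity statement that also underlies the Transversality Assumptions.
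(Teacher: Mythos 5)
Your overall strategy (reduce to Proposition \ref{prop:existenceq-wing} by checking \eqref{eq:A2} and \eqref{eq:A3} after a generic shift $b_0$, with at most finitely many bad values of $b_0$) is the paper's strategy, and your leading-coefficient argument for why a projection $\pi_{b_0}$ preserves the contact order of $p(s)$ with a \emph{fixed} branch except for at most one value of $b_0$ is fine and matches the paper's first claim. The genuine gap is precisely the point you postpone as the ``main obstacle'': in the coordinates $x, Y_{b_0},z,t$ the conditions \eqref{eq:A2}, \eqref{eq:A3} must be verified against the polar branches of the \emph{new} projection, i.e.\ the curves $C_{i,b_0}$, which move with $b_0$, and your finiteness argument (``finitely many directions $w_j$'') does not apply to a moving target, since the leading vector of $p(s)-C_{i,b_0}$ may itself depend on $b_0$. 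The paper closes this using the quantitative structure of the polar wedge in $b$: by Proposition \ref{prop:quadratic-t} the exponent $m_i$ is an invariant of the whole family $b\mapsto C_{i,b}$ (so there is no ``$m_i(b_0)$'' to control), and by Corollary \ref{cor:unitofY} the $Y_{b}$-difference between $p(s)$ and $C_{i,b}$ splits into three terms of sizes $s^{\tilde l_i}$, $|b|\,s^{l_i}$, $|b|^2 s^{m_i}$; when $l_i\le m_i$ and $b\ne 0$ the middle term dominates, giving $\ord_s \dist(\pi_b(p(s)),\pi_b(C_{i,b}))=l_i$, and when $l_i>m_i$ the last term dominates, so the new contact order is exactly $m_i$ and \eqref{eq:A2}, \eqref{eq:A3} hold automatically; finally at most one $b$ per branch is bad because distinct $C_{i,b}$, $C_{i,b'}$ separate at order exactly $m_i$. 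Without some version of this computation the proof is incomplete.

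In addition, your proposed justification of \eqref{eq:A2} is incorrect as stated: $l_i>m_i$ yields no contradiction. The hypothesis excludes only $\Sigma_f$, so $p(s)$ may lie deep inside a polar wedge and may even coincide with a polar curve $C_i$ (then $l_i=\infty$), and it is false that an arc inside $\PW_i$ has contact at most $m_i$ with the spine (the arc $p_i(u,u,t)$ has contact $m_i+1$). Likewise, lying in $\PW_i$ does not force $p(s)\subset S$. The role of the generic $b_0$ is not to exploit any transversality of the given arc but to \emph{repair} \eqref{eq:A2}: since $C_{i,b_0}$ differs from $C_i$ at order exactly $m_i$ (Proposition \ref{prop:quadratic-t}), replacing $C_i$ by $C_{i,b_0}$ drops the contact order of $p(s)$ to $m_i$ whenever it exceeded $m_i$.
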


(Here by generic we mean in $\{b\in \C; |b|< \varepsilon \} \setminus A$, where $A$ is finite.  Moreover, we show that one may choose $\varepsilon>0$ independent of $p(s)$.)

\begin{proof}
Recall  that
$$\tilde l_i := \ord_s \dist (\pi (p (s)), \pi(C_i)), 
\qquad \tilde l_k := \ord_s \dist (\pi (p (s)), \pi (\Sigma_k)).$$
If all $\tilde l_i = l_i \le m_i, i\in I_C$,  $\tilde l_k = l_k, k\in I_\Sigma$ then the result follows from Proposition \ref{prop:existenceq-wing}.  Nevertheless, whether this is satisfied or not, it follows from Lemma \ref{lem:wing} that $\tilde l_i = \ord_s (y(s) - y_i(s,t(s)))$.  

We denote $\pi_b (x,y,z, t) := (x,y-bz,t)$ and by $C_{i,b}$ the associated polar set. By Transversality Assumption  $\mathcal X$ is Zariski equisingular with respect to $\pi_b$ for $b$ sufficiently small (that defines $\varepsilon$).  We claim that if $\tilde l_i > l_i$ 
\textcolor{black}{and $l_i \le m_i$} then the order $ \ord_s \dist (\pi_b (p (s)), \pi_b(C_{i}))= l_i$, for $b\ne 0$.  
Indeed, otherwise this order is strictly bigger than $l_i$  and then, again by 
Lemma \ref{lem:wing}, $|y(s) -y_i(s,t\textcolor{black}{(s)})  - 
b (z(s) -z_i(s,t\textcolor{black}{(s)}))| \ll s^ {l_i}$. By 
$\tilde l_i > l_i$ 
we have $|y(s) -y_i(s,t\textcolor{black}{(s)})|  \ll s^ {l_i}$ and therefore 
$ |z(s) -z_i(s,t \textcolor{black}{(s)})| \ll s^ {l_i}$  that 
contradicts $ \ord_s \dist (p (s), C_{i})= l_i$. 
Moreover, we claim that 
$ \ord_s \dist (\pi_b (p (s)), \pi_b(C_{i,\textcolor{black}b}))= l_i$, for $b\ne 0$ and small.  Indeed, by \eqref{eq:generalform-Yt},  
$$Y_b(s,b,t(s)) - (y(s)-bz(s)) = (y_i(s,t\textcolor{black}{(s)}) -y(s)) -b (z_i(s,t\textcolor{black}{(s)}) -z(s) ) + b^2 s^{m_i} 
 unit (s,b, t(s)). 
$$
The first summand is of size $s^{\tilde l_i}$, the second one of size 
$b s^{l_i}$, and the third one of size $b^2 s^{m_i}$. Therefore the claim follows for small $b\ne 0$ because $l_i \le m_i$.

If $l_i > m_i$ then 
$ \ord_s \dist (p (s), C_{i,b})= m_i$ for $b\ne 0$.  Therefore, in general, only for finitely many $b$, one for each $C_i$, we do not have $ \ord_s \dist (p (s), C_{i,b})\le m_i$.  

Finally, by a similar argument, $ \ord_s \dist (p (s), \Sigma_k) = 
\ord_s \dist (\pi_b (p(s)), \pi_b (\Sigma_k))$ for all $b$ but one. 

Thus the statement follows from Proposition \ref{prop:existenceq-wing}.  
\end{proof}

 \subsection{Basic properties of quasi-wings}

Let $p(s)$ be an arc as given in \eqref{A1} satisfying the assumptions of 
Proposition  \ref{prop:existenceq-wing} and let $\QW$ be the quasi-wing 
constructed in the proof of this proposition.  
 Let $p(u,v,t)=(q(u,v,t), z(u,v,t))$ be its parameterization.
Then, by  Lemma \ref{lem:wing}, $\tilde l_i{} =\ord_s(y(s)-y_i(s,t(s)))$
 and $\dist(p(s),C_i)\sim\dist (p(s), \PW_i) \sim s^{l_i}$ 
(and recall $\tilde l _i= l_i \ge m_i$).

 We shall show that the distances from $\QW$ to $\PW_i$ and to 
 $\Sigma_k$  are constant, that is, they are of order $u^{l_i}$ and  $u^{l_k}$ respectively.  
 This follows from their construction that uses arc-wise trivializations  
 of \cite{PP17} and the partial Lipschitz property of these trivializations, see Remark \ref{rmk:partialLipschitz}.  

 Recall that $\QW$ is constructed as follows.  
 Let \eqref{eq:arcwise-trivialization} be an arc-wise trivialization 
 preserving the discriminant locus $\Delta =0$. Then there is an arc 
  $q_0 (s)= (s^n, \tilde y(s),0)$ such that 
 $\Ph (u^n, \tilde y(u), t)$ is a complex analytic wing containing  
 $q(s)=\Ph (s^n, \tilde y(s), t(s))$.  The lift of $\Ph$ is an arc-wise analytic trivialization of $\XX$, see the proof of \cite[Theorem 3.3]{PP17}.  Let us denote this lift by 
 $$\Phh (x,y,z,t) = (\Psi_1 (x,t), \Psi_2 (x,y,t) , \Psi_3(x,y,z,t),t),$$
with  $\Psi_1(x,t)=x$.
 Let $p_0(s)$ denote the lift of $q_0(s)$.  Then  
$p(s)=p(s,t(s)) = \Phh (p_0(s),t (s))$.

The following proposition extends the conclusion of Lemma \ref{lem:wing} 
from the complex analytic wing $q(u,t)$ to the quasiwing $\QW$.

\begin{prop}\label{propB}
Let $\QW$ be the quasi-wing containing $p(s)$ given by Proposition  \ref{prop:existenceq-wing} and let $p(u,v,t)=(q(u,v,t), z(u,v,t))$ be its parameterization. Then for the polar sets $C_i$ parameterized by $p_i(u,t)$ and $\Sigma_k$ by $p_k(u,t)$,
$$(p(u,v,t)-p_i(u,t)) \sim u^{l_i}, \,\, (p(u,v,t)-p_k(u,t)) \sim u^{l_k}.$$
This implies that $\dist(p(u,v,t), \PW_i)\sim u^{l_i}$ and $\dist(p(u,v,t), \Sigma_k)\sim u^{l_k}.$
\end{prop}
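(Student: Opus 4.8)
The plan is to reduce the statement about $\QW$ to the already-established statement about the complex analytic wing $q(u,t)$ from Lemma \ref{lem:wing}, transporting the estimates through the arc-wise analytic trivialization $\Phh$ and its partial Lipschitz property. First I would observe that, as in the construction preceding the proposition, $\QW$ is the image under the lifted trivialization $\Phh(x,y,z,t) = (\Psi_1(x,t),\Psi_2(x,y,t),\Psi_3(x,y,z,t),t)$ of a ``straight'' quasi-wing sitting over $q_0(u,t) = \Phi^{-1}(q(u,t))$, namely the family of roots over $q_0$. The point is that $\Phh$ preserves the stratification (it is an arc-wise analytic trivialization of $\XX$ in the sense of \cite[Theorem 3.3]{PP17}), so it carries the polar branch $C_i$ and the singular branch $\Sigma_k$ into themselves (up to the parameterization), and by the partial Lipschitz property (Remark \ref{rmk:partialLipschitz}) it is bi-Lipschitz in the ``vertical'' variables $y$ and then $z$ with $\Psi_1(x,t)=x$ fixed.

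The key steps, in order: (i) write $p(u,v,t) = \Phh(p_0(u,v,t),t)$ where $p_0$ parameterizes the straight quasi-wing over $q_0$, and similarly $p_i(u,t) = \Phh(p_{0,i}(u,t),t)$, $p_k(u,t) = \Phh(p_{0,k}(u,t),t)$ for the branches $C_i$, $\Sigma_k$; (ii) apply the partial Lipschitz property of $\Psi_2$ (in $y$) to transfer $|y(u,t)+vu^{\tilde l}-y_i(u,t)| \sim |y(u,t)-y_i(u,t)| \sim u^{\tilde l_i}$, valid since $\tilde l \ge \tilde l_i$ and $v$ is small, from the source to the target in the $x,y$-plane, exactly as in the proof of Lemma \ref{lem:wing}; (iii) apply the partial Lipschitz property of $\Psi_3$ (in $z$) to control the $z$-coordinate difference; here one uses that on the straight quasi-wing the roots $z_\tau$ differ from the branch values by a quantity whose order is governed by $l_i$ rather than $\tilde l_i$, which is where the distinction between $\dist(p(s),C_i)\sim s^{l_i}$ and $\dist(\pi(p(s)),\pi(C_i))\sim s^{\tilde l_i}$ enters. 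Combining (ii) and (iii), and using that $l_i \le \tilde l_i$ so that the $z$-component dominates (or is comparable), I get $\|p(u,v,t) - p_i(u,t)\| \sim u^{l_i}$; the lower bound $\gtrsim u^{l_i}$ comes directly from the partial Lipschitz property of the inverse trivialization applied to $p_0(s)$, since $\ord_s \dist(p_0(s),C_{0,i}) = l_i$. The argument for $\Sigma_k$ is identical with $l_k$ in place of $l_i$, and uses Lemma \ref{lem:distance_to_Sigma} to know $l_k \ge m_?$ is not needed here — only the partial Lipschitz transfer. Finally, since each $\PW_i$ is parameterized by $p_i(x,b,t)$ with $p_i(x,0,t)$ parameterizing the spine $C_i$, and by Proposition \ref{prop:distances} the distance from any point to a nearby point of $\PW_i$ along the $b$-direction only adds a term of order $|b||x|^{m_i/n} \ge |x|^{m_i/n} \gtrsim |x|^{l_i}$ (using $l_i \le m_i$, which holds by assumption \eqref{eq:A2} when $i\in I_C$), one concludes $\dist(p(u,v,t),\PW_i) \sim u^{l_i}$; and $\dist(p(u,v,t),\Sigma_k)\sim u^{l_k}$ follows since $\Sigma_k$ is literally the curve $p_k(u,t)$.

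The main obstacle I expect is step (iii): controlling the $z$-coordinate of the difference $p(u,v,t)-p_i(u,t)$ and showing its order is exactly $l_i$, not something larger. The subtlety is that after applying $\Psi_3$ one must see that the root $z_\tau$ defining $\QW$ and the root defining the branch $C_i$ (in the straightened picture) have contact of order precisely $l_i$ in the fiber direction, and that this contact is not destroyed or upgraded when one also moves in the $x,y$-plane by the amount $u^{\tilde l_i}$ forced by (ii). This requires knowing that the fiberwise contact order $l_i$ and the horizontal contact order $\tilde l_i$ fit together consistently — precisely the content of assumption \eqref{eq:A3}, $l_j = \tilde l_j$ for all $j\in I$, which was built into the hypotheses of Proposition \ref{prop:existenceq-wing} for exactly this reason. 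Once one unwinds that the relevant orders all coincide, the estimate follows from the partial Lipschitz property applied coordinate by coordinate, as in Lemma \ref{lem:wing}; the rest is bookkeeping with the $\sim$ relation of the type already carried out in the proof of Proposition \ref{prop:distances}.
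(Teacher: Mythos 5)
Your overall starting point (transport through $\Phh$, the partial Lipschitz property, Lemma \ref{lem:wing}) is the same as the paper's, but step (iii) contains a genuine gap, and it is exactly the point where the paper has to work hardest. The partial Lipschitz property of $\Psi_3$ is a bi-Lipschitz statement in the variable $z$ \emph{with $(x,y,t)$ fixed}: it only compares two points lying over the same base point of the $(x,y,t)$-space. The points you need to compare, $p(u,v,t)$ and $p_i(u,t)$, differ in the $y$-coordinate by precisely the order $u^{l_i}$ you are trying to control, so there is no ``coordinate by coordinate'' Lipschitz transfer available ($\Phh$ is not Lipschitz in these directions -- if it were, most of the paper would be unnecessary). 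Concretely, what must be proved is the upper bound $|z(u,t)-z_i(u,t)|\lesssim |u|^{l_i}$ \emph{for all small $t$}, whereas the hypotheses on the arc, including \eqref{eq:A3}, only control this contact along the arc itself, i.e.\ at $t=t(s)$; invoking \eqref{eq:A3} does not bridge the two, since it is a statement about orders of distances of $p(s)$, not about the family over all $t$. Your lower bound argument has the same circularity: it presupposes $\ord_s\dist(p_0(s),C_{0,i})=l_i$ in the fibre $t=0$, which is again the unproved uniformity-in-$t$ statement (the lower bound is in fact the harmless half: it already follows for all $t$ from the $y$-coordinate alone via Lemma \ref{lem:wing} and \eqref{eq:A3}). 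A further, smaller, inaccuracy is writing $p(u,v,t)=\Phh(p_0(u,v,t),t)$ for complex $u,v$: $\Phh$ is only arc-wise analytic, and the paper uses it only on real arcs $\Phh(p_0(s),t)$.

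What the paper does to fill exactly this hole is geometric, not bookkeeping. It first shows that $\Phh$ preserves the polar wedges up to a constant factor in the wedge parameter (Lemma \ref{lem:preserv-polars}), and then proves that $\dist(\Phh(p_0(s),t),\PW_i)\sim s^{l_i}$ uniformly in $t$: for each $s_0$ one picks a point of $\pi(\overline\PW_{\varepsilon})$ minimizing the planar distance to $q_0(s_0)$, lifts the joining segment to $\mathcal X$, uses the boundedness of the first derivatives of the roots off the polar wedges (the argument of Proposition \ref{prop:deriv-bounded}) to bound the length of the lift, and then runs a case analysis on where the lifted endpoint lands -- including the delicate case where it lands \emph{outside} $\overline\PW_{\varepsilon}$, which is excluded using the contact exponents $k_{ij}$ together with \eqref{eq:A3}. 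Only after this does the conclusion follow, by noting that $z_i(s,t)-z(s,t)$ is divisible by $s^{l_i}$ for real $s$ and hence $z_i(u,t)-z(u,t)=O(u^{l_i})$ by analyticity. Your proposal would need this lemma (or an equivalent uniform-in-$t$ distance estimate) supplied; as written, step (iii) does not go through. Finally, a minor slip at the end: the chain $|b||x|^{m_i/n}\ge |x|^{m_i/n}\gtrsim |x|^{l_i}$ is backwards -- with $|b|$ small and $m_i\ge l_i$ one has $|b|\,|x|^{m_i/n}\lesssim |u|^{m_i}\le |u|^{l_i}$ -- although the deduction of $\dist(p(u,v,t),\PW_i)\sim u^{l_i}$ can be repaired, since the $b$-correction to the spine is small compared with $|u|^{l_i}$.
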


\begin{proof}
It would be convenient in the proof to  use the constant $\varepsilon$ of Definition \ref{def:polarwedge} and denote for this constant fixed, i.e. for $|b|<\epsilon$, the polar wedges by $\PW_{i,\varepsilon}$ and 
by $\overline \PW_{i,\varepsilon}$ their closure.  
We denote by $\PW_\varepsilon$ (and by $\overline \PW_\varepsilon$) the union of $\PW_{i,\varepsilon}$  (respectively of $\overline \PW_{i,\varepsilon}$)  and the singular set $\Sigma_f$. 

\begin{lemma}\label{lem:preserv-polars}
$\tilde \Phi$ preserves the polar wedges in the following sense.
There is a constant $L$ (depending on the Lipschitz constant of $\Psi_2$ for its partial Lipschitz property, see Remark \ref{rmk:partialLipschitz}) such that
$$ \PW_{i,{\epsilon}/{L}}\subset \Phh (\PW_{i,\epsilon})
\subset  \PW_{i, L \epsilon} .$$
\end{lemma}\label{lem:preservingcontactsI}

\begin{proof}
By construction $\Phh$ preserves the polar set and the singular locus.  Therefore the lemma follows from the partial Lipschitz property of $\Psi_2$ and parameterization \eqref{eq:generalform-t}.
\end{proof}

\begin{lemma}
The following holds: 
$$\dist(\Phh(p_0(s),t), \PW_i)\sim s^{l_{i}}  \text {, } \quad 
\dist(\Phh(p_0(s),t), \Sigma_k)\sim s^{l_{k}} .$$    
\end{lemma}

\begin{proof}
Let $l = \max_{i\in I} l_i$.  First for fixed $\varepsilon >0 $ we show that 
\begin{align}\label {eq:distancetopolarset}
\dist(\Phh(p_0(s),t), \overline \PW_\varepsilon)\sim s^{l} .
\end{align}
It is clear that this distance is $\gtrsim$, this already holds after the projection $\pi$.  We show the opposite inequality. 

Fix $s_0>0$.  By Lemma \ref{lem:wing} 
$\dist (q_0(s_0), \pi (\overline \PW_\varepsilon) \cap \{t=0, s=s_0\} )
\sim s_0^l$.  
Let $c(s_0)$ be such that this distance equals exactly 
$c(s_0) s_0^l$ and let $q_{min}(s_0)$ be one of the points in 
$\pi (\overline \PW_\varepsilon) \cap \{t=0, s=s_0\}  $ realizing this distance.  Let $\tau$ be the lift of the segment joining $q_0(s_0)=\pi(p_0(s_0))$ and $q_{min}(s_0)$. 
Since $\tau $ is in the complement of $\PW_{\epsilon}$ (except if its endpoint is in $\Sigma_f$), by the boundness of partial derivatives, c.f. the argument of the proof of Proposition \ref{prop:deriv-bounded}, its length is comparable to the length of the segment, that is $s_0^l$.  
Denote by $p_{min}(s_0)$ the other endpoint of this lift, 
so that $q_{min}(s_0) =\pi(p_{min}(s_0))$.  
Since $\Psi_2$ is partially Lipschitz and $\Phh$ preserves the complement of $\PW_{\epsilon}$, 
see Lemma \ref{lem:preserv-polars}, we have for small $t$
\begin{align} \label{eq:dist-tau}
\dist(\Phh(p_0(s_0),t), \Phh(p_{min}(s_0),t))\lesssim s_0^l.
\end{align} 
Since the distance $c(s_0) s_0^ l$ is a subanalytic function we may suppose, 
by a choice of $q_{min}(s_0)$, that also $q_{min}(s_0)$ and $p_{min}(s_0)$ 
are subanalytic in $s_0$. 

There are three cases to consider 
$p_{min}(s_0)\in \overline \PW_\varepsilon\setminus \Sigma_f$, 
$p_{min}(s_0)\in \Sigma_f$, and $p_{min}(s_0)\notin \overline \PW_\varepsilon$.  

If $p_{min}(s_0)$ is in $\overline \PW_\varepsilon\setminus \Sigma_f$ then, since 
$\Phh$ preserves the polar set, so is $\Phh(p_{min}(s_0),t)$, and the claim follows from \eqref{eq:dist-tau}.  A similar argument applies if 
 $p_{min}(s_0)\in \Sigma_f$.  

If $p_{min}(s_0)\notin \overline \PW_\varepsilon$ then there is 
another point in $\pi^ {-1} (q_{min}(s_0))$ that is in 
$\overline \PW_\varepsilon$.  Suppose that it is in $\overline \PW_{j, \epsilon}$ and denote it by $p_j (s_0)$.  By the assumptions $l_j = \tilde l_j = \tilde l=l$ and by the partial Lipschitz property the magnitude of $\dist(\Phh(p_j(s_0),t), \Phh(p_{min}(s_0),t))$ 
is independent of $t$, say $\sim s_0^\alpha$.  
If  $\alpha\ge l$  then \eqref{eq:distancetopolarset} follows from 
\eqref{eq:dist-tau}. 
If  $\alpha< l$ then 
$\dist(\Phh(p_j(s_0),t), \Phh(p_{min}(s_0),t)) \sim \dist (
\overline \PW_{j} ,\Phh(p_{min}(s_0),t))$ and 
therefore $\dist(\Phh(p_j(s_0),t), \Phh(p_{0}(s_0),t)) \sim \dist (
\overline \PW_{j} ,\Phh(p_{0}(s_0),t))$.  But, by assumption on the curve $p(s) = \Phh (p_0(s), t(s))$, 
\begin{align*}
 & \dist(\Phh(p_j(s_0),t(s_0)), \Phh(p_{min}(s_0),t(s_0))) \\
 & \le 
\dist(\Phh(p_j(s_0),t(s_0)), p(s_0)) + \dist(p(s_0), \Phh(p_{min}(s_0),t(s_0)))  
\le C s_0^l ,
\end{align*}
for a universal constant $C$.  This shows that the case $\alpha< l$ 
is impossible.

Now we show that \eqref{eq:distancetopolarset} implies the claim of  lemma.  Again, it is enough to show $\lesssim$ since the opposite inequality is already known for the sets projected by $\pi$.  Firstly, the distance on the left-hand side of \eqref{eq:distancetopolarset} has to be attained on one of $\overline \PW_{j,\varepsilon}$ or $\Sigma _k$.  Suppose, for simplicity, that it is 
$\overline \PW_{j,\varepsilon}$.  Then $l=l_j$, that implies the claim of lemma for $i=j$.  By the above there is a curve $p_j(s) \in 
\overline \PW_j \cap \{t=0\}$ such that 
\begin{align} \label{eq:dist-tau-tau}
\dist(\Phh(p_0(s),t), \Phh(p_{j}(s),t))\sim s^{l_ j}.
\end{align}

Let $i\ne j$.  Then  $l_i\leq l_j$ and 
\begin{align} \label{eq:dist-tau-tau-tau}
\dist(\Phh(p_0(s),t), \overline \PW_i)\lesssim s^{l_j} + \dist(\Phh(p_j(s),t), \overline \PW_i).
\end{align}
To complete the proof we note that $\dist(\Phh(p_j(s),t), \overline \PW_i) 
\sim s^ {k_{ij}}$ and $k_{ij}$ is also the order of contact between the  discriminant branches $\Delta_i$ and $\Delta_j$.  
If $l_i<l_j$ then $\dist(q(s,t),\Delta_i)\sim\dist(\Delta_i,\Delta_j)
\sim s^{k_{i,j}},$ and by \eqref{eq:A3}, $l_i=\tilde l_i=k_{i,j}.$

If $l_i=l_j$ then $k_{i,j}<l_i=l_j$ is impossible. 
Thus $k_{i,j}\geq l_j$ and the RHS of \eqref{eq:dist-tau-tau-tau} is bounded by $s^{l_i}=s^{l_j} $ as claimed.  
This ends the proof of Lemma 7.11.  
\end{proof}

To show Proposition \ref{propB} we note that  $(y_i(u,t)-y(u,t))\sim u^{l_i}$ 
by Lemma \ref{lem:wing} and $z_i(s,t)-z(s,t)$ is divisible by $s^{l_i}$ for $s$ real and hence $z_i(u,t)-z(u,t)$ is divisible by $u^{l_i}$.  
\end{proof}  

\begin{cor}\label{cor:orders-l_i}
Under the assumption of Proposition \ref{propB}, we have 
$$(y_i(u,t)-y(u,t))\sim u^{l_i} \text{ and } \,\, z_i(u,t)-z(u,t)=O(u^{l_i})$$ 
for all $i\in I=I_C\cup I_\Sigma$.    \qed
\end{cor}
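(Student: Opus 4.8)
The plan is to obtain both estimates directly from Proposition \ref{propB} and Lemma \ref{lem:wing}, by specializing the parameterization of the quasi-wing to its spine $v=0$. Recall that $\QW$ is produced by Proposition \ref{prop:existenceq-wing}, so the hypotheses \eqref{eq:A2} and \eqref{eq:A3} are in force; in particular $\tilde l_j=l_j$ for every $j\in I=I_C\cup I_\Sigma$. Write $p(u,v,t)=(q(u,v,t),z(u,v,t))$ for the parameterization of $\QW$ as in Proposition \ref{propB}, with $q(u,v,t)=(u^n,\,y(u,t)+vu^{\tilde l},\,t)$; its spine is $p(u,0,t)=(u^n,y(u,t),z(u,t),t)$, where $z(u,t):=z(u,0,t)$ as in the statement.

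First I would treat the $y$-estimate. Applying Lemma \ref{lem:wing} to the projected arc $q(s)=\pi(p(s))$, exactly as in the proof of Proposition \ref{prop:existenceq-wing}, gives that $y(u,t)-y_i(u,t)$ equals $u^{\tilde l_i}$ times a unit for each polar branch $C_i$, and the analogous conclusion holds with $y_k(u,t)$, $k\in I_\Sigma$, in place of $y_i$. Since $\tilde l_i=l_i$ and $\tilde l_k=l_k$, this is precisely $y_i(u,t)-y(u,t)\sim u^{l_i}$ for all $i\in I$. For the $z$-estimate I would then set $v=0$ in Proposition \ref{propB}, obtaining $\|p(u,0,t)-p_i(u,t)\|\sim u^{l_i}$ and $\|p(u,0,t)-p_k(u,t)\|\sim u^{l_k}$. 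Because the $x$- and $t$-coordinates of $p(u,0,t)$ and of $p_i(u,t)=(u^n,y_i(u,t),z_i(u,t),t)$ coincide, this norm is comparable to $\max\{|y(u,t)-y_i(u,t)|,\ |z(u,t)-z_i(u,t)|\}$; hence $|z(u,t)-z_i(u,t)|\lesssim u^{l_i}$, that is $z_i(u,t)-z(u,t)=O(u^{l_i})$, and the same for $k\in I_\Sigma$. Alternatively, one may reuse the divisibility argument at the end of the proof of Proposition \ref{propB}: $z_i(s,t)-z(s,t)$ is divisible by $s^{l_i}$ for real $s$, hence $z_i(u,t)-z(u,t)$ is divisible by $u^{l_i}$ by analyticity of the parameterizations.

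There is essentially no obstacle here: the corollary merely repackages the last two sentences of the proof of Proposition \ref{propB} as a self-contained statement. The only point requiring a bit of care is the bookkeeping, namely that $z(u,t)$ denotes the spine value ($v=0$) and that the orders transfer correctly between the real arc $s\mapsto p(s)$ and its analytic complexification in $u$; but this is already guaranteed by Lemma \ref{lem:wing} together with the analyticity of the parameterizations, so nothing new is needed.
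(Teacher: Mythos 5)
Your proposal is correct and follows essentially the same route as the paper: the corollary is left with a \qed precisely because it records the last step of the proof of Proposition \ref{propB}, namely $y_i(u,t)-y(u,t)\sim u^{\tilde l_i}=u^{l_i}$ from Lemma \ref{lem:wing} together with \eqref{eq:A3}, and the $z$-bound via divisibility of $z_i(s,t)-z(s,t)$ by $s^{l_i}$ and analyticity, which you reproduce (your alternative derivation of the $z$-estimate by specializing the statement of Proposition \ref{propB} at $v=0$ and comparing coordinates is also legitimate and not circular, since the proposition is proved independently).
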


\noindent


\section{Lipschitz vector fields on quasi-wings}

Let the quasi-wings $\QW_\tau$ over a fixed regular wedge $\horn_q$ parameterized by 
\eqref{eq:param-q} be given by 
\begin{align}\label{eq:q-wing-parameterization}
p_\tau (u,v,t) = ( u^n, y (u,v,t ), z_\tau(u,v,t),t), \,  \, \textcolor{black}{y(u,v,t)=y(u,t) + u^l v}.
\end{align}  
We consider such parameterizations for $u$ in an allowable sector $\Xi = \Xi_I= \{u\in \C; \arg u \in I\}$.  Then we may write these parameterizations in terms of $t,x,v$ assuming implicitly that we work over a sector $\Xi$ and, moreover, 
 that $z_\tau (x,v,t )$ is a single valued functions.  Again, in order to avoid heavy notation we do not use special symbols for the restriction of a quasi-wing parameterization to an allowable sector.

 Even if the parameterizations of quasi-wings carry many similarities to the parameterizations of polar wedges,  the boundness of partial derivatives (the property (4) of the beginning of the previous section)  is opposite to the very definition of polar set, the vertical tangent versus the horizontal tangents. This boundness and the fact that the projection $\pi$ restricted to a quasi-wing is bi-Lipschitz make the work with the Lipschitz geometry of quasi-wings in principle simpler.

\begin{prop}\label{prop:distances-qw}
For all $\tau$   and for all $x_1,x_2,v_1,v_2, t_1,t_2$ sufficiently small 
\begin{align}\label{eq:i=j-qw}
\|p_\tau (x_1,v_1,t_1) -p_\tau (x_2,v_2,t_2) \| & \sim 
\textcolor{black}{\|(x_1,y_1,t_1) - (x_2,y_2,t_2)\|} \\  \notag
& \sim \max \{|t_1-t_2|, |x_1-x_2|,  |v_1-v_2|  |x_2 |^{l/n}  \} .
\end{align}
For every pair of parameterizations $p_\tau , p_\nu$ 
\begin{align}\label{eq:ij-qw}
& \|p_\tau (x_1,v_1,t_1) -p_\nu (x_2,v_2,t_2) \| \\ \notag 
& \sim  \|p_\tau (x_1,v_1,t_1) -p_\tau (x_2,v_2,t_2) \| + \|p_\tau (x_2,v_2,t_2) -p_\nu (x_2,v_2,t_2) \| \\
\notag
& \sim \max \{|t_1-t_2|, |x_1-x_2|,  |x_2 |^{r_{\tau \nu}/n}, |v_1-v_2|  |x_2 |^{l/n}   \} ,
\end{align}
where $r_{\tau \nu}$ are given by \eqref{eq:generalformpairs2}.  \qed
\end{prop}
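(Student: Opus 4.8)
The plan is to reduce the general two-parameter comparison to the simplified situation $t_1=t_2$, $x_1=x_2$ by exactly the same two reduction steps used in the proof of Proposition \ref{prop:distances}. For the first reduction, since $p_\tau(u^n,v,t)$ is analytic in $t$, the difference $p_\tau(x,v,t_1)-p_\tau(x,v,t_2)=O(t_1-t_2)$, and $|t_1-t_2|$ is always a summand of the left-hand side; the triangle inequality then lets us replace $t_1,t_2$ by a common value at the cost of an additive $|t_1-t_2|$ term, which is already accounted for on the right. For the second reduction, I would invoke the analogue of Lemma \ref{lem:x_1-x_2}: writing $y(u,v,t)=u^n\hat y(u,v,t)$ and $z_\tau(u,v,t)=u^n\hat z_\tau(u,v,t)$ (both are $O(u^n)$ by transversality of $\pi$ and by \eqref{eq-qwroot}), one gets $|y(u_1,v,t)-y(u_2,v,t)|=O(|u_1^n-u_2^n|)$ and similarly for $z_\tau$ and for the $u\partial_u$-derivatives, using $|u_1^n-u_2^n|\sim|u_1-u_2|\max\{|u_1|^{n-1},|u_2|^{n-1}\}$ over an allowable sector. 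Hence $p_\tau(x_1,v,t)-p_\tau(x_2,v,t)=O(x_1-x_2)$, and again the triangle inequality absorbs the error into the $|x_1-x_2|$ term already present on the right.

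Next I would establish \eqref{eq:i=j-qw} in the reduced case $t=t_1=t_2$, $x=x_1=x_2$. Here the $x$- and $t$-coordinates of $p_\tau$ agree, so the distance is controlled by the difference in the $y$- and $z_\tau$-coordinates. By \eqref{eq:q-wing-parameterization}, $y(x,v_1,t)-y(x,v_2,t)=x^{l/n}(v_1-v_2)$, which is exactly $|v_1-v_2||x|^{l/n}$; and by \eqref{eq-qwroot}, $z_\tau(x,v_1,t)-z_\tau(x,v_2,t)=x^{l/n}(v_1\tilde\psi(x,v_1,t)-v_2\tilde\psi(x,v_2,t))=x^{l/n}(v_1-v_2)(\text{unit}+O(\|(v_1,v_2)\|))$, using that $v\mapsto v\tilde\psi(x,v,t)$ is bi-Lipschitz for $\tilde\psi$ a unit (this is the same mechanism as in Step 3 of the proof of Proposition \ref{prop:distances}). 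The first coordinate difference ($y$) already gives a clean lower bound $\gtrsim|v_1-v_2||x|^{l/n}$, so no cancellation between the $y$- and $z_\tau$-components can occur; combined with the upper bound from both coordinates this yields \eqref{eq:i=j-qw}. Observe also the symmetry between $|x_1|^{l/n}$ and $|x_2|^{l/n}$ in the max: over an allowable sector, when $|x_1-x_2|$ is not dominant we have $|x_1|\sim|x_2|$, so either may be used.

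For \eqref{eq:ij-qw} I would first prove the Corollary-\ref{cor:comparison}-style triangle identity and then the explicit formula. In the reduced case $t=t_1=t_2$, $x=x_1=x_2$, $v=v_1=v_2$, the roots $z_\tau$ and $z_\nu$ lie over the \emph{same} wedge $\horn_q$, so the $x$-, $t$-, and $y$-coordinates of $p_\tau$ and $p_\nu$ literally coincide, and $p_\tau-p_\nu$ reduces to the $z$-coordinate difference $z_\tau(u,v,t)-z_\nu(u,v,t)\sim u^{r_{\tau\nu}}$ by \eqref{eq:generalformpairs2}; moreover $(z_\tau-z_\nu)(u,v,t)=u^{r_{\tau\nu}}\cdot(\text{analytic unit in }u,v,t)$, which gives the analyticity needed to run the triangle-inequality argument of Proposition \ref{prop:LVF2}. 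Combining this with \eqref{eq:i=j-qw} (applied to $p_\tau$) via the triangle inequality in both directions yields the asymptotic additivity, and then substituting the explicit form of each summand produces the stated $\max$. The main obstacle I anticipate is purely bookkeeping: ensuring that the various ``unit'' factors $\tilde\psi$ and the analytic factor in $z_\tau-z_\nu$ are genuinely bounded away from $0$ on the allowable sector and that, in the case-by-case analysis of which term in the $\max$ dominates (as in Step 4 of the proof of Proposition \ref{prop:distances}), the contributions of $v_1,v_2$ are taken small relative to those units so that no accidental cancellation spoils the lower bound. Once the reductions and the clean leading-coordinate lower bound are in place, the rest is routine.
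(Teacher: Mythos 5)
Your proposal is correct and takes essentially the approach the paper intends: the paper omits the proof (ending the statement with a \qed) as a routine analogue of Proposition \ref{prop:distances}, resting on the parameterization \eqref{eq:q-wing-parameterization}, the bounded derivatives of Proposition \ref{prop:deriv-bounded}, and \eqref{eq:generalformpairs2}, which is exactly what you spell out. One small caveat: \eqref{eq-qwroot} only asserts that $\tilde \psi$ is analytic, not a unit, so $v\mapsto v\tilde\psi(x,v,t)$ need not be bi-Lipschitz; this is harmless because, as you yourself observe, the lower bound comes from the $y$-coordinate, which is exactly linear in $v$, and for the $z_\tau$-coordinate only the upper bound $O(|v_1-v_2||x|^{l/n})$ is needed, which holds for $\tilde\psi$ merely bounded and Lipschitz.
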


By Proposition \ref{prop:distances-qw}, $h_\tau (x,v;t)$ defines a Lipschitz function on the quasiwing 
$\QW_\tau$ if and only if 
 \begin{align}\label{eq:qw-lipschitz}
|h_\tau(x_1,v_1,t_1) -h_\tau(x_2,v_2, t_2) | & \lesssim  \|(x_1,y_1,t_1) - (x_2,y_2,t_2)\|
 \\  \notag 
& \sim  
 |t_1-t_2| + |x_1-x_2| +  |v_1-v_2|  |x_2 |^{l/n}  .
\end{align}

Given two nicely-situated quasi-wings.  
Let $h$ be a function defined on a subset of $\QW_\tau \cup \QW_\nu$ whose restrictions to 
$\QW_\tau$, $ \QW_\nu$ we denote by   \textcolor{black}{$h_\tau (x,v,t)=h\circ p_\tau$, $h_\nu (x,v,t)=h\circ p_\nu$ respectively}. 
 Then, after Proposition \ref{prop:distances-qw}, $h$ is Lipschitz iff so are its restrictions $h_\tau$, $h_\nu $ and
\begin{align}\label{eq:2qw-lipschitz}
|h_\tau(x_1,v_1, t_1) -h_\nu(x_2,v_2; t_2) | \lesssim 
|t_1-t_2| + |x_1-x_2| + |x_2|^{r_{ij}/n}+ |v_1-v_2|  |x_2 |^{l/n}   .
\end{align}

\begin{prop} \label{prop:LVF1-qw}
The vector fields given on $\QW_\tau \cup \QW_\nu$ by  $p_{k*}(v)$, $k=\tau, \nu$, 
where $v$ are $\frac {\partial}{\partial t}$, $x\frac {\partial}{\partial x}$, or $\frac {\partial}{\partial v}$,  are Lipschitz.  
\end{prop}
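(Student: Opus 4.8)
The plan is to mimic the proof of Proposition \ref{prop:LVF2} for polar wedges, but working with the distance formulas of Proposition \ref{prop:distances-qw} in place of those of Proposition \ref{prop:distances}. First I would reduce to checking each coordinate separately: a vector field is Lipschitz iff each of its four components (in the $t,x,y,z$ coordinates of $\C^3\times\C^l$) is a Lipschitz function, and by Proposition \ref{prop:distances-qw} a function $h_\tau$ on $\QW_\tau\cup\QW_\nu$ is Lipschitz iff its restrictions $h_\tau,h_\nu$ satisfy \eqref{eq:qw-lipschitz} and the cross-condition \eqref{eq:2qw-lipschitz} holds. I would then treat the three vector fields one at a time.

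For $p_{k*}(\partial/\partial t)$ the $t$-component is the constant $1$ and the $x$-component is $0$, so only the $y$- and $z$-components need attention: these are $\partial y/\partial t = \partial y/\partial t(u,t)$ (independent of $v$ up to the $u^l v$ term, which contributes nothing to the $t$-derivative) and $\partial z_\tau/\partial t$, both analytic in $u,v,t$. For $p_{k*}(x\,\partial/\partial x)=\tfrac1n p_{k*}(u\,\partial/\partial u)$ the components are $x$, $u\,\partial y/\partial u$, $u\,\partial z_\tau/\partial u$, $0$, again analytic in $u,v,t$ and all $O(u^n)$ by the transversality bounds $y=O(u^n)$, $z_\tau=O(u^n)$ (so $u\,\partial y/\partial u$, $u\,\partial z_\tau/\partial u$ are $O(u^n)$ too). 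For $p_{k*}(\partial/\partial v)$ the components are $0$, $u^l$, $\partial z_\tau/\partial v$, $0$; by Proposition \ref{prop:deriv-bounded}, more precisely by \eqref{eq-qwroot}, one has $z_\tau(u,v,t)=z_\tau(u,t)+vu^{\tilde l}\tilde\psi(u,v,t)$, so $\partial z_\tau/\partial v = u^{\tilde l}\tilde\psi + vu^{\tilde l}\partial\tilde\psi/\partial v = O(u^{\tilde l}) = O(u^l)$ since $\tilde l = l$ for a regular wedge. In every case the component function, call it $h_k(u,v,t)$, is analytic in $u,v,t$ and, where a factor of $u$-power $\kappa\in\{0,n,l\}$ appears, of the form $u^\kappa\cdot(\text{analytic})$.

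The estimates \eqref{eq:qw-lipschitz} and \eqref{eq:2qw-lipschitz} then follow from elementary manipulations of such analytic germs, exactly as in Lemma \ref{lem:x_1-x_2} and the proof of Proposition \ref{prop:LVF2}. For the single-wing bound \eqref{eq:qw-lipschitz} one splits $h_\tau(x_1,v_1,t_1)-h_\tau(x_2,v_2,t_2)$ into successive increments in $t$, then in $u$ (using $|u_1^n-u_2^n|\sim|u_1-u_2|\max\{|u_1|^{n-1},|u_2|^{n-1}\}$ over an allowable sector), then in $v$ (the $v$-increment of an analytic germ with a factor $u^l$ being $O(|v_1-v_2|\,|u_2|^l)$); writing $x=u^n$ gives the right-hand side of \eqref{eq:qw-lipschitz}. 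For the cross-wing bound \eqref{eq:2qw-lipschitz} it suffices, by the triangle-inequality decomposition in Proposition \ref{prop:distances-qw} (the first $\sim$ line of \eqref{eq:ij-qw}), to check the inequality for $t_1=t_2$, $x_1=x_2$, $v_1=v_2$, where it reduces to $|h_\tau(x,v,t)-h_\nu(x,v,t)|\lesssim|x|^{r_{\tau\nu}/n}$; this holds because $z_\tau(u,v,t)-z_\nu(u,v,t)\sim u^{r_{\tau\nu}}$ by \eqref{eq:generalformpairs2} and is in fact $u^{r_{\tau\nu}}$ times an analytic function, while the $t$-, $x$- and $v$-components of $p_{\tau*}(v)$ and $p_{\nu*}(v)$ agree identically and the $y$-components differ only through $z$-independent data that is the same for both wings. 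The main obstacle, and the only point requiring care, is bookkeeping the exponent $l$ versus $\tilde l$ and the transversality bounds on $y$ and $z_\tau$ so that the $v$-derivative of $z_\tau$ really carries the full factor $|x|^{l/n}$ needed to match the metric of \eqref{eq:i=j-qw}; once \eqref{eq-qwroot} is invoked this is immediate.
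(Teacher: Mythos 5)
Your proposal is correct and in substance identical to the paper's proof: both arguments rest on the same facts — $y(u,v,t)$ and $z_\tau(u,v,t)$ are analytic and divisible by $u^n$, their $v$-derivatives are divisible by $u^l$ via \eqref{eq-qwroot}, and the cross-wing estimate reduces, after the decomposition of \eqref{eq:ij-qw}, to $z_\tau-z_\nu$ being $u^{r_{\tau\nu}}$ times a unit by \eqref{eq:generalformpairs2}. The only difference is presentational: the paper bounds the weighted derivatives $\frac{\partial}{\partial t}$, $u\frac{\partial}{\partial u}$, $\frac{\partial}{\partial v}$ of the coefficients directly (exploiting that the projection of a quasi-wing to its wedge is bi-Lipschitz), whereas you rerun the increment-splitting scheme of Propositions \ref{prop:LVF1} and \ref{prop:LVF2}, which amounts to the same estimates.
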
 

This result is  analogous to Proposition \ref{prop:LVF1}.  The only  difference comes from the fact that $b\frac {\partial}{\partial b}$ is replaced by 
$\frac {\partial}{\partial v}$, since we do not require the vector field to be tangent 
to the set given by $v=0$.  The proof we sketch below is simpler that the one of 
Proposition \ref{prop:LVF1} thanks to the mentioned above bi-Lipschitz property.  

\begin{proof}
\textcolor{black}{First we check that the partial derivatives $\frac {\partial}{\partial t}$, 
$x\frac {\partial}{\partial x}$, 
$\frac {\partial}{\partial y}$ 
of the coefficients of these vector fields  are bounded. 
Since $nx\frac {\partial}{\partial x} =  u\frac {\partial}{\partial u}$ and $\frac {\partial}{\partial y} = u^{-l}\frac {\partial}{\partial v}$ for the latter two it is more convenient} to check that 
$u\frac {\partial}{\partial u}$ is bounded by $x=u^n$, and 
$\frac {\partial}{\partial v}$ is bounded by $u^l$.  
Then the claim follows from the facts that $y (u,v,t ), z_\tau(u,v, t)$ 
are analytic and divisible by $u^n$, and $\frac {\partial}{\partial v} y (u,v,t ), 
\frac {\partial}{\partial v} z_\tau(u,v, t)$ are divisible by $u^l$. 
This shows that these vector fields are Lipschitz on each wing $\QW_\tau$, 
$\QW_\nu$.

To obtain the Lipschitz property between the points of $\QW_\tau$ and  
$\QW_\nu$ we use a similar argument.  \textcolor{black}{ Namely, we use formula \eqref{eq:generalformpairs2} to show that} 
$\frac {\partial}{\partial t}(z_\tau -z_\nu)$, 
$\frac {\partial}{\partial u} (z_\tau -z_\nu)$, 
$\frac {\partial}{\partial v} (z_\tau -z_\nu)$ are bounded (up to a constant) by 
$z_\tau -z_\nu$, and we complete using formulas \eqref{eq:i=j-qw} and \eqref{eq:ij-qw}.
\end{proof}

 Let   $p_{\tau, *} (w)$ be a vector field on $\QW_ \tau$, where 
\begin{align}\label{eq:vectorfieldw}
w(x,v,t)= \alpha \frac {\partial}{\partial t} + \beta \frac {\partial}{\partial x} + 
\gamma \frac {\partial}{\partial v}.
\end{align}
\textcolor{black} {We always suppose the vector field $p_{\tau, *} (w)$ is 
well defined on $\QW_ \tau$, that is independent of $v$ if $x=0$,
 and  it is stratified,  that is tangent to $T$.    
  The independence on $v$ if $x=0$ implies that both $\alpha (0,v,t)$ and $\beta (0,v,t)$ are independent on $v$, and the tangency to $T$ assures that  in fact $\beta (0,v, t)=0$. Note also that 
 $ p_{i*}(\frac {\partial}{\partial v})$ is always zero on $x=0$.}

The next results easily follow from \eqref{eq:qw-lipschitz}.  Their proofs are similar (and simpler) then the proofs of Propositions \ref{prop:LVFcriterion1} and \ref{prop:LVFcriterion2}.

\begin{prop}\label{prop:LVFcriterion1-qw}
A vector field on  $\QW_\tau$ of the form $p_*(w)$ 
 is stratified  Lipschitz iff: \\
1) $ \alpha $ satisfies \eqref{eq:qw-lipschitz};\\
2) $|\beta |\lesssim |x|$ and 
$ \beta $ satisfies \eqref{eq:qw-lipschitz};\\
3) $ \gamma x^{l/n}$  satisfies \eqref{eq:qw-lipschitz}.  
\qed 
\end{prop}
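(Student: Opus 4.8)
The plan is to mimic, essentially line by line, the proof of Proposition~\ref{prop:LVFcriterion1}, with the polar wedge $\PW_i$ replaced by the quasi-wing $\QW_\tau$; the argument is in fact shorter, since by Proposition~\ref{prop:distances-qw} the projection $\pi|_{\QW_\tau}$ is bi-Lipschitz and the $v$-derivative of the parameterization \eqref{eq:q-wing-parameterization} is explicit. Writing $p_\tau(u,v,t)=(u^n,\,y(u,t)+u^{l}v,\,z_\tau(u,v,t),\,t)$, a direct computation shows that $p_{\tau,*}(w)$ has $t$- and $x$-coordinates $\alpha$ and $\beta$, $y$-coordinate $\alpha\,\partial_t y+\beta\,\partial_x y+\gamma\,x^{l/n}$ (because $\partial_v y=u^{l}=x^{l/n}$), and $z$-coordinate $\alpha\,\partial_t z_\tau+\beta\,\partial_x z_\tau+\gamma\,\partial_v z_\tau$, where by \eqref{eq-qwroot} and Proposition~\ref{prop:deriv-bounded} one has $\partial_v z_\tau=x^{l/n}\rho$ with $\rho$ analytic and bounded. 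The recurring tools will be Proposition~\ref{prop:LVF1-qw} (the vector fields $p_{\tau,*}(\partial/\partial t)$, $p_{\tau,*}(x\,\partial/\partial x)$, $p_{\tau,*}(\partial/\partial v)$ are Lipschitz, hence so are all of their bounded coordinate functions, which moreover lie respectively in the classes $L_1$, $L_x$, $L_{x^{l/n}}$ of Lemma~\ref{lem:funny}) and Lemma~\ref{lem:funny}.

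For ``$\Leftarrow$'' assume 1)--3) and split $p_{\tau,*}(w)=p_{\tau,*}(\alpha\,\partial/\partial t)+p_{\tau,*}(\beta\,\partial/\partial x)+p_{\tau,*}(\gamma\,\partial/\partial v)$. In the first summand each coordinate is $\alpha$ times one of $1,0,\partial_t y,\partial_t z_\tau$, a product of the Lipschitz function $\alpha$ with a bounded Lipschitz function, hence Lipschitz. In the second, the nonzero coordinates are $\beta$, $\beta\,\partial_x y=\beta\cdot(x\,\partial_x y)/x$ and $\beta\,\partial_x z_\tau=\beta\cdot(x\,\partial_x z_\tau)/x$; since $\beta\in L_x$ by 2) and $x\,\partial_x y,\,x\,\partial_x z_\tau\in L_x$ by Proposition~\ref{prop:LVF1-qw}, Lemma~\ref{lem:funny} with $h=x$ shows these are Lipschitz. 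In the third, the $y$-coordinate is $\gamma x^{l/n}$, Lipschitz by 3), and the $z$-coordinate is $\gamma\,\partial_v z_\tau=(\gamma x^{l/n})\,\rho$, which one argues is Lipschitz by applying Lemma~\ref{lem:funny} with $g=p_{\tau,*}(\partial/\partial v)\in L_{x^{l/n}}$, $f=\gamma x^{l/n}$ and $h=x^{l/n}$. Summing, $p_{\tau,*}(w)$ is Lipschitz.

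For ``$\Rightarrow$'' suppose $p_{\tau,*}(w)$ is Lipschitz. Its $t$- and $x$-coordinates are $\alpha$ and $\beta$, so 1) holds and $\beta$ is Lipschitz; comparing the $x$-coordinate at $p_\tau(x,v,t)$ with its value at the point $p_\tau(0,0,t)\in T$, where it vanishes by tangency to $T$, and using $\|p_\tau(x,v,t)-p_\tau(0,0,t)\|\sim|x|$ from \eqref{eq:i=j-qw}, gives $|\beta|\lesssim|x|$, so 2) holds. By the ``$\Leftarrow$'' argument above, which for these two pieces uses only 1) and 2) (already established), $p_{\tau,*}(\alpha\,\partial/\partial t)$ and $p_{\tau,*}(\beta\,\partial/\partial x)$ are Lipschitz; subtracting them from $p_{\tau,*}(w)$ leaves $p_{\tau,*}(\gamma\,\partial/\partial v)$ Lipschitz, and its $y$-coordinate is exactly $\gamma x^{l/n}$, so 3) holds.

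The one step that is not entirely routine is the Lipschitz-ness of the $z$-coordinate $(\gamma x^{l/n})\,\rho$ in the ``$\Leftarrow$'' direction: the difficulty is to control this product against the weight $|x_2|^{l/n}$ attached to $|v_1-v_2|$ in \eqref{eq:qw-lipschitz} (an issue only when $l>n$), and here one must use, beyond 3), the vanishing of $\gamma x^{l/n}$ on $T$ forced by tangency to $T$ — this plays the role that the condition $|\delta|\lesssim|b|$ plays in the polar-wedge Proposition~\ref{prop:LVFcriterion1}. All the auxiliary facts — that $\partial_t y,\,\partial_t z_\tau,\,x\,\partial_x y,\,x\,\partial_x z_\tau$ are bounded and Lipschitz on $\QW_\tau$, that $\partial_v z_\tau=x^{l/n}\rho$ with $\rho$ bounded analytic, and the estimate $\|p_\tau(x,v,t)-p_\tau(0,0,t)\|\sim|x|$ — are immediate from \eqref{eq:q-wing-parameterization}, \eqref{eq-qwroot}, Proposition~\ref{prop:deriv-bounded} and Proposition~\ref{prop:LVF1-qw}, exactly as in the polar-wedge case.
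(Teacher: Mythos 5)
Your overall route is exactly the one the paper intends (the paper gives no detailed argument for this proposition: it is stated with a \,$\Box$\, and the remark that the proof is ``similar and simpler'' than those of Propositions \ref{prop:LVFcriterion1} and \ref{prop:LVFcriterion2}, resting on Lemma \ref{lem:funny} and Proposition \ref{prop:LVF1-qw}), and your treatment of the $\Rightarrow$ direction and of the $\alpha$- and $\beta$-summands in the $\Leftarrow$ direction is fine. The problem is precisely at the step you yourself single out, and your proposed resolution of it does not work. To apply Lemma \ref{lem:funny} to the $z$-coordinate $\gamma\,\partial_v z_\tau=(\gamma x^{l/n})\rho$ with $h=x^{l/n}$ you need $|f|=|\gamma x^{l/n}|\lesssim |x^{l/n}|$, i.e.\ $|\gamma|$ bounded. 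Tangency to $T$ together with the Lipschitz property of $\gamma x^{l/n}$ only gives $|\gamma x^{l/n}|\lesssim \dist(\cdot,T)\sim |x|$, i.e.\ $|\gamma|\lesssim |x|^{(n-l)/n}$; since $l\ge n$ one has $|x|^{l/n}\le |x|$, so this bound goes in the wrong direction and is strictly weaker than boundedness of $\gamma$ whenever $l>n$ (for instance $\gamma=x^{(n-l)/n}$ satisfies 3) and vanishes against $x^{l/n}$ on $T$, yet is unbounded). So the bound you invoke does not play the role that $|\delta|\lesssim|b|$ plays in Proposition \ref{prop:LVFcriterion1}.

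And the step genuinely needs such a bound: $\rho=\partial_v z_\tau/x^{l/n}$ is bounded by Proposition \ref{prop:deriv-bounded}, but it is in general \emph{not} Lipschitz for the quasi-wing metric, because only first-order derivatives of the root are controlled; its $v$-variation is merely $O(|v_1-v_2|)$, not $O(|v_1-v_2|\,|x|^{l/n})$. Hence in the difference $(\gamma x^{l/n})(q_1)\bigl(\rho(q_1)-\rho(q_2)\bigr)$ the term $|\gamma x^{l/n}|\,|v_1-v_2|$ must be dominated by $|v_1-v_2|\,|x_2|^{l/n}$ from \eqref{eq:qw-lipschitz}, which forces $|\gamma|\lesssim 1$; with only $|\gamma x^{l/n}|\lesssim|x|$ the estimate fails when $l>n$ and $\rho$ depends nontrivially on $v$. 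The correct analogue of the polar-wedge hypothesis (compare also the weakened condition 3') the paper uses in Section 9) is ``$|\gamma|$ bounded and $\gamma x^{l/n}$ satisfies \eqref{eq:qw-lipschitz}''; boundedness of $\gamma$ does hold in all the paper's applications (there either $\gamma\equiv 0$ or the field comes from bounded coefficients on the wedge), but it is not a consequence of your conditions 1)--3) plus tangency to $T$, so as written this step of your $\Leftarrow$ argument is a genuine gap: you must either add that hypothesis or supply a different argument for the Lipschitz property of $(\gamma x^{l/n})\rho$.
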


\begin{prop}\label{prop:LVFcriterion2-qw}
A vector field  on $\QW_\tau \cup \QW_\nu$ given by 
$p_{\tau*}(w_\tau)$, $p_{\nu*}(w_\nu)$ is stratified  Lipschitz  iff: \\
 0) $p_{\tau*}(w_\tau)$ and $p_{\nu*}(w_\nu)$ are  Lipschitz;\\
1) $ \alpha _\tau, \alpha_\nu $ satisfy \eqref{eq:2qw-lipschitz};\\
2) $ \beta_\tau, \beta_ \nu$  satisfy \eqref{eq:2qw-lipschitz};\\
3) $ \gamma_\tau x^{l/n},  \gamma_\nu x^{l/n}$  satisfy \eqref{eq:2qw-lipschitz}.  
\qed 
\end{prop}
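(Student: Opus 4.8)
The plan is to argue exactly as in the proof of Proposition~\ref{prop:LVFcriterion2}, which itself copies that of Proposition~\ref{prop:LVFcriterion1}, with the distance estimates of Proposition~\ref{prop:distances} replaced by those of Proposition~\ref{prop:distances-qw} and the elementary Lipschitz vector fields of Proposition~\ref{prop:LVF2} by those of Proposition~\ref{prop:LVF1-qw}. Throughout one works with the two nicely-situated quasi-wings over a common allowable sector, so that $y=y(u,t)+u^lv$ is literally the same function on both (see \eqref{eq:q-wing-parameterization}). The underlying principle, recorded after Proposition~\ref{prop:distances-qw}, is that a function on $\QW_\tau\cup\QW_\nu$ is Lipschitz iff its restrictions to the two wings are Lipschitz and the cross estimate \eqref{eq:2qw-lipschitz} holds, and that once the restrictions are Lipschitz one may, exactly as in \eqref{eq:twowedgelipschitzprime} and by the factorization in \eqref{eq:ij-qw}, replace \eqref{eq:2qw-lipschitz} by the single comparison $|h_\tau(x,v,t)-h_\nu(x,v,t)|\lesssim|x|^{r_{\tau\nu}/n}$. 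Applying this to each of the four coordinate functions of $p_{k*}(w_k)$, $k=\tau,\nu$, makes necessity immediate: (0) is Proposition~\ref{prop:LVFcriterion1-qw} on each wing; (1) and (2) come from the $t$- and $x$-coordinates, the bound $|\beta_k|\lesssim|x|$ following from \eqref{eq:i=j-qw} (with $x_2=0$) and the Lipschitz property between $p_k(x,v,t)$ and $p_k(0,v,t)$; and (3) follows after subtracting the (automatically Lipschitz, see below) fields $p_{k*}(\alpha_k\partial_t)$ and $p_{k*}(\beta_k\partial_x)$, since what remains is $p_{k*}(\gamma_k\partial_v)$ whose $y$-coordinate is exactly $\gamma_k\,\partial_v y=\gamma_k x^{l/n}$.

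For the sufficiency direction I would peel off the three types of terms in turn. First, as $\alpha_\tau,\alpha_\nu$ satisfy \eqref{eq:2qw-lipschitz} and all coordinates of $p_{k*}(\partial_t)$ are Lipschitz on $\QW_\tau\cup\QW_\nu$ by Proposition~\ref{prop:LVF1-qw}, the products $\alpha_k\,\partial_t y$ and $\alpha_k\,\partial_t z_k$ are Lipschitz on the union, so $p_{k*}(\alpha_k\partial_t)$ is stratified Lipschitz and may be subtracted, reducing to $\alpha_k\equiv0$. Next, writing each coordinate of $p_{k*}(\beta_k\partial_x)$ as $\beta_k\cdot(\text{coordinate of }p_{k*}(x\partial_x))/x$ and applying Lemma~\ref{lem:funny} with $h=x$ — legitimate since $|\beta_k|\lesssim|x|$ and since the coordinates of $p_{k*}(x\partial_x)$ are divisible by $u^n$ — shows $p_{k*}(\beta_k\partial_x)$ is Lipschitz; subtracting it reduces to $\beta_k\equiv0$.

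The remaining term $p_{k*}(\gamma_k\partial_v)$ is the step I expect to be the main obstacle. The structural input is Proposition~\ref{prop:deriv-bounded}: by \eqref{eq-qwroot} one has $\partial_v z_\tau=x^{l/n}\Theta_\tau$ with $\Theta_\tau$ analytic and bounded, and $\partial_v y=x^{l/n}$, so that every coordinate of $p_{k*}(\gamma_k\partial_v)$ is $(\gamma_k x^{l/n})$ times a bounded analytic factor. On a single wing this is handled by Lemma~\ref{lem:funny} with $h=x^{l/n}$ and $g$ the corresponding coordinate of $p_{k*}(\partial_v)$, using condition (3) via \eqref{eq:qw-lipschitz}. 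The delicate point is the cross estimate: after reduction to a common $(x,v,t)$ one must bound $|(\gamma_\tau x^{l/n})\Theta_\tau-(\gamma_\nu x^{l/n})\Theta_\nu|$ by $|x|^{r_{\tau\nu}/n}$. Splitting it as $\Theta_\nu(\gamma_\tau x^{l/n}-\gamma_\nu x^{l/n})+(\gamma_\tau x^{l/n})(\Theta_\tau-\Theta_\nu)$, the first summand is controlled directly by condition (3); the second equals $\gamma_\tau\,\partial_v(z_\tau-z_\nu)$ and must be controlled using $z_\tau-z_\nu\sim u^{r_{\tau\nu}}$ from \eqref{eq:generalformpairs2} (hence $\partial_v(z_\tau-z_\nu)=O(u^{r_{\tau\nu}})$, as in the proof of Proposition~\ref{prop:LVF1-qw}) together with the bounds on $\gamma_\tau$ forced by (0) and the vanishing of $\gamma_k x^{l/n}$ on $T$ (tangency of the stratified field to $T$ gives $|\gamma_k x^{l/n}|\lesssim|x|$). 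As in the proof of \eqref{eq:ij2}, it is cleanest here to argue by cases according to which of the four terms in \eqref{eq:2qw-lipschitz} is dominant, absorbing the units at the end by taking $|v_1-v_2|$ small compared to them. Finally, since $t$ enters only analytically, the parameterized statement reduces to the non-parameterized one exactly as via Corollary~\ref{cor:lipschitz} and the discussion of Subsection~\ref{ss:lip-strat-nonpar}.
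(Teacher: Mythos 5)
Your route is the one the paper intends (its own proof is only the remark that the argument is ``similar and simpler'' than those of Propositions \ref{prop:LVFcriterion1} and \ref{prop:LVFcriterion2}): reduce the cross estimate to equal $(x,v,t)$ via Proposition \ref{prop:distances-qw}, peel off the $\frac{\partial}{\partial t}$- and $\frac{\partial}{\partial x}$-parts using Proposition \ref{prop:LVF1-qw} and Lemma \ref{lem:funny} (with $h=x$, using $|\beta_k|\lesssim|x|$), and then treat the remaining $\frac{\partial}{\partial v}$-part using \eqref{eq-qwroot} and \eqref{eq:generalformpairs2}. The necessity direction and the $\alpha$- and $\beta$-reductions are correct as you write them.

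The step you yourself flag as delicate is, however, not closed by the bounds you invoke. For the term $\gamma_\tau\,\partial_v(z_\tau-z_\nu)$ you need, after the reduction to equal $(x,v,t)$, an estimate by $|x|^{r_{\tau\nu}/n}$; since $\partial_v(z_\tau-z_\nu)=O(x^{r_{\tau\nu}/n})$ is optimal in general, this amounts to $|\gamma_\tau|\lesssim 1$. But the only bound you establish is $|\gamma_k x^{l/n}|\lesssim|x|$ (from the vanishing on $T$ and condition 3)), i.e.\ $|\gamma_k|\lesssim|x|^{1-l/n}$, which yields $|\gamma_\tau\,\partial_v(z_\tau-z_\nu)|\lesssim|x|^{1-l/n+r_{\tau\nu}/n}$; this is $\lesssim|x|^{r_{\tau\nu}/n}$ only when $l=n$, whereas in general $l>n$. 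The same issue occurs in your single-wing use of Lemma \ref{lem:funny} with $h=x^{l/n}$: its hypothesis $|f|\lesssim|h|$ for $f=\gamma_k x^{l/n}$ is again the boundedness of $\gamma_k$ (that particular step is in any case redundant, since condition 0) already gives Lipschitzness of $p_{k*}(\gamma_k\frac{\partial}{\partial v})$ on each wing after subtracting the $\frac{\partial}{\partial t}$- and $\frac{\partial}{\partial x}$-parts). So the missing ingredient is precisely the quasi-wing analogue of the bound $|\delta|\lesssim|b|$ appearing in Proposition \ref{prop:LVFcriterion1}: you must either extract $|\gamma_k|\lesssim1$ from hypothesis 0) (e.g.\ from the Lipschitz estimate of the $z$-coordinate in the $v$-direction, using $\partial_v z_k=x^{l/n}(\tilde\psi_k+v\,\partial_v\tilde\psi_k)$, which works where $\partial_v$ of that factor does not degenerate) or make such a bound explicit in the reduction; as written, the key inequality at the ``delicate point'' does not follow from 0)--3) and the estimates you state.
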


We now consider the extension of Lipschitz vector fields on quasi-wings.  
The classical McShane-Whitney extension theorem, \cite[Theorem 1]{mcshane1934}, \cite[the footnote on p. 63]{whitney34}, says that 
a Lipschitz function $\varphi: A\to \R $ defined on any nonempty subset $A$ 
of a metric space $B$ admits a Lipschitz extension $\tilde \varphi$ to $B$ with the same 
Lipschitz constant.  (Such an extension can be even given by a formula $\tilde \varphi (x)= \inf_{y\in A} (\varphi (x) + Lip(\varphi) d(x,y))$.) If $B\subset \R^n,$ then this theorem gives an extension of Lipschitz vector fields with the Lipschitz constant multiplied by $\sqrt n$.  The Kirszbraun Theorem, see e.g.   
\cite[p. 202]{federerbook}, shows the existence of an extension of vector fields 
with the same Lipschitz constant. In our case we can use any of these results.  By Proposition \ref{prop:LVFcriterion1-qw},  
$w\to p_* (w) $ gives a one-to-one correspondence between  Lipschitz vector fields on $\QW$ and Lipschitz vector fields 
$w(x,y,t)$ on the wedge $\W$.  Hence  the McShane-Whitney extension theorem implies the following.

\begin{cor}[Extension of Lipschitz vector fields on a quasi-wing]\label{cor:extension-QW1}
Any stratified Lipschitz  vector field defined on subset of a quasi-wing $\QW$ containing the stratum $T=\{x=0\}$ can be extended to a stratified Lipschitz  vector field on $\QW$.
\qed  
\end{cor}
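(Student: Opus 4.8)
The plan is to transport the problem, via the dictionary of Proposition~\ref{prop:LVFcriterion1-qw}, to an extension problem for ordinary scalar Lipschitz functions on $\QW$, which is then solved by the McShane--Whitney extension theorem. Fix an allowable sector and write $\QW=\QW_\tau$, parameterized by $p_\tau(x,v,t)$ as in \eqref{eq:q-wing-parameterization}. Let $V$ be a stratified Lipschitz vector field on a subset $A\subseteq\QW$ with $T\subseteq A$, and write $V=p_{\tau*}(w)$ on $p_\tau^{-1}(A)$ with $w=\alpha\,\partial_t+\beta\,\partial_x+\gamma\,\partial_v$. By Proposition~\ref{prop:LVFcriterion1-qw} the functions $\alpha$, $\beta$ and $\delta:=\gamma\,x^{l/n}$ all satisfy \eqref{eq:qw-lipschitz} on $p_\tau^{-1}(A)$, and $|\beta|\lesssim|x|$; equivalently, by Proposition~\ref{prop:distances-qw}, $\tilde\alpha:=\alpha\circ p_\tau^{-1}$, $\tilde\beta:=\beta\circ p_\tau^{-1}$ and $\tilde\delta:=\delta\circ p_\tau^{-1}$ are genuine Lipschitz functions on $A$ for the metric induced from $\C^{3+l}$, and $\tilde\beta$, $\tilde\delta$ vanish on $T=\{x=0\}\cap\QW$.

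\textbf{Extension.} Applying the McShane--Whitney theorem componentwise (to real and imaginary parts), extend $\tilde\alpha,\tilde\beta,\tilde\delta$ to Lipschitz functions $\hat\alpha,\hat\beta,\hat\delta$ on all of $\QW$, with Lipschitz constants bounded by a universal multiple of the original ones (this control also yields the quantitative form of the extension needed later in Section~\ref{sec:LVEC}). Since these extensions restrict to the original functions on $A\supseteq T$, the functions $\hat\beta$ and $\hat\delta$ still vanish on $T$. Pulling back, set $\alpha^\ast:=\hat\alpha\circ p_\tau$, $\beta^\ast:=\hat\beta\circ p_\tau$, $\delta^\ast:=\hat\delta\circ p_\tau$; because $p_\tau$ collapses $\{x=0\}$ onto $T$, these are automatically independent of $v$ at $x=0$, and $\beta^\ast,\delta^\ast$ vanish at $x=0$. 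Moreover $\dist(p_\tau(x,v,t),T)\sim|x|$ (take $x_2=0$ in \eqref{eq:i=j-qw}), so vanishing of $\hat\beta$ on $T$ upgrades to $|\beta^\ast|\lesssim|x|$. Now put $w^\ast:=\alpha^\ast\,\partial_t+\beta^\ast\,\partial_x+(\delta^\ast/x^{l/n})\,\partial_v$ and $V^\ast:=p_{\tau*}(w^\ast)$. Since $p_{\tau*}(\partial_v)$ vanishes identically on $\{x=0\}$ and $\delta^\ast$ vanishes there, $V^\ast$ is a well-defined vector field on $\QW$, tangent to $T$; and $\alpha^\ast,\beta^\ast,\delta^\ast$ satisfy conditions 1), 2), 3) of Proposition~\ref{prop:LVFcriterion1-qw} by construction, so $V^\ast$ is stratified Lipschitz. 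Finally $V^\ast|_A=V$ because $\hat\alpha|_A=\tilde\alpha$, $\hat\beta|_A=\tilde\beta$, $\hat\delta|_A=\tilde\delta$. This furnishes the required extension.

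\textbf{The delicate point.} The argument is essentially bookkeeping; the one step deserving a moment's care is that condition 3) of Proposition~\ref{prop:LVFcriterion1-qw} is phrased for $\gamma\,x^{l/n}$ rather than for $\gamma$, which is precisely what makes the division by $x^{l/n}$ in $w^\ast$ harmless: by \eqref{eq-qwroot} the vector field $p_{\tau*}(\partial_v)/x^{l/n}$ has $\partial_y$-component $1$ and $\partial_z$-component a bounded analytic function, hence is bounded and Lipschitz on the germ $\QW$, so $p_{\tau*}(\gamma^\ast\partial_v)=\delta^\ast\cdot\bigl(p_{\tau*}(\partial_v)/x^{l/n}\bigr)$ is Lipschitz as soon as $\delta^\ast$ is. Alternatively, one could extend $V$ directly as an ambient Lipschitz vector field by Kirszbraun's theorem and then check, using the bi-Lipschitz projection $\pi|_{\QW}\colon\QW\to\W$ of property~(4), that any such extension is automatically tangent to $\QW$ and to $T$; but the componentwise construction above keeps the stratified structure and the Lipschitz constants most transparent, and it is the one prepared by the remarks preceding the statement.
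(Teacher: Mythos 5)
Your proof is correct and follows essentially the same route as the paper, which obtains the corollary directly from the one-to-one correspondence of Proposition \ref{prop:LVFcriterion1-qw} between stratified Lipschitz vector fields on $\QW$ and Lipschitz data on the wedge $\W$, followed by the McShane--Whitney extension theorem; your coefficientwise bookkeeping with $\alpha$, $\beta$, $\delta=\gamma x^{l/n}$ is just a spelled-out version of that correspondence. The only blemish is the side remark in your ``delicate point'': the vector field $p_{\tau*}(\partial_v)/x^{l/n}$ is in general \emph{not} Lipschitz on $\QW$ (its $\partial_z$-component $\tilde\psi+v\,\partial_v\tilde\psi$ is analytic in $(u,v,t)$ but need not satisfy \eqref{eq:qw-lipschitz}, since the parameterization contracts the $u$- and $v$-directions), but this paragraph is superfluous because the implication you need is exactly the ``if'' direction of Proposition \ref{prop:LVFcriterion1-qw}, which you already invoke.
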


Propositions \ref{prop:LVFcriterion1-qw}, \ref{prop:LVFcriterion2-qw} imply the following.

\begin{cor}[Extension of Lipschitz vector fields between the quasi-wings]\label{cor:extension-QW2}
Let $\QW_\tau, \QW_\nu$ be nicely-situated quasi-wings parameterized by 
$p_\tau (x,v,t)$ and $p_\nu (x,v,t)$ respectively.  
Let the vector field $w $, of the form \eqref{eq:vectorfieldw}, be  such that 
$p_{\tau*}(w)$ is a stratified Lipschitz vector field defined on the image of $p_{\tau}$. 
Then  $p_{\tau*}(w)$, $p_{\nu*}(w)$, define a stratified Lipschitz vector field on the union  
$\QW_\tau \cup \QW_\nu$.  
\qed  
\end{cor}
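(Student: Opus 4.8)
The plan is to deduce the statement from the single-wing criterion (Proposition \ref{prop:LVFcriterion1-qw}) and the two-wing criterion (Proposition \ref{prop:LVFcriterion2-qw}), using crucially that $\QW_\tau$ and $\QW_\nu$, being nicely-situated, lie over one and the same regular wedge $\W_q$. First I would make explicit the consequence of being nicely-situated: both quasi-wings are parameterized over the same allowable sector by the same three variables $(x,v,t)$, with the same exponent $l$ appearing in \eqref{eq:q-wing-parameterization}, the parameterizations $p_\tau$ and $p_\nu$ differing only in their $z$-components $z_\tau$ and $z_\nu$. In particular the single coefficient field $w = \alpha\frac{\partial}{\partial t}+\beta\frac{\partial}{\partial x}+\gamma\frac{\partial}{\partial v}$ of \eqref{eq:vectorfieldw} produces $p_{\tau*}(w)$ and $p_{\nu*}(w)$ with literally the same functions $\alpha,\beta,\gamma$, so in the notation of Proposition \ref{prop:LVFcriterion2-qw} one has $\alpha_\tau=\alpha_\nu=\alpha$, $\beta_\tau=\beta_\nu=\beta$, $\gamma_\tau=\gamma_\nu=\gamma$.

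Next I would check condition 0) of Proposition \ref{prop:LVFcriterion2-qw}: that $p_{\nu*}(w)$ is by itself a stratified Lipschitz vector field on $\QW_\nu$. First, $p_{\nu*}(w)$ is well defined and tangent to $T$: since $y(u,v,t)=O(u^n)$ and $z_\nu(u,v,t)=O(u^n)$ one has $p_\nu(0,v,t)=(0,0,0,t)$, and $p_{\nu*}(\frac{\partial}{\partial v})$ vanishes on $\{x=0\}$, so well-definedness and tangency to $T$ reduce to conditions on $\alpha,\beta$ alone (namely $\alpha(0,v,t)$ independent of $v$ and $\beta(0,v,t)=0$) --- exactly the ones already guaranteed by the hypothesis on $p_{\tau*}(w)$. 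Second, by Proposition \ref{prop:LVFcriterion1-qw} the stratified Lipschitzness of $p_{\tau*}(w)$ on $\QW_\tau$ is equivalent to: $\alpha$ satisfies \eqref{eq:qw-lipschitz}; $|\beta|\lesssim|x|$ and $\beta$ satisfies \eqref{eq:qw-lipschitz}; and $\gamma x^{l/n}$ satisfies \eqref{eq:qw-lipschitz}. These are conditions on $\alpha,\beta,\gamma$ and the exponent $l$ only, and $l$ is common to $\QW_\tau$ and $\QW_\nu$; hence Proposition \ref{prop:LVFcriterion1-qw} applied to $p_{\nu*}(w)$ shows at once that it is stratified Lipschitz on $\QW_\nu$. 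This gives 0).

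Finally I would verify conditions 1)--3) of Proposition \ref{prop:LVFcriterion2-qw}, i.e. that each of the three functions $\alpha$, $\beta$, $\gamma x^{l/n}$ satisfies the two-wing estimate \eqref{eq:2qw-lipschitz} (which is what 1)--3) demand, since $\alpha_\tau=\alpha_\nu$, $\beta_\tau=\beta_\nu$ and $\gamma_\tau x^{l/n}=\gamma_\nu x^{l/n}$). By the previous step each such function $h$ already satisfies the stronger single-wing estimate \eqref{eq:qw-lipschitz}, that is $|h(x_1,v_1,t_1)-h(x_2,v_2,t_2)|\lesssim |t_1-t_2|+|x_1-x_2|+|v_1-v_2|\,|x_2|^{l/n}$; since the right-hand side of \eqref{eq:2qw-lipschitz} dominates this bound term by term --- it carries the extra nonnegative summand $|x_2|^{r_{\tau\nu}/n}$ --- the estimate \eqref{eq:2qw-lipschitz} holds for $h$. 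Applying Proposition \ref{prop:LVFcriterion2-qw} then yields that $p_{\tau*}(w)$, $p_{\nu*}(w)$ define a stratified Lipschitz vector field on $\QW_\tau\cup\QW_\nu$.

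I do not expect a genuine analytic obstacle here: the argument is essentially bookkeeping on top of the two wing-criteria and the shared parameterization. The one point that must be handled with care --- and that I would spell out rather than leave implicit --- is that "nicely-situated" really does supply a genuine common coordinate system $(x,v,t)$, a common exponent $l$, and a common allowable sector for both quasi-wings, so that "the same $w$" is meaningful and the single-wing criterion of Proposition \ref{prop:LVFcriterion1-qw} transfers verbatim from $\QW_\tau$ to $\QW_\nu$.
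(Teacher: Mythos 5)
Your proof is correct and takes essentially the same route as the paper, which states the corollary as an immediate consequence of the wing criteria (Propositions \ref{prop:LVFcriterion1-qw} and \ref{prop:LVFcriterion2-qw}) without written detail. Your write-up simply makes explicit what the paper leaves implicit: nicely-situated wings share the base wedge data $(x,v,t)$, the exponent $l$ and the sector, so the identical coefficients $\alpha,\beta,\gamma$ satisfy the single-wing conditions on $\QW_\nu$ as well, and these conditions imply the two-wing estimates \eqref{eq:2qw-lipschitz} since their right-hand side only gains the extra term $|x_2|^{r_{\tau\nu}/n}$.
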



\section{Extension of Lipschitz vector fields from $\PW$ to an arc in its complement}

Suppose we are given a stratified Lipschitz vector field $w$ on $S$.  
By the first part of the proof of Theorem \ref{thm:maintheorem}, Section 
\ref{sec:proof-partI}, we may extend it to a Lipschitz vector field, still called $w$, onto $\PW$.  In this section we show how to extend it further on 
the image of a real analytic arc germ $p(s)$  of the form 
\eqref{A1} not included in $\PW$.  For this we use Corollary \ref{cor:existenceq-wing} to embedd $p(s)$ in a quasi-wing $\QW$ and extend 
the vector field from $\PW$ to $\QW$.  The latter extension is explained 
in Proposition \ref{prop:ext:pw-->qw}. 
In the process we encounter two problems, discussed below,  
related to the fact that the construction of Corollary \ref{cor:existenceq-wing} gives a 
quasi-wing after a linear change of coordinates.

If $\PW_i$ is a polar wedge in the original system of coordinates then we may choose the corresponding polar wedge in the new system 
of coordinates $x, y-b_0z,z,t$,  denoted  by  $\PW_{i,b_0}$, 
  included in $\PW_i$, but we cannot assume that it contains the spine of $\PW_i$, 
that is $C_i$.  Therefore, if we extend $w|\PW_{i,b_0} $ to $\QW$ using 
Proposition \ref{prop:ext:pw-->qw}, a priori there is no guarantee that the obtained 
vector field is Lipschitz on $\PW_i \cup \QW$.  To guarantee it we show that 
the distance from the arc $p(s)$, and hence from the whole quasi-wing $\QW$, 
to $\PW_i$ and to $\PW_{i,b_0} $ are of the same orders.  This will follow from 
Proposition \ref{prop:distancearctoCj}. 

The second problem comes from the fact that the description 
of stratified Lipschitz vector fields on a polar wedge, given in the conditions 
1)-3) of Proposition \ref{prop:LVFcriterion1},  change slightly when 
we pass from $\PW_i$ to $\PW_{i,b_0}$,  if $\PW_{i,b_0}$ does not contain $C_i$.   
Therefore to show  Proposition \ref{prop:ext:pw-->qw} one should not use the condition 3). To solve this problem we replace in the proof of Proposition \ref{prop:ext:pw-->qw} the condition 3) by a slightly weaker condition 3') that is satisfied on $\PW_{i,b_0}$.

 \subsection{Distance to polar wedges}

\begin{prop}\label{prop:distancearctoCj}
Let $\gamma(s) =(x(s), y(s), z(s), t(s))$, $s\in [0,\varepsilon)$, be a real analytic arc at the origin. 
If $\gamma (s) \not \subset \PW $ then for all $j$, 
$$\dist (\gamma (s), C_j) \gtrsim \|(x(s), y(s), z(s))\|^{m_j/n}.$$
\end{prop}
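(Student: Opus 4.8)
The plan is to reduce the statement to a distance estimate that can be read off from the parameterizations of the polar wedges. First I would observe that it suffices to bound $\dist(\gamma(s), \PW_j)$ from below, since $C_j \subset \PW_j$ means $\dist(\gamma(s),C_j) \ge \dist(\gamma(s),\PW_j)$ would go the wrong way; instead I will directly estimate the distance to $C_j$ using that $C_j$ is parameterized by $p_j(u,0,t)$ and that every point of $\PW_j$ is of the form $p_j(u,b,t)$ with, by Proposition~\ref{prop:quadratic-t}, $\|p_j(u,b,t) - p_j(u,0,t)\| \sim |b|\,|u|^{m_j/n}$ (up to units). Write $x(s) = u(s)^n$ over an allowable sector, so $\|(x(s),y(s),z(s))\| \sim |u(s)|^n$ by the transversality assumptions (all of $x,y,z$ are $O(u^n)$), and set $\rho(s) := |u(s)|$.

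The key step is the following dichotomy. Fix $j$ and let $q(s) \in C_j$ be a point realizing (up to a constant) $\dist(\gamma(s), C_j)$, say $q(s) = p_j(u'(s),0,t'(s))$. If the order in $s$ of $\dist(\gamma(s),C_j)$ is already $\le \tfrac{m_j}{n}\ord_s \rho(s)$, we are done. So suppose for contradiction that $\dist(\gamma(s),C_j) \ll \rho(s)^{m_j}$. Then in particular $|x(s) - u'(s)^n| \ll \rho(s)^{m_j}$ and $|t(s) - t'(s)| \ll \rho(s)^{m_j}$, so after reparameterizing we may take $u'(s) = u(s)$, $t'(s) = t(s)$ and compare $\gamma(s)$ with $p_j(u(s),b,t(s))$ for $b$ ranging over a small disk. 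The point is that $\gamma(s)$ is then \emph{inside} the polar wedge $\PW_j$: since $\|p_j(u,b,t(s)) - p_j(u,0,t(s))\| \sim |b|\,\rho(s)^{m_j}$ sweeps out, as $|b| < \varepsilon$, a full neighborhood (in the $(y,z)$-directions transverse to $C_j$) of size $\varepsilon \rho(s)^{m_j}$ around $q(s)$, and $\gamma(s)$ lies within distance $\ll \rho(s)^{m_j}$ of $q(s)$, there is a choice $b(s)$ with $|b(s)| < \varepsilon$ and $\gamma(s) = p_j(u(s), b(s), t(s))$, i.e. $\gamma(s) \subset \PW_j \subset \PW$, contradicting the hypothesis $\gamma(s) \not\subset \PW$.

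The main obstacle I anticipate is making the last paragraph's "sweeps out a full transverse neighborhood" rigorous: one must check that the map $b \mapsto (y_j(u,b,t), z_j(u,b,t))$, for $u,t$ fixed and small, is a bi-Lipschitz parameterization onto its image with bi-Lipschitz constants comparable to $\rho^{m_j}$ in the relevant directions, and that the "missing" ambient directions ($x$ and $t$) are controlled by the already-established estimates $|x(s)-u(s)^n|, |t(s)-t(s)| \ll \rho(s)^{m_j}$. This is exactly the content of the distance formula \eqref{eq:i=j} in Proposition~\ref{prop:distances} (with $b_1 = 0$), which gives $\|p_j(u,0,t) - p_j(u,b,t)\| \sim |b|\,|u|^{m_j/n}$; combined with the fact that $b \mapsto b\,\psi_j(u,b,t)$ is bi-Lipschitz for $\psi_j$ a unit (used already in Step 3 of the proof of Proposition~\ref{prop:distances}), the surjectivity onto a disk of radius $\sim \varepsilon \rho^{m_j}$ follows from an elementary degree/winding argument or the inverse function theorem applied uniformly over the sector. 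One subtlety to handle carefully: the branch $C_j$ might be a component of $\Sigma_f$ rather than a genuine polar curve, but in that case $p_j$ is independent of $b$, $m_j$ is irrelevant, and Lemma~\ref{lem:distance_to_Sigma} (or rather its method) already handles the required lower bound — so I would dispose of that case first and then assume $\PW_j$ is a genuine polar wedge.
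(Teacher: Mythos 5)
The central step of your argument --- that closeness $\ll \rho(s)^{m_j}$ to $C_j$ forces $\gamma(s)$ into $\PW_j$ because the wedge ``sweeps out a full transverse neighborhood of size $\varepsilon\rho^{m_j}$'' --- is a genuine gap, and it is exactly the point the paper has to work hardest to handle. The polar wedge $\PW_j$ is only a full-dimensional germ \emph{inside the hypersurface} $\mathcal X$: for fixed $x$ and $t$ its slice is the one-complex-dimensional piece of curve $b\mapsto (y_j(x,b,t),z_j(x,b,t))$, whereas the set of points of $\mathcal X$ at distance $\ll \rho^{m_j}$ from $q(s)$ contains points lying on \emph{other local sheets} of $\mathcal X$ that hug $C_j$ closely; such points are not in the image of the $b$-disk, and their tangent planes need not contain any direction $(0,b,1)$ with $|b|<\varepsilon$, so they need not lie in $\PW_j$ at all. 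Proposition \ref{prop:distances} with $b_1=0$ only measures distances \emph{between points of the parameterized wedge}; it gives no surjectivity onto a metric neighborhood, and no winding-number or inverse-function argument in the $b$-variable can produce one, because the relevant $\varepsilon\rho^{m_j}$-disk in the $(y,z)$-directions is not contained in $\mathcal X$. In fact the paper explicitly warns, after Lemma \ref{lem:distancearctoS}, that even when $C_i$ realizes the distance of $\gamma$ to $S$ and that distance is $\ll s^{m_i}$, one can only conclude $\gamma\subset\PW$ (the union), \emph{not} $\gamma\subset\PW_i$; proving even that weaker statement requires embedding $\gamma$ in a quasi-wing after the coordinate change $Y=y-b_0z$ (Corollary \ref{cor:existenceq-wing}), the disjointness of quasi-wings from polar wedges coming from the bounded-derivative property (Proposition \ref{prop:deriv-bounded}), an implicit-function-theorem dichotomy on the Jacobian of the intersection equations, and, in the degenerate case, the Transversality Assumption that $x=0$ is not a limit of tangent planes. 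None of this is replaced by your sketch.

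A secondary consequence: because your wedge-sweeping claim was meant to treat every $j$ uniformly, your proposal also omits the part of the paper's proof that handles the branches \emph{not} realizing the distance to $S$. There the paper argues combinatorially: if $\dist(\gamma(s),C_j)\ll s^{m_j}$ for some $j\ne i$ while $\dist(\gamma(s),C_i)\gtrsim s^{m_i}$, then $m_i>m_j$, hence $k_{ij}\le m_j$ by Remark \ref{rmk:rem-mi-mj-kij}, and the triangle inequality along the two polar parameterizations gives $s^{m_j}\lesssim s^{k_{ij}}\lesssim \dist(C_i,\gamma(s))+\dist(C_j,\gamma(s))\ll s^{m_j}$, a contradiction; and when the distance to $S$ is realized by a singular branch one invokes Lemma \ref{lem:distance_to_Sigma}. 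So the correct proof has a two-case structure (closest branch via the quasi-wing/tangent-plane lemma, the other branches via contact exponents) that your proposal would need to rebuild once the sweeping step is removed.
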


\begin{rmk}
If the arc $\gamma$ is of the form $\gamma(s) =(s^n, y(s), z(s))$ with $y(s) = O(s^n), z(s)=O(s^n)$, that we may suppose, then we get that $\dist (\gamma (s), C_j) \gtrsim |s ^{m_j}|$.
\end{rmk}

For the proof of Proposition \ref{prop:distancearctoCj} we need the following lemma.  

\begin{lemma}\label{lem:distancearctoS}
If the polar set $C_i$ minimizes the distance of $\gamma$ to $S$ and if this distance satisfies 
\begin{align}\label{eq:condition}
\dist (\gamma(s) , S)=  \dist (\gamma(s) , C_i) \ll  
\|(x(s), y(s), z(s))\|^{m_i/n},  \end{align} 
then $\gamma (s)$ is contained, for small $s$, in $\PW$.  
\end{lemma}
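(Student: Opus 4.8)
The plan is to reduce the assertion, via a description of $\PW$ on the regular part of $\mathcal{X}$, to a single estimate on the slope $-f'_z/f'_y$ of the tangent planes along $\gamma$. First the reductions: if $\gamma(s)\in S$ for all $s$ then $\gamma\subset S\subset\PW$ and there is nothing to prove, so by analyticity we may assume $\gamma(s)\in\mathcal{X}_{reg}$ and $f'_z(\gamma(s))\neq0$ for all small $s\neq0$. After a ramification in $s$ and restriction to an allowable sector we may also take $\gamma(s)=(s^n,y(s),z(s),t(s))$ with $y(s),z(s)=O(s^n)$; in fact the hypothesis forces this form, since otherwise $\gamma$ would be transverse and stay at distance $\gtrsim|x(s)|\geq\|(x(s),y(s),z(s))\|^{m_i/n}$ (using $m_i\geq n$) from the discriminant branch of $C_i$, contradicting $\dist(\gamma(s),C_i)\ll\|(x(s),y(s),z(s))\|^{m_i/n}$. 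With $\gamma$ in this form $\|(x(s),y(s),z(s))\|\sim|s|^n$, and the hypothesis reads $\dist(\gamma(s),C_i)\ll|s|^{m_i}$.

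Next I would record the following description of $\PW$ on $\mathcal{X}_{reg}$. By \eqref{eq:F'_z} the critical locus of $\pi_b|_{\mathcal{X}_{reg}}$ is $\{bf'_y+f'_z=0\}\cap\mathcal{X}_{reg}$, and by the parameterization \eqref{eq:parameterization1-t} its union over $|b|<\varepsilon$ is exactly $\bigcup_{i\in I_C}\PW_i$ (intersected with $\mathcal{X}_{reg}$). Since a point $q\in\mathcal{X}_{reg}$ with $f'_z(q)\neq0$ lies on $\{bf'_y+f'_z=0\}$ for some $b$ precisely when $f'_y(q)\neq0$, and then only for $b=-f'_z(q)/f'_y(q)$, we get: for such $q$, $q\in\PW$ if and only if $f'_y(q)\neq0$ and $|f'_z(q)/f'_y(q)|<\varepsilon$. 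Consequently it is enough to prove that $b(s):=-f'_z(\gamma(s))/f'_y(\gamma(s))\to0$ as $s\to0$, for then $|b(s)|<\varepsilon$ and $\gamma(s)\in\PW$ for all small $s$.

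The core of the proof is the estimate $|f'_z(q)|\lesssim|f'_y(q)|\,\dist(q,C_i)/\|q\|^{m_i/n}$ for $q\in\mathcal{X}_{reg}$ near $0$ and near $C_i$: combined with $\dist(\gamma(s),C_i)\ll|s|^{m_i}\sim\|\gamma(s)\|^{m_i/n}$ it yields $|b(s)|\lesssim\dist(\gamma(s),C_i)/\|\gamma(s)\|^{m_i/n}\to0$, which finishes the argument. To establish the estimate I would first note $|f'_z(q)|\lesssim\dist(q,C_i)$, since $f'_z$ is locally Lipschitz and vanishes identically on $C_i$; it then remains to bound $|f'_y(q)|$ below by $\|q\|^{m_i/n}$ for $q$ close to $C_i$. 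On the spine this follows by differentiating $f\circ p_i$ twice in $b$ at $b=0$ and invoking \eqref{eq:generalform-t}, which gives $f'_y(p_i(u,0,t))\sim u^{m_i}f''_{zz}(p_i(u,0,t))$; when $C_i$ is a simple fold ($f''_{zz}$ not vanishing along it) this is $\sim|u|^{m_i}\sim\|p_i(u,0,t)\|^{m_i/n}$, and the Lipschitz property of $f'_y$ propagates the bound to the $\ll|u|^{m_i}$-neighbourhood of $C_i$.

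I expect the main obstacle to be the non-simple-fold case, where $f''_{zz}$ — and hence, by the identity above, $f'_y$ itself — vanishes along $C_i$: the crude Lipschitz transfer then fails, and one must instead compare the orders of vanishing of $f'_z$ and of $f'_y$ on the sheets of $\mathcal{X}$ meeting along $C_i$. The explicit normal form of Proposition \ref{prop:quadratic-t}, $z_i=z_i(\cdot,0)+bu^{m_i}\psi_i$ and $y_i=y_i(\cdot,0)+b^2u^{m_i}\varphi_i$ with $\psi_i,\varphi_i$ units, is what supplies this: along $\PW_i$ it forces $f'_y$ to vanish to order one and $f'_z$ to order two in the wedge parameter (both of order $m_i$ in $u$), and this asymmetry is exactly what produces the ratio $\dist(q,C_i)/\|q\|^{m_i/n}$ in the key estimate.
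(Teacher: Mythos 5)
Your reduction of the statement to the smallness of $b(s)=-f'_z(\gamma(s))/f'_y(\gamma(s))$ is legitimate (for regular points with $f'_z\neq 0$ membership in $\PW$ is exactly $|f'_z/f'_y|<\varepsilon$), and in spirit it matches what the paper's proof extracts in its second case (tangent planes along $\gamma$ containing $(0,b,1)$ with $b\to 0$). The gap is in the key estimate. Your lower bound $|f'_y(q)|\gtrsim\|q\|^{m_i/n}$ near $C_i$ rests on the identity $f'_y(p_i(u,0,t))=-f''_{zz}(p_i(u,0,t))\,u^{m_i}\psi_i(u,0,t)$ (which is correct, by differentiating $bf'_y+f'_z=0$ on the wedge and using \eqref{eq:relations} and \eqref{eq:generalform-t}), plus the assumption that $f''_{zz}$ is bounded below along the spine. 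But $f''_{zz}(0)=0$ whenever $\mathrm{mult}_0 f\ge 3$, so $f''_{zz}\to 0$ along the spine and $|f'_y(p_i(u,0,t))|=o(|u|^{m_i})$ for every such $f$: the ``simple fold'' case you treat is essentially only multiplicity two. Concretely, for the cone $f=x^3+y^3+z^3$ in generic linear coordinates one has $n=m_i=1$ while $f'_y\sim u^2$ and $f''_{zz}\sim u$ along a polar line, so the Lipschitz transfer from the spine gives nothing for arcs at distance between $|u|^{2}$ and $|u|^{1}=|u|^{m_i}$ from $C_i$ --- precisely the range the lemma must cover (the conclusion is still true there, but your mechanism cannot see it). The closing appeal to Proposition \ref{prop:quadratic-t} does not repair this: the orders of vanishing of $f'_y$ and $f'_z$ \emph{on} $\PW_i$ (where anyway $f'_z=-bf'_y$ identically) say nothing about the ratio $f'_z/f'_y$ at points \emph{off} the wedge, which is exactly what must be controlled since $\gamma$ is a priori not in $\PW$. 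Note also that your argument never uses the hypothesis that $C_i$ minimizes $\dist(\gamma(s),S)$; some use of the distance to all of $S$ (not just to $C_i$) is unavoidable, since an arc hugging a nearby branch of $\Sigma_f$ or another sheet can satisfy $\dist(\gamma(s),C_i)\ll s^{m_i}$ while $f'_y$ degenerates along it.

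For comparison, the paper proves the lemma by a different route: it complexifies $\gamma$, passes to the sheared coordinates $Y=y-b_0z$ and embeds $\gamma$ in a quasi-wing $\QW$ with the same exponent $m_i$ (Lemma \ref{lem:wing}, Corollary \ref{cor:existenceq-wing}, resting on the arc-wise analytic trivializations of \cite{PP17}), and then studies the contact system \eqref{eq:system} for $\PW_i\cap\QW$. If its Jacobian in $(b,v)$ is nonzero at the origin, the implicit function theorem produces a curve of intersection, contradicting the bounded-derivative property of quasi-wings (Proposition \ref{prop:deriv-bounded}); if it vanishes, the limits of tangent planes to $\mathcal X$ along $C_i$ and along $\gamma$ coincide, and the Transversality Assumption ($x=0$ not a limit of tangent planes) then yields tangent vectors $(0,b,1)$ with $b\to 0$ along $\gamma$, i.e.\ $\gamma\subset\PW$. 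If you want to salvage your approach, you would need to prove the required decay of $f'_z/f'_y$ along $\gamma$ by an argument of this kind (or by a genuinely new estimate using the full hypothesis), not by Lipschitz propagation of a spine bound that fails beyond multiplicity two.
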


By \eqref{eq:condition} we mean that there is  $\delta>0$ such that 
$$\dist (\gamma(s) , C_i ) \le \|(x(s), y(s), z(s))\|^{\delta + m_i/n} .$$

We do not claim in the lemma that $\gamma (s)$ has to belong to the polar wedge containing $C_i$, that is $\PW_i$.

\begin{proof}
We write the proof in the non-parameterized case. The proof in the parameterized case is similar.  

We may suppose that the arc $\gamma$ is of the form 
$\gamma(s) =(s^n, y(s), z(s))$ with $y(s) = O(s^n), z(s)=O(s^n)$ and note that in this case 
$\dist (\gamma(s) , C_i ) \sim |y(s)-y_i(s)| + |z(s)-z_i (s)| $.  Therefore, 
by \eqref{eq:condition}, 
$|y(s)-y_i(s)| = o (s^{m_i})$ and $|z(s)- z_i(s)| = o (s^{m_i})$.  
  Complexify $\gamma$ by setting $\gamma(u) =(u^n, y(u), z(u))$.  Then, 
as in the proof of Corollary \ref{cor:existenceq-wing}, we construct a quasi-wing $\QW$ containing $\gamma$ by changing the system of coordinates, that is replacing $y$ by $Y= y - b_0 z$, for $b_0$ sufficiently generic. 
 In this new system of coordinates $x,Y,z,t$ (we do not change the parameter $b$) the parameterizations of $\PW_i$ and $\QW$ are,  $x=u^n$ and, respectively,    
\begin{align}\label{eq:generalform-pw}
& Y_i (u,b)  = y_i(u,b) -b_0 z _i(u,b) \\
\notag 
& \qquad \quad  = (y_i(u) -b_0z_i (u)) + u^{m_i}
(b^2 \varphi_i (u,b) -bb_0 \psi_i (u,b)), \\
\notag
& z _i(u,b)  = z_i(u) + b u^{m_i} \psi_i (u,b). 
\end{align}
\begin{align}\label{eq:generalform-qw}
& Y(u,v) = (y(u) -b_0z (u)) + v u^{m_i}, \\
\notag
& z(u,v) = z(u) + v u^{m_i} \tilde \psi_i (u,v). 
\end{align}
To see that the exponent in the latter formula is $m_i$ note that, if we denote the polar set in $\PW_i$ in the new system of coordinates by $C_{i,b_0}$ then 
$\dist (\gamma(s) , C_{i,b_0}) \sim s^{m_i}$ and we conclude by  
Corollary \ref{cor:orders-l_i}.  
Now we argue as follows. By Proposition \ref{prop:deriv-bounded} the polar wedge $\PW_i$ and the quasi-wing $\QW$ are disjoint (if the constants defining them are small). But if the limit of tangent spaces to $\mathcal X$ along $C_i$ and along $\gamma$ do not coincide then the implicit function theorem forces $\PW_i$ and $\QW$ to intersect along a  curve and therefore this case cannot happen.  This is the geometric idea behind the computation below.

Note that \eqref{eq:condition} implies that, for the old system of coordinates, $l_i >m_i$.  Therefore the intersection $\PW_i\cap \QW$, defined by $Y_i(u,b)=Y(u,v)$ and 
$z_i(u,b)=z(u,v)$,  is given by the following system of equations   
\begin{align}\label{eq:system}
& (b^2 \varphi_i (u,b) -bb_0 \psi_i (u,b)) - v = {O}(u), \\
\notag
& b \psi_i (u,b) - v \tilde \psi_i (u,v)= {O}(u). 
\end{align}
There are two cases: 
\begin{enumerate}
\item [(i)]
\textcolor{black}{Suppose the jacobian determinant of the LHS of \eqref{eq:system}, with respect to variables $b,v$ is nonzero at $u=b=v=0$.
Then,} by the Implicit Function Theorem there is a solution $(b,v) = (b(u),v(u)) $ of 
\eqref {eq:system}, such that $b(u) \to 0$ and $v(u)\to 0$ as $u\to 0$.  
Then the intersection $\PW_i\cap \QW$ \textcolor{black}{is the curve parameterized by $u$:} $(u^n, Y_i(u,b(u)), z_i(u,b(u))) = (u^n,Y(u,v(u)),z(u,v(u)))$.
Therefore, by Proposition \ref{prop:deriv-bounded},   
 this case cannot happen.
\item [(ii)]
Suppose that the jacobian determinant of the LHS of \eqref{eq:system} vanishes at $u=b=v=0$.  Then the partial derivatives 
$$
\frac \partial {\partial b} \, u^{-m_i}(Y_i(u,b), z_i(u,b)), \quad 
\frac \partial {\partial v} \, u^{-m_i}(Y(u,v), z(u,v)),
$$
that are both non-zero at $u=b=v=0$, are proportional. This means that the limits of tangent spaces to $\mathcal X$ along $C_i$, i. e. at $(u^n, y_i(u,0), z_i(u,0))$ as $u\to 0$,  and at $\gamma (u)$ as $u\to 0$, coincide.  
This limit  is transverse to $H=\{x=0\}$ since 
$H$ is not a limit of tangent spaces by the Transversality Assumptions.  Hence the tangent spaces to $\mathcal X$ at $\gamma(u)$, for small $u$, 
contain vectors of the form $(0,b,1)$ with $b\to 0$ as $u\to 0$.  
This shows that $\gamma \in \PW$ (but not necessarily $\gamma \in \PW_i$). 
\end{enumerate} 
The proof of lemma is now complete.  
\end{proof}

\begin{proof}[Proof of Proposition \ref{prop:distancearctoCj}]
The proof is the same in the parameterized and the non-parameterized case.  
We may suppose again that $\gamma(s) =(s^n, y(s), z(s))$ with 
$y(s) = O(s^n), z(s)=O(s^n)$. 

If $\dist (\gamma(s) , S )=  \dist (\gamma(s) , C_i)$ then the conclusion for 
 $j=i$ follows directly from Lemma \ref{lem:distancearctoS}.  
Then consider $j\ne i$.  If the conclusion is not satisfied then 
$$
s^ {m_i} \lesssim \dist (C_i, \gamma (s)) \le \dist (C_j, \gamma (s)) \ll s^{m_j}.
$$
In particular, $m_i> m_j$, and therefore by Remark \ref{rmk:rem-mi-mj-kij}, 
$k_{ij}\le m_j<m_i$.  But this is impossible since then 
$$
s^ {m_j} \lesssim s^{k_{ij}} \simeq \dist (p_i(s),p_j(s)) \lesssim
\dist (C_j, \gamma (s)) + \dist (C_i, \gamma (s)) \ll s^{m_j},
$$
where $p_i, p_j$ denote parameterizations of $C_i$ and $C_j$ respectively.  
  This ends the proof in this case.  

If $\dist (\gamma(s) , S )=  \dist (\gamma(s) , \Sigma_k)$ 
then the conclusion follows by the second part of Lemma \ref{lem:distance_to_Sigma}.  
\end{proof}


\subsection{Extension of Lipschitz vector fields from a polar wedge to a quasi-wing}

Let the quasi-wing $\QW$ be given by 
$$\QW: p(u,v,t)= (u^n,y(u,t)+vu^l, z(u,v,t),t), \quad y(u,v,t):=y(u,t)+vu^l ,$$
containing an arc $p(u,t) = p(u,0,t)$.  

Fix a polar wedge $\PW_i$ (or $\Sigma_k$) closest to $\QW$ and parameterized by 
$$\PW_i : p_i(u,b,t)= (u^n,y_i(u,b,t),z_i(u,b,t),t).$$ 
Recall after Definition \ref{def:qwings} that $m_i\geq l=l_i$ and then by Corollary \ref{cor:orders-l_i}
\begin{align}\label{eq:recall}
 (y_i(u,b,t)-y(u,v,t))\sim u^l, \,\, \text{and}\,\, \, z_i(u,b,t)-z(u,v,t)=O(u^l).
 \end{align} 

Our goal is to extend any Lipschitz  stratified vector field 
 on $\PW_i$ onto $\QW$. Recall, after Proposition \ref{prop:LVFcriterion1}, that if $p_{i*}( \alpha \frac {\partial}{\partial t} + \beta \frac {\partial}{\partial x} + 
\delta \frac {\partial}{\partial b})$ is Lipschitz stratified then $\alpha$, 
$\beta$, and $\delta$ satisfy the conditions 1)-3) of Proposition \ref{prop:LVFcriterion1}.  In what follows we use only a weaker version of condition 3) that is,  see Remark \ref{rem:delta-smaller-than-b} for explanation,  

\medskip
\noindent
\emph{3')  $|\delta |$ is bounded and 
$ \delta x^{m/n}$  satisfies \eqref{eq:wedgelipschitz}.} 

\medskip We note that by \eqref{eq:recall} and $m_i \ge l$, a vector field 
 is Lipschitz on  $\PW_i \cup \QW$ if and only if it is Lipschitz on each 
 $\PW_i$ and $\QW$ and it is Lipschitz on the union of the images of two arcs 
 $p(u,t)$ and $p_i(u,t)$.  

\begin{prop}[Extension of Lipschitz vector fields from $\PW_i$ onto $\QW$.]\label{prop:ext:pw-->qw}
Let $p_{i *}( \alpha (u,b,t)  \frac {\partial}{\partial t} 
 + \beta (u,b,t)  \frac {\partial}{\partial x} + 
\delta (u,b,t)  \frac {\partial}{\partial b})$ be a stratified Lipschitz vector field on $\PW_i$. Set 
$\alpha_0 (u,v,t) := \alpha(u,0,t)$ and $\beta_0 (u,v,t) := \beta(u,0,t)$. 
Then $p_{*}( \alpha_0 \frac {\partial}{\partial t} + \beta_0 \frac {\partial}{\partial x})$ is a stratified Lipschitz vector field on $\QW$ and both fields define a stratified Lipschitz vector field on $\PW_i\cup\QW.$
\end{prop}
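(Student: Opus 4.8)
The plan is to establish the two assertions separately. The first --- that $p_*(\alpha_0\frac{\partial}{\partial t}+\beta_0\frac{\partial}{\partial x})$ is stratified Lipschitz on $\QW$ --- I would deduce from the criterion of Proposition~\ref{prop:LVFcriterion1-qw}. For the second I would use the reduction recorded just before the statement: a field is Lipschitz on $\PW_i\cup\QW$ as soon as it is Lipschitz on $\PW_i$, on $\QW$, and on the union of the images of the arcs $p_i(u,t)=p_i(u,0,t)$ and $p(u,t)=p(u,0,t)$, i.e. of the spine $C_i$ of $\PW_i$ and the central arc $C$ of $\QW$.

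For the first assertion: since $\alpha_0,\beta_0$ do not depend on $v$ and $\beta_0(0,t)=\beta(0,0,t)=0$ (the given field being tangent to $T$), the field $p_*(\alpha_0\frac{\partial}{\partial t}+\beta_0\frac{\partial}{\partial x})$ is well defined on $\QW$ and tangent to $T$, which is the only stratification requirement since $\QW\cap S=T$. Its $\frac{\partial}{\partial v}$-coefficient is $0$, so condition 3) of Proposition~\ref{prop:LVFcriterion1-qw} is vacuous, while conditions 1) and 2) follow from the facts that $\alpha$ and $\beta$ satisfy \eqref{eq:wedgelipschitz} and $|\beta|\lesssim|x|$ (Proposition~\ref{prop:LVFcriterion1} applied to the given Lipschitz field): setting $b_1=b_2=0$ in \eqref{eq:wedgelipschitz} yields $|h(u_1,0,t_1)-h(u_2,0,t_2)|\lesssim|t_1-t_2|+|u_1^n-u_2^n|$, which is stronger than \eqref{eq:qw-lipschitz}. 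For the second assertion, the glued field $V$ is well defined because $\PW_i\cap\QW\subseteq T$ by Proposition~\ref{prop:deriv-bounded} (as in the proof of Lemma~\ref{lem:distancearctoS}), both pieces being equal to $\alpha_0(0,t)\frac{\partial}{\partial t}$ on $T$ (the $x$-, $y$-, $z$-coefficients vanishing there because $\beta_0(0,t)=0$ and $y_i,y,z_i,z=O(u^n)$), and it is stratified because each piece is; by the reduction and the first assertion it suffices to bound $V$ on $C_i\cup C$. On $C_i$ the field $V$ is the restriction of a Lipschitz field, hence Lipschitz; I would note that at $b=0$ the $\frac{\partial}{\partial b}$-part of the $\PW_i$-field contributes nothing to the $y$-coordinate (since $\partial y_i/\partial b=0$ at $b=0$ by \eqref{eq:generalform-t}) and contributes $\delta(x,0,t)u^{m_i}\psi_i=O(|u|^{m_i})=O(|u|^l)$ to the $z$-coordinate, using only $|\delta|$ bounded (the weakened condition 3')) and $m_i\ge l$ --- this term vanishes identically when $\PW_i$ contains $C_i$, where $\delta(x,0,t)=0$. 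On $C$, $V$ is Lipschitz by the first assertion.

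It remains to compare $V$ on $C_i$ with $V$ on $C$. For $q=p_i(u_1,0,t_1)\in C_i$ and $q'=p(u_2,0,t_2)\in C$ I would write
\[
V(q)-V(q') = (V(p_i(u_1,0,t_1))-V(p_i(u_2,0,t_2))) + (V(p_i(u_2,0,t_2))-V(p(u_2,0,t_2))).
\]
The first bracket is a difference of the Lipschitz field $V|_{C_i}$, so it is $\lesssim\|p_i(u_1,0,t_1)-p_i(u_2,0,t_2)\|\sim\max\{|t_1-t_2|,|x_1-x_2|\}\lesssim\|q-q'\|$ by \eqref{eq:i=j}. The second bracket is the fibrewise discrepancy over $(x_2,t_2)$: its $t$- and $x$-components vanish, and its $y$- and $z$-components are $\beta_0\frac{\partial(y_i-y)}{\partial x}+\alpha_0\frac{\partial(y_i-y)}{\partial t}$ and $\beta_0\frac{\partial(z_i-z)}{\partial x}+\alpha_0\frac{\partial(z_i-z)}{\partial t}+\delta(x,0,t)u^{m_i}\psi_i$ evaluated at $(u_2,t_2)$; since $y_i-y\sim u^l$ and $z_i-z=O(u^l)$ by \eqref{eq:recall} are analytic, their $x\frac{\partial}{\partial x}$- and $\frac{\partial}{\partial t}$-derivatives are still $O(u^l)$, so with $|\beta_0|\lesssim|x|$, $|\alpha_0|\lesssim1$, $|\delta|$ bounded and $m_i\ge l$ this bracket has norm $O(|u_2|^l)$. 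The main obstacle is then to verify $\|q-q'\|\gtrsim|u_2|^l$ so that this discrepancy is absorbed: if $|x_1-x_2|+|t_1-t_2|\gtrsim|u_2|^l$ it is clear, and otherwise $|u_1|\sim|u_2|$ and the $y$-coordinate of $q-q'$ equals $(y_i(u_1,0,t_1)-y_i(u_2,0,t_2))+(y_i(u_2,0,t_2)-y(u_2,0,t_2))$, whose first summand is $O(|x_1-x_2|+|t_1-t_2|)=o(|u_2|^l)$ and whose second summand is $\sim|u_2|^l$ by \eqref{eq:recall}, so $\|q-q'\|\gtrsim|u_2|^l$. This comparison is exactly where the hypotheses $l=l_i\le m_i$ and the contact estimates \eqref{eq:recall} enter, and it plays here the role that Corollary~\ref{cor:comparison} plays for a pair of polar wedges; together with Lipschitzness on $\PW_i$, on $\QW$, and the gluing discussion, the two bracket bounds finish the proof.
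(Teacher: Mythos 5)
Your proposal is correct and follows essentially the same route as the paper: the first assertion via Proposition \ref{prop:LVFcriterion1-qw}, and the second via the reduction to the two central arcs, where your fibrewise discrepancy bounds (using \eqref{eq:recall}, $m_i\ge l$, and only the weakened condition 3') on $\delta$) are exactly the paper's items (1)--(5). Your explicit splitting of $V(q)-V(q')$ and the lower bound $\|q-q'\|\gtrsim|u_2|^{l}$ merely spell out the comparison the paper leaves implicit, so there is nothing to correct.
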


\begin{proof}

 By Proposition \ref{prop:LVFcriterion1-qw},  $p_{*}( \alpha_0 \frac {\partial}{\partial t} + \beta_0 \frac {\partial}{\partial x})$  is Lipschitz on $\QW.$ To show that both vector fields define a Lipschitz vector field  on $\PW_i\cup\QW$ it suffices to show that, taking $b=0$ and $ v=0$ we have:
\begin{enumerate}
\item $\alpha (u,0,t)\frac {\partial}{\partial t}(y(u,t)-y_i(u,t))=O(u^{l_i});$
\item $\alpha (u,0,t)\frac {\partial}{\partial t}(z(u,t)-z_i(u,t))=O(u^{l_i});$
\item $\beta  (u,0,t)\frac {\partial}{\partial u}(y(u,t)-y_i(u,t))=O(u^{l_i});$
\item $\beta  (u,0,t)\frac {\partial}{\partial u}(z(u,t)-z_i(u,t))=O(u^{l_i});$
\item $\delta (u,0,t) u^{m_i}=O(u^{l_i}).$
\end{enumerate}
The  items (1)-(4) follow from \eqref{eq:recall} and (5) follows from $m_i\geq l_i.$
\end{proof}

\begin{rmk}\label{rem:delta-smaller-than-b}
Since in the above proof we only used the condition 3') we can  apply Proposition \ref{prop:ext:pw-->qw} to the quasi-wings constructed in Corollary \ref{cor:existenceq-wing}, that is  after a change of coordinates to $x, Y_{b_0},z,t$, 
where $Y_{b_0}:=y-b_0z$, that corresponds to a shift in $b$.    
\end{rmk}


\section{Proof of Theorem \ref{thm:maintheorem}.  Part II}

We complete the proof of Theorem \ref{thm:maintheorem}.  Let $\gamma (s), \gamma'(s)$, $s\in [0,\varepsilon)$, be 
 two real analytic arcs in $\mathcal X$.
    We want to show that any stratified Lipschitz vector field $v$ defined on the union of $S$ and $\gamma$ extends to 
 $\gamma'$ as stated in the valuative criterion, see the next section.  We consider two cases.  

\medskip
\noindent
\textbf{Case 1.} 
$\dist (\gamma (s), \gamma '(s)) \gtrsim \dist (\gamma' (s), S)$.  \\
\textcolor{black}{Then it is enough to extend $v|_S$ to a Lipschitz vector field on $S\cup \gamma'$, since then such an extension defines a Lipschitz vector field on $S\cup \gamma(s) \cup \gamma'(s)$  for every $s$ sufficiently small, with the Lipschitz constant independent of $s$}. \\

\noindent
\textbf{Case 2.} 
$\dist (\gamma (s), \gamma '(s)) \ll \dist (\gamma' (s), S)$.  
\textcolor{black}{Then it suffices to extend $v$ from $\gamma$ to a Lipschitz vector field on $\gamma \cup \gamma'$.} \\
Note that we may suppose that on  both arcs $\gamma$, $\gamma'$ we have that $y= O(x), z= O (x)$, 
 that is, they are in the form \eqref{eq:gammaform}. Indeed, by Transversality Assumption the variable $z$  restricted to an arc in  $\mathcal X$ cannot dominate $x$ and $y$, that is $x= o(z), y= o (z)$ is not possible.  Thus, if $y= O(x), z= O (x)$ is not satisfied, then  $x= o(y), z= O (y)$.  In this case  we change the local coordinate system to $(X_a,y,z,t) = (x-ay,y,z,t)$, for $a\ne 0$ and small.  
 This is a change of coordinates in the target of the projection $(x,y,z,t) \to (x,y,t)$ and  affects neither the discriminant locus  nor Zariski's  Equisingularity.

To make the proof more precise we will use the constant $\varepsilon$ of Definition \ref{def:polarwedge} and denote thus defined the union of polar wedges and the singular set by 
$\PW_\varepsilon$.  If both $\gamma (s), \gamma'(s)$ belong to $\PW_\varepsilon$ then the claim follows from the first part of the proof, Section \ref{sec:proof-partI}. 

\textcolor{black}{
In  \textbf{Case 1}, given a stratified Lipschitz vector field $v$ on $S$ we extend it on $\gamma'$.  By Proposition \ref {prop:distancearctoCj} we may suppose  $\dist (\gamma (s), C_j) \gtrsim   s^ {m_j}$ for every $j$, and therefore, for $b$ small, say 
$b\le \varepsilon$, $\dist (\gamma (s), C_j)  \sim  \dist (\gamma (s), C_{j,b})$, where $C_{j,b}$ denotes the polar set in $\PW_j$ after  the  change of coordinates to $x, Y_{b_0}= y-b_0 z,z,t$. Then we proceed as follows.  First we extend $v$ to a Lipschitz vector field on  $\PW_{\varepsilon/2 }$ and use Corollary \ref{cor:existenceq-wing} to embedd $\gamma '$ in a quasi-wing in this new system of coordinates for a $b_0\le \varepsilon /2$.   Thus there exists a quasi-wing $\QW$ containing $\gamma ' $ and, moreover, $\dist (\gamma'(s) , S) = \dist (\pi_{b_0} 
(\gamma'(s)) , \Delta_{b_0}) \sim s^ l$, where $l= \max \{ \max l_i, \max r_k\}$ and $\Delta_{b_0}$ denotes the discriminant $\pi_{b_0}$. 
 Then there is a Lipschitz extension of $v$ to $\QW$ by Proposition \ref{prop:ext:pw-->qw}.  }

Similarly, in  \textbf{Case 2} we may suppose  
$\dist (\gamma (s), C_j) \sim \dist (\gamma ' (s), C_j) \gtrsim   s^ {m_j}$ for every $j$, 
otherwise, by Proposition \ref {prop:distancearctoCj}, both $\gamma (s), \gamma'(s)$ belong to $\PW_\varepsilon$.  Then, conveniently choosing $b$, we may suppose that 
$\dist (\pi_b(\gamma(s)),\pi_b(\gamma'(s))) \sim \dist (\gamma(s),\gamma'(s)) \ll s^l$.  Let $\QW$ be a quasi-wing containing $\gamma$. It always exists by  Corollary \ref{cor:existenceq-wing}, and $\gamma'$ is contained either in $\QW$ or in another quasi-wing 
$\QW'$ such that $\QW$ and $\QW'$ are nicely-situated.  Then we apply 
Corollary \ref{cor:extension-QW2} to extend a Lipschitz vector field $v $ from $\gamma$ to $\gamma'$.

\section{Valuative criterion on extension of Lipschitz vector fields}
\label{sec:LVEC}
The purpose of this section is to give a precise statement 
of a valuative criterion on extension of Lipschitz vector fields.  In this 
criterion we formalise our strategy of checking the conditions (i) and (ii) of Proposition \ref{prop:LipStr-charcterization} along real analytic arcs.  

Let us consider the following more general set-up.  Let $X$ be a locally closed subanalytic subset $\R^n$ with a filtration $\SX
=(X^j)_{ j=l,\dots , d}$ by closed subanalytic subsets 
\begin{align}\label{eq:filtration}  
X=X^d \supset X^{d-1} \supset \cdots \supset X^l \ne\emptyset ,
\end{align}
such that for every $j=l,\dots , d$, $ \mathring{X}^{j}=X^{j}\setminus X^{j-1}$ is either empty or a real analytic submanifold of pure dimension $j$. 
Here we mean $X^{l-1} =\emptyset$. Note that $\SX$ induces a stratification of $X$ by taking the connected components of every $\mathring{X}^{j}$ as strata.    By a stratified Lipschitz vector field (SLVF for short) 
we mean a Lipschitz vector field defined on a subset of $X$ and tangent to the strata.

\begin{defi}\label{def:LVE}
[Local Valuative Extension of Lipschitz Vector Fields Condition]  
We say that $\SX$ satisfies \emph{LVE condition at $p\in X$} if for every 
$j=l,\dots , d$ and every pair of real analytic arc germs $\gamma, \gamma' :[0,\epsilon)\to X^j$ at $p$,
 i.e. $\gamma(0)= \gamma'(0)=p$, every SLVF on $X^{j-1}\cup \gamma( [0,\epsilon))$ can be extended to a vector field on $X^{j-1}\cup \gamma( [0,\epsilon))\cup \gamma'( [0, \epsilon))$ satisfying the following condition: \\
 \emph{there is a constant $L$ such that for every $s$ sufficiently small this extension is an SLVF vector field, with Lipschitz constant $L$, 
  on $X^{j-1}\cup \gamma(s)\cup \gamma'(s)$}. 
 \end{defi}

\begin{rmk}
The following, a priori stronger condition, implies the LVE: 
 for every SLVF on $X^{j-1}\cup \gamma( [0,\epsilon))$ there is $\varepsilon '>0$  such that this vector field admits an extension that is  SLVF on  $X^{j-1}\cup \gamma( [0,\epsilon'))\cup \gamma'( [0, \epsilon'))$.
 \end{rmk}

We say that \emph{$\SX$ induces a Lipschitz stratification at $p\in X$} if there is an open neighbourhood $U$ of $p$ such that $\SX$ restricted to $U$ induces a Lipschitz stratification of $X\cap U$.  

\begin{prop}[LVE Criterion] \label{prop:LVE}
$\SX$ induces a Lipschitz stratification at $p\in X$ if and only if it satisfies the LVE condition at $p$. 
\end{prop}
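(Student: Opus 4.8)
The plan is to prove the two implications separately, using Proposition \ref{prop:LipStr-charcterization} --- and in particular its pointwise characterization (ii) --- as the bridge between the ``$W=X^{j-1}\cup\{q\}$, add one point $q'$'' formulation and the valuative ``arc $\gamma$, add an arc $\gamma'$'' one. The direction from Lipschitz stratification to the LVE condition is the easy one and uses no curve selection. Fix $j$, real analytic arcs $\gamma,\gamma'$ at $p$ in $X^j$, and an SLVF $v$ on $X^{j-1}\cup\gamma([0,\epsilon))$, and set $L_0:=\mathrm{Lip}(v)$, $K_0:=\sup_{X^l}\|v\|$. For each $s$ the set $W_s:=X^{j-1}\cup\{\gamma(s)\}$ is exactly of the type occurring in Proposition \ref{prop:LipStr-charcterization}(ii), and $v|_{W_s}$ is an SLVF on it with Lipschitz constant $\le L_0$ and bounded on $X^l$ by $K_0$; hence (ii), with its universal constant $C$, extends $v|_{W_s}$ by the single point $\gamma'(s)$ to an SLVF on $W_s\cup\{\gamma'(s)\}$ with Lipschitz constant $\le C(L_0+K_0)$. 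Letting $\tilde v$ agree with $v$ on $X^{j-1}\cup\gamma([0,\epsilon))$ and take, at each $\gamma'(s)$, the value assigned by this extension (keeping the original value wherever $\gamma'$ already meets $X^{j-1}\cup\gamma([0,\epsilon))$), the restriction of $\tilde v$ to $X^{j-1}\cup\gamma(s)\cup\gamma'(s)=W_s\cup\{\gamma'(s)\}$ is precisely that extension, so it is an SLVF with Lipschitz constant $\le C(L_0+K_0)$, independent of $s$. This is the LVE condition at $p$.

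For the converse I argue by contraposition; this is in essence a reformulation, for the filtration $\SX$, of the principle that Mostowski's conditions, when they fail, fail along a pair of real analytic arcs --- see \cite{mostowski85} Lemma 6.2 and, in the cleanest form, the valuative conditions of \cite{halupczok-yin2108}. Suppose $\SX$ does not induce a Lipschitz stratification at $p$; then Proposition \ref{prop:LipStr-charcterization}(ii) fails, and since $l\le j\le d$ ranges over a finite set there are a fixed $j$, points $q_m,q'_m\in X^j$ tending to $p$, and SLVFs $v_m$ on $W_m:=X^{j-1}\cup\{q_m\}$ normalized by $\mathrm{Lip}(v_m)+\sup_{X^l}\|v_m\|=1$, such that every extension of $v_m$ to $W_m\cup\{q'_m\}$ has Lipschitz constant $\ge C_m$ with $C_m\to\infty$. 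The clean way to conclude is to pass to a sufficiently saturated real closed valued field, where this sequential failure transfers to the failure of the one-point extension property at a \emph{single} point $q$ infinitesimally close to $p$, and where real analytic arcs through $p$ correspond exactly to such points; the failure at $q$ then negates the valuative form of LVE directly. Equivalently, working over $\R$, one extracts from $(v_m|_{X^{j-1}})$ a locally uniform limit $v_\infty$ (an Arzel\`a--Ascoli argument, slightly delicate because the Lipschitz constant on $X^{j-1}\cup\{q_m\}$ depends on $\mathrm{dist}(q_m,X^{j-1})$), encodes the remaining finite-dimensional data $(q_m,v_m(q_m),q'_m)$ --- constrained by the inequalities expressing Lipschitz compatibility with $v_\infty$ --- in a definable set on which the definable minimal-extension-constant function is unbounded near $p$, and applies the curve selection lemma to obtain real analytic arcs $\gamma(s),\gamma'(s)$ at $p$ and a real analytic family $w(s)$ along $\gamma$ such that the field equal to $v_\infty$ on $X^{j-1}$ and to $w(s)$ at $\gamma(s)$ is an SLVF on $X^{j-1}\cup\gamma([0,\epsilon))$, while extending it by the point $\gamma'(s_m)$ forces Lipschitz constant $\to\infty$ along a sequence $s_m\to0$. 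Either way one obtains an arc-datum violating LVE at $p$.

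The main obstacle is this converse, and the heart of it --- already the subtle point in \cite{mostowski85} and \cite{halupczok-yin2108} --- is to produce, from the discrete blowing-up data, a vector field $w$ that is Lipschitz on \emph{all} of $X^{j-1}\cup\gamma([0,\epsilon))$: in particular the Lipschitz bound between two distinct points of $\gamma$ does not follow from the per-slice estimates on $X^{j-1}\cup\gamma(s)$, so the compatibility inequalities, together with the appropriate divisibility of $w(s)$ by powers of the arc parameter matching $\mathrm{dist}(\gamma(s),X^{j-1})$, must be written into the definable set over which the curve is selected (or, in valuative language, be part of what one verifies for the field point $q$). Granting this, the per-slice blow-up of the extension constant immediately contradicts the clause in the LVE condition demanding a single Lipschitz constant $L$ valid for all small $s$, which completes the proof.
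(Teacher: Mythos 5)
Your forward implication is fine and is exactly the paper's (one-line) argument: apply Proposition \ref{prop:LipStr-charcterization}(ii) slice by slice at $W_s=X^{j-1}\cup\{\gamma(s)\}$ and note that the constant $C$ is uniform in $s$. The problem is the converse, where your argument has a genuine gap that you yourself flag with ``Granting this''. The contraposition gives you, for each $m$, a vector field $v_m$ on $X^{j-1}\cup\{q_m\}$ whose one-point extensions blow up; to contradict LVE you must manufacture a \emph{single} SLVF on $X^{j-1}\cup\gamma([0,\epsilon))$ (Lipschitz also between distinct points of the arc) whose extensions to $\gamma'(s)$ blow up, and neither of your two routes produces it. The curve-selection route fails as stated because the set you propose to select a curve from is not subanalytic: it is defined by quantifying over the infinite-dimensional family of stratified Lipschitz fields, or, in your reformulation, by ``Lipschitz compatibility with $v_\infty$'', and $v_\infty$ (an Arzel\`a--Ascoli limit of arbitrary Lipschitz fields) is not a definable object. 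Moreover, locally uniform convergence $v_m\to v_\infty$ on $X^{j-1}$ is far too weak to preserve the obstruction, since the relevant estimates involve differences divided by $\dist(q_m,X^{j-1})\to 0$; this is exactly the ``slightly delicate'' point you mention, and it is not resolved. The appeal to a saturated real closed valued field is likewise only a pointer: the LVE condition is not a first-order statement about points, and making that transfer precise is essentially the content of \cite{halupczok-yin2108}, not something one gets for free.

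The paper avoids these problems by never applying curve selection to vector-field data. It proves the converse by induction on the skeleta $X^j$ and routes the failure through Mostowski's conditions \eqref{eqn:M1}--\eqref{eqn:M2}: their failure is a subanalytic condition on chains of \emph{points} (distances and orthogonal projections onto tangent spaces), so curve selection (or \cite{mostowski85} Lemma 6.2, or the valuative criteria of \cite{halupczok-yin2108}) yields real analytic arcs $q_1(s),\dots,q_r(s)$ forming $c$-chains along which, say, \eqref{eqn:M1} fails with $C(s)\to\infty$ and with $k$ minimal. One then sets $\gamma'=q_1$, $\gamma=q_2$, and LVE is applied not to a limit of the unknown fields $v_m$ but to explicitly constructed fields: for $v$ a unit vector in $V_0=\lim_s T_{q_k(s)}\mathring X^{j_k}$, the field $x\mapsto P_x\bigl(\dist(x,X^{j_k-1})\,v\bigr)$ is Lipschitz on $X^{j_k}$ by the extension construction of \cite{Lipschitz-Fourier}, is extended to an SLVF $w$ on $X^{j-1}$ by the \emph{inductive hypothesis} that $X^{j-1}$ is already Lipschitz, and then to $\gamma'$ by LVE; the telescoping chain estimate for $|P_{q_1}^\perp P_{q_2}\cdots P_{q_k}w(q_k(s))|$ then shows \eqref{eqn:M1} holds along the arcs after all, a contradiction (and similarly for \eqref{eqn:M2}). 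Both the detour through (M1)/(M2) (to get definability of the failure) and the induction on skeleta (to extend the special field to $X^{j-1}$) are absent from your proposal, and without them the step you defer under ``Granting this'' is precisely the unproved heart of the claim.
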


\begin{proof}
We first recall the notions of a chain and Mostowski's Conditions.  We follow the approach of \cite{Lipschitz-Fourier} simplifying a little bit  the notation and exposition.  For slightly different but equivalent conditions see \cite{mostowski85,Lipschitz-review}.  One can simplify 
the proof below by using directly the valuative criteria of \cite{halupczok-yin2108} but we prefer to give a self-contained proof based 
on elementary computations given in the proofs of Proposition 1.2 and 1.5 of \cite{Lipschitz-Fourier}.

Fix $c>1$. A \emph {chain (more exactly, a $c$-chain)} 
for a point $q\in \mathring{X}^{j}$ is a strictly decreasing sequence of indices
$j= j_1, j_2, \ldots , j_r=l$ and a sequence of points $q_{m} \in \mathring{X}^{j_m}$ 
such that $q_{1}=q$ and $j_m$  is the greatest integer for which 
\begin{align*}
& \dist (q,X^{{k}}) \ge 2c^2   \dist (q,X^{j_{m}} ) \text{ for all }   k<j_m \\
& |q-q_{m}| \le c \dist (q,X^{{j_m}}).
\end{align*}
The condition $c>1$ is imposed only to ensure that every point $q\in X$ admits a chain.   A chain satisfies the following properties: 
\begin{enumerate}
	\item [(1)]
	$ \dist (q,X^{j_{m+1}}) \le 2^n c^{2n}  \dist (q,X^{j_{m}-1}) $,
\item [(2)]
$|q_{m}-q_{{m+1}}| \le 2^{n+1} c^{2(n+1)}  \dist (q,X^{j_{m}-1}) $,
\item [(3)]
$2\dist (q_m,X^{j_{m}-1})\ge \dist (q,X^{j_{m}-1})$.  
\end{enumerate}

Let $P_q:\R^n\to T_q\mathring X^j$ denote the orthogonal projection onto the tangent space  and $P_q^{\perp}=I-P_q$ the orthogonal projection onto the normal space $T_q^\perp\mathring X^j.$
We say that $\SX$ satisfies Mostowski's Conditions if there is a constant $C>0 $ such that for all chains $\{q_m\}_{m=1, \ldots , r}$ and all $2\le k\le r$:  
\begin{align}\label{eqn:M1}\tag{M1}
| P_{q_1}^{\perp} P_{q_{2}} \cdots P_{q_{k}} | \le C|q-q_2| / \dist (q,X^{j_{k}-1}).
\end{align}
If, further, $q'\in \mathring{X}^{j}$ and $|q-q'| \le (\frac 1 {2c} ) \dist (q, X^{j-1})$ then 
\begin{align}\label{eqn:M2}\tag{M2}
| (P_q - P_{q'}) P_{q_{2}} \cdots P_{q_{k}} |  \le C|q-q'| / \dist (q,X^{j_{k}-1}), 
\end{align}
in particular, 
\begin{align}\label{eqn:M3}\tag{M3}
| P_q - P_{q'}| \le C|q-q'| / \dist (q,X^{j_{1}-1}), 
\end{align}
where $\dist ( \cdot, \emptyset ) \equiv 1$.   

By Proposition 1.5 of \cite{Lipschitz-Fourier},  $\SX$ induces a Lipschitz stratification if and only if any of two equivalent conditions (i) and (ii) of Proposition \ref{prop:LipStr-charcterization} holds.   In particular the definition of Mostowski's stratification is independent of the choice of the constant $c>1$ used to define the chains.  

Clearly by Proposition \ref{prop:LipStr-charcterization} a Lipschitz stratification satisfies LVE condition at any point of $X$.  

Suppose that $\SX$ satisfies LVE condition at  $p$.  We show by induction on $j$ that $\SX$ induces a Lipschitz statification of $X^j$ at $p$, the case $j=l$ being obvious because $X^l$ is nonsingular.  Thus we suppose it for $X^{j-1}$ and prove for $X^j$.  Suppose the latter does not hold.  Then by a fairly straightforward application of the curve selection lemma there are real analytic arcs $q_m (s) : [0, \varepsilon) \to X^{j_m}$, $m=1, \ldots, r$, $j_1=j$, at $p$, that are c-chains of $q(s) = q_1(s)$ for $s\ne 0$, and possibly another arc $q' (s) : [0, \varepsilon) \to X^j$ satisfying $|q(s)-q'(s)| \le (\frac 1 {2c} ) \dist (q(s), X^{j-1})$ for $s\ne 0$, for which one of the conditions \eqref{eqn:M1},\eqref{eqn:M2} fails, that is it holds with the constant $C(s)\to \infty$ as $s\to 0$. 
Indeed, it follows from Lemma 6.2 of \cite{mostowski85}, that is stated in the complex analytic set-up, or from the valuative criteria of \cite{halupczok-yin2108}, where the authors even managed to get rid of the constant $c$ defining the chains.  

We will show that the existence of such arcs contradicts LVE condition. We may assume that the index $k$, given by the length of the expression on the left-hand side of \eqref{eqn:M1},\eqref{eqn:M2}, for which one of these conditions fails is minimal.  Suppose that this is the condition \eqref{eqn:M1}.  
Let us then put $\gamma' (s):= q(s)$ and $\gamma (s) := q_2(s)$. 
 Then adapting the proofs of Propositions 1.2 and 1.5 of \cite{Lipschitz-Fourier} and using LVE condition we show that there is a constant $C>0$, independent of $s$, such that \eqref{eqn:M1} holds along the family of arcs $q_m , m=1, \ldots, k$, that gives a contradiction. 

   Let $V _0= lim_{s\to 0} T_{q_k(s)} \mathring{X}^{j_k}$.  Then $\dim V_0 = j_k$.   
 Let $v\in V_0$, $|v|=1$. Then $x\to \dist (x, X^{j_k-1}) v$ is a Lipschitz vector field (on a neighborhood of $p$) with the Lipschitz constant $1$. By the proof of Proposition 1.2 of \cite{Lipschitz-Fourier} (extension of Lipschitz vector fields on a Lipschitz stratification), $x\to P_x (\dist (x, X^{j_k-1}) v)$ defines 
a Lipschitz vector field on ${X}^{j_k}$.  By inductive assumption on $j$, we extend it to an SLVF, denoted by  $w$, on $X^{j-1}$ and then by LVE condition   
to the image of $\gamma'$.  This gives, together with \eqref{eqn:M1} for $m<k$ and the standard inqualities (1-3) satisfied by the chains, 
\begin{align*}
| & P_{q_1(s)}^{\perp} P_{q_{2}(s)} \cdots P_{q_{k}(s)}  w(q_k(s))| =|  P_{q_1(s)}^{\perp} P_{q_{2}(s)} \cdots P_{q_{k-1}(s)}  w(q_k(s))| \\
 & 
\le 
| P_{q_1(s)}^{\perp} P_{q_{2}(s)} \cdots P_{q_{k-1}(s)} w(q_{k-1}(s))| 
+ | P_{q_1(s)}^{\perp} P_{q_{2}(s)} \cdots P_{q_{k-1}(s)} (w(q_{k}(s) -w(q_{k-1}(s))| \\
& \cdots \\
& \le 
\sum _{1\le s<k}| P_{q_1(s)}^{\perp} P_{q_{2}(s)} \cdots P_{q_{s}(s)} (w(q_s(s))- (w(q_{s+1}(s))) | \\ 
& \le C \sum _{1\le s<k} \frac {|q(s)-q_{2}(s)| } {\dist (q, X^{j_s -1})} |q_s(s)- q_{s+1}(s)| \le C' |q(s) - q_2(s)| .
\end{align*}
Note that if $k=2$ the first term of the RHS of the first inequality does not appear, otherwise everything is the same.

Since $w(q_k(s)) = \dist (q_k(s), X^{j_k-1}) P_{q_k(s) } v$ we get, by property (3) of the chains,  
\begin{align*}
| P_{q_1(s)}^{\perp} P_{q_{2}(s)} \cdots P_{q_{k}(s)} v |   \le C' |q(s) - q_2(s)| / \dist (q(s), X^{j_k-1}).
\end{align*}
Applying the above to a finite set of $v$ from an orthonormal basis of $V_0$, and taking into account that $|P_{q_{k}(s)}v -v| \le C |q_{k}(s)| \to 0 $, as $s\to 0$, we show that 
\eqref{eqn:M1} holds along this family of arcs contrary to our assumptions.
A similar argument, based on the second part of the proof of Proposition 1.5 of \cite{Lipschitz-Fourier} applies to the condition \eqref{eqn:M2}.
This ends the proof.
\end{proof}

\begin{rmk}
Proposition \ref{prop:LVE} holds in a more general o-minimal set-up when one assumes every $X^j$ to be definable, every 
$\mathring{X}^{j}$ to be a $C^2$ submanifold, and the arcs to be continuous and definable.  One can also restrict the LVE condition, 
Definition \ref{def:LVE}, to definable vector fields, because the extension of Lipschitz vector fields construction of Proposition 1.2 of \cite{Lipschitz-Fourier} preserves the definability, see Remark 1.4 of \cite{Lipschitz-review}.  
\end{rmk}

\bibliographystyle{siam}
\DIFdelbegin 
\DIFdelend \DIFaddbegin \bibliography{ZELip}

\newcommand{\noop}[1]{}
\begin{thebibliography}{10}

\bibitem{BNP2014}
{\sc L.~Birbrair, W.~D. Neumann, and A.~Pichon}, {\em The thick-thin
  decomposition and the bilipschitz classification of normal surface
  singularities}, Acta Math., 212 (2014), pp.~199--256.

\bibitem{brianconhenry80}
{\sc J.~Brian\c{c}on and J.~P.~G. Henry}, {\em \'{E}quisingularit\'e
  g\'en\'erique des familles de surfaces \`a singularit\'e isol\'ee}, Bull.
  Soc. Math. France, 108 (1980), pp.~259--281.

\bibitem{kovacsics-halupczok2021}
{\sc P.~Cubides~Kovacsics and I.~Halupczok}, {\em Arc-wise analytic
  t-stratifications}, arXiv:2106.11289, 2021.

\bibitem{federerbook}
{\sc H.~Federer}, {\em Geometric measure theory}, Die Grundlehren der
  mathematischen Wissenschaften, Band 153, Springer-Verlag New York Inc., New
  York, 1969.

\bibitem{BFS2018}
{\sc J.~Fern\'{a}ndez~de Bobadilla, A.~Fernandes, and J.~E. Sampaio}, {\em
  Multiplicity and degree as bi-{L}ipschitz invariants for complex sets}, J.
  Topol., 11 (2018), pp.~958--966.

\bibitem{halupczok-yin2108}
{\sc I.~Halupczok and Y.~Yin}, {\em Lipschitz stratifications in power-bounded
  {$o$}-minimal fields}, J. Eur. Math. Soc. (JEMS), 20 (2018), pp.~2717--2767.

\bibitem{luengo85}
{\sc I.~Luengo}, {\em An example concerning a question of {Z}ariski}, Bull.
  Soc. Math. France, 113 (1985), pp.~379--386.

\bibitem{mcshane1934}
{\sc E.~J. McShane}, {\em Extension of range of functions}, Bull. Amer. Math.
  Soc., 40 (1934), pp.~837--842.

\bibitem{mostowski85}
{\sc T.~Mostowski}, {\em Lipschitz equisingularity}, Dissertationes Math.
  (Rozprawy Mat.), 243 (1985), p.~46.

\bibitem{mostowski88}
\leavevmode\vrule height 2pt depth -1.6pt width 23pt, {\em Tangent cones and
  {L}ipschitz stratifications}, in Singularities ({W}arsaw, 1985), vol.~20 of
  Banach Center Publ., PWN, Warsaw, 1988, pp.~303--322.

\bibitem{NPpreprint}
{\sc W.~D. Neumann and A.~Pichon}, {\em Lipschitz geometry of complex surfaces:
  analytic invariants and equisingularity}, arXiv:1211.4897, 2012.

\bibitem{NV2016}
{\sc N.~Nguyen and G.~Valette}, {\em Lipschitz stratifications in o-minimal
  structures}, Ann. Sci. \'{E}c. Norm. Sup\'{e}r. (4), 49 (2016), pp.~399--421.

\bibitem{Lipschitz-Fourier}
{\sc A.~Parusi\'nski}, {\em Lipschitz properties of semi-analytic sets}, Ann.
  Inst. Fourier (Grenoble), 38 (1988), pp.~189--213.

\bibitem{Lipschitz-review}
\leavevmode\vrule height 2pt depth -1.6pt width 23pt, {\em Lipschitz
  stratification}, in Global analysis in modern mathematics ({O}rono, {ME},
  1991; {W}altham, {MA}, 1992), Publish or Perish, Houston, TX, 1993,
  pp.~73--89.

\bibitem{P2005}
{\sc A.~Parusi\'{n}ski}, {\em Integrability of some functions on semi-analytic
  sets}, in Singularit\'{e}s {F}ranco-{J}aponaises, vol.~10 of S\'{e}min.
  Congr., Soc. Math. France, Paris, 2005, pp.~243--254.

\bibitem{P2020}
{\sc A.~Parusi\'nski}, {\em Algebro-geometric equisingularity of {Z}ariski}, in
  Handbook of Geometry and Topology of Singularities II, J.~L. Cisneros-Molina,
  D.~T. L\^e, and J.~Seade, eds., Springer-Verlag, 2021, pp.~177--222.

\bibitem{PP17}
{\sc A.~Parusi\'nski and L.~P\u{a}unescu}, {\em Arc-wise analytic
  stratification, {W}hitney fibering conjecture and {Z}ariski equisingularity},
  Adv. Math., 309 (2017), pp.~254--305.

\bibitem{PTpreprint}
{\sc F.~Pham and B.~Teissier}, {\em Fractions lipschitziennes d'une algebre
  analytique complexe et saturation de zariski}, Pr\'epublications Ecole
  Polytechnique No. M17.0669 (1969).
  https://hal.archives-ouvertes.fr/hal-00384928.

\bibitem{SS79}
{\sc L.~Siebenmann and D.~Sullivan}, {\em On complexes that are {L}ipschitz
  manifolds}, in Geometric topology ({P}roc. {G}eorgia {T}opology {C}onf.,
  {A}thens, {G}a., 1977), Academic Press, New York-London, 1979, pp.~503--525.

\bibitem{speder75}
{\sc J.~P. Speder}, {\em \'{E}quisingularit\'{e} et conditions de {W}hitney},
  Amer. J. Math., 97 (1975), pp.~571--588.

\bibitem{teissier77}
{\sc B.~Teissier}, {\em The hunting of invariants in the geometry of
  discriminants}, in Real and complex singularities ({P}roc. {N}inth {N}ordic
  {S}ummer {S}chool/{NAVF} {S}ympos. {M}ath., {O}slo, 1976), 1977,
  pp.~565--678.

\bibitem{teissierRabida82}
\leavevmode\vrule height 2pt depth -1.6pt width 23pt, {\em Vari\'et\'es
  polaires. {II}. {M}ultiplicit\'es polaires, sections planes, et conditions de
  {W}hitney}, in Algebraic geometry ({L}a {R}\'abida, 1981), vol.~961 of
  Lecture Notes in Math., Springer, Berlin, 1982, pp.~314--491.

\bibitem{V2005}
{\sc G.~Valette}, {\em Lipschitz triangulations}, Illinois J. Math., 49 (2005),
  pp.~953--979.

\bibitem{whitney34}
{\sc H.~Whitney}, {\em Analytic extensions of differentiable functions defined
  in closed sets}, Trans. Amer. Math. Soc., 36 (1934), pp.~63--89.

\bibitem{whitneyannals65}
\leavevmode\vrule height 2pt depth -1.6pt width 23pt, {\em Tangents to an
  analytic variety}, Ann. of Math. (2), 81 (1965), pp.~496--549.

\bibitem{zariski65-S1}
{\sc O.~Zariski}, {\em Studies in equisingularity. {I}. {E}quivalent
  singularities of plane algebroid curves}, Amer. J. Math., 87 (1965),
  pp.~507--536.

\bibitem{zariski65-S2}
\leavevmode\vrule height 2pt depth -1.6pt width 23pt, {\em Studies in
  equisingularity. {II}. {E}quisingularity in codimension {$1$} (and
  characteristic zero)}, Amer. J. Math., 87 (1965), pp.~972--1006.

\bibitem{zariski75}
\leavevmode\vrule height 2pt depth -1.6pt width 23pt, {\em On equimultiple
  subvarieties of algebroid hypersurfaces}, Proc. Nat. Acad. Sci. U.S.A., 72
  (1975), pp.~1425--1426.

\bibitem{Zariski1979}
\leavevmode\vrule height 2pt depth -1.6pt width 23pt, {\em Foundations of a
  general theory of equisingularity on {$r$}-dimensional algebroid and
  algebraic varieties, of embedding dimension {$r+1$}}, Amer. J. Math., 101
  (1979), pp.~453--514.

\end{thebibliography}
\DIFaddend 

\end{document}